\definecolor{ForestGreen}{rgb}{0.13, 0.55, 0.13}
\theoremstyle{plain} 
\newtheorem{theorem}{Theorem}[section] 
\newtheorem*{theorem*}{Main Result}
\newtheorem{thm*}{Known result}
\newtheorem{corollary}[theorem]{Corollary} 
\newtheorem{lemma}[theorem]{Lemma}
\newtheorem{proposition}[theorem]{Proposition}
\newtheorem{definition}[theorem]{Definition}
\theoremstyle{definition}
\newtheorem{example}[theorem]{Example}
\theoremstyle{remark}
\newtheorem{remark}[theorem]{Remark}
\numberwithin{equation}{section}
\newcommand{\eqlab}[1]{\begin{equation}  \begin{aligned}#1 \end{aligned}\end{equation}} 
\newcommand{\bgs}[1]{\begin{equation*} \begin{aligned}#1\end{aligned}\end{equation*}} 
\newcommand{\syslab}[2] []  {\begin{equation}#1  \left\{\begin{aligned}#2\end{aligned}\right.\end{equation}} 
\newcommand{\sys}[2][]{\begin{equation*}#1  \left\{\begin{aligned}#2\end{aligned}\right.\end{equation*}}
\def\z{{\bf z}}
\newcommand{\R}{\ensuremath{\mathbb{R}}}
\newcommand{\Rn}{\ensuremath{\mathbb{R}^n}}
\newcommand{\N}{\ensuremath{\mathbb{N}}}
\newcommand{\eps}{\ensuremath{\varepsilon}}
\newcommand{\OMega}{\ensuremath{\Omega}}
\newcommand{\Co}{\mathcal C}
\newcommand{\E}{\mathcal E_s}
\newcommand{\En}{\mathcal E}
\newcommand{\Ha}{\mathcal H}
\newcommand{\Op}{\mathcal O}
\newcommand{\W}{\mathcal W}
\DeclareMathOperator{\Per}{Per}
\DeclareMathOperator{\sgn}{sgn}
\DeclareMathOperator{\loc}{loc}
\DeclareMathOperator{\diam}{diam}
\DeclareMathOperator{\dist}{dist}
\DeclareMathOperator{\Ts}{Tail}
\newcommand{\Tss}{\ensuremath{\Ts_s}}
\renewcommand{\le}{\leqslant}
\renewcommand{\leq}{\leqslant}
\renewcommand{\ge}{\geqslant}
\renewcommand{\geq}{\geqslant}
\title[$(s,p)$-harmonic approximation of functions of least $W^{s,1}$-seminorm]{$(s,p)$-harmonic approximation \\ of functions of least $W^{s,1}$-seminorm}
\author[C. Bucur]{Claudia Bucur}
\author[S. Dipierro]{Serena Dipierro}
\author[L. Lombardini]{Luca Lombardini}
\author[J. M. Maz\'{o}n]{Jos\'{e} M. Maz\'{o}n}
\author[E. Valdinoci]{Enrico Valdinoci}
\address{Claudia Bucur \textsuperscript{1}}
\address{Serena Dipierro \textsuperscript{2}}
\address{Luca Lombardini \textsuperscript{2,3}}
\address{Jos\'{e} M. Maz\'{o}n \textsuperscript{4}}
\address{Enrico Valdinoci \textsuperscript{2}}
\address{{\textsuperscript{1}
Universit\`a degli Studi dell'Insubria,
Dipartimento di Scienza e Alta Tecnologia,
and RISM-Riemann International School of Mathematics,
Villa Teoplitz, Via G. B. Vico 46, 21100 Varese, Italy}}
\address{{\textsuperscript{2}
University of Western Australia,
Department of Mathematics and Statistics,
35 Stirling Highway,
Crawley, Perth,
WA6009, Australia}}
\address{{\textsuperscript{3}
Universit\`a degli Studi di Padova, 
Dipartimento di Matematica ``Tullio Levi-Civita'', 
Via Trieste 63, 35121 Padova, Italy}}
\address{{\textsuperscript{4}
Universitat de Val\`encia,
Facultat de Ci\`encies Matem\`atiques,
Dr. Moliner 50,
46100 Burjassot, Val\`encia, Spain}}
\email{claudiadalia.bucur@uninsubria.it}
\email{serena.dipierro@uwa.edu.au}
\email{luca.lombardini@uwa.edu.au}
\email{Jose.M.Mazon@uv.es}
\email{enrico.valdinoci@uwa.edu.au}
\thanks{The first author is member of GNAMPA (INdAM), Italy.
The second and fifth authors are
members of GNAMPA (INdAM), Italy, and AustMS, Australia.
The second author is supported by
the Australian Research Council DECRA DE180100957
``PDEs, free boundaries and applications''. The fourth author is partially supported by the Spanish
MCIU and FEDER, project PGC2018-094775-B-100. The fifth author is supported by
the Australian Laureate Fellowship
FL190100081
``Minimal surfaces, free boundaries and partial differential equations''.}
\subjclass[2010]{35R11, 58E12, 35J60}
\keywords{Nonlinear operators, nonlocal equations, asymptotics}
\begin{document}

\begin{abstract}
We investigate the convergence as~$p\searrow1$ of the minimizers of the $W^{s,p}$-energy for~$s\in(0,1)$
and~$p\in(1,\infty)$ to those of the $W^{s,1}$-energy, both in the pointwise sense and by means of $\Gamma$-convergence. We also
address the convergence of the corresponding Euler-Lagrange equations, and the equivalence between minimizers and weak solutions. 
As ancillary results,  we study some  regularity issues regarding
minimizers of the $W^{s,1}$-energy.
\end{abstract}
\maketitle

\setcounter{tocdepth}{1}
\tableofcontents

\section{Introduction}

The goal of this paper is to study the limit properties of the minimizers of a nonlinear nonlocal problem in dependence of its nonlinear exponent. Roughly speaking, we take into consideration the minimizers of a $W^{s,p}$-Gagliardo seminorm with~$s\in(0,1)$ and~$p>1$ and discuss the limit as~$p\searrow1$. This asymptotic study is important in providing a coherent setting for variational problems for energy functionals that are homogeneous of degree one and not strictly convex.
We try to keep the presentation of the results as self-contained as
possible, so that the paper can be accessible also to a non-specialist reader.

The classical counterpart of this problem is related to isotropic diffusion models restricted on level surfaces as well as to hypersurfaces with zero mean curvature. The corresponding local energy functional is the seminorm in the space of bounded variation functions, whose minimizers are often called ``functions of least gradient'': more specifically, these problems are modeled by the $1$-Laplace operator, and a very fruitful field of investigation consists in understanding the limit of the solutions of $p$-Laplace
equations as~$p\searrow1$, see~\cite{MR2164415}. An evident structural difficulty in this setting is to give an appropriate meaning to the $1$-Laplace operator, or even to the normal vector field~$\frac{\nabla u}{|\nabla u|}$, at points where~$\nabla u$ vanishes.
In these classical problems,
the appropriate substitute for the normal vector field at critical points was
introduced by
Andreu, Ballester, Caselles and Maz\'{o}n in~\cite{MR1814993}
via a suitable vector field~$\mathbf{z}$ with~$|\mathbf{z}|\le1$ and~$\mathbf{z}\cdot\nabla u=|\nabla u|$. 
We refer to~\cite{mazrossleon}
for equivalence results between
functions of least gradient and solutions of $1$-Laplace equations.
See also~\cite{MR2376662}
and the references therein
for several motivations and perspectives related to nonlinear PDEs involving the $1$-Laplacian.
\medskip

The nonlocal correspondent of these classical problems entails additional difficulties, since the role played by the normal vector is taken in this setting by the fractional ratio~$\frac{u(x)-u(y)}{|u(x)-u(y)|}$
and hence the singular set is geometrically more difficult to interpret and describe. As a counterpart, finding a suitable substitute of this ratio that carries over to the singularities is conceptually more difficult than in the classical case, and a first step towards
the understanding of this problem was made in~\cite{toled}
where the nonlocal ratio was replaced by a convenient choice of a measurable function.\medskip

Our objective is to further understand the nonlocal $1$-Laplace equation in view of some convenient limit properties of $p$-Laplace equations as~$p\searrow1$, especially in light of the convergence of the minimizers, of the $\Gamma$-convergence and of the convergence of the weak solutions.

For completeness,  we investigate also the asymptotics as $s \to 1^-$ having  fixed $p=1$, proving convergence of the $W^{s,1}$-energy and of its corresponding minimizers to their local counterparts, i.e. the $BV$ seminorm and respectively,  functions of least gradient. These side results are the content of the Appendix. 
  
  \medskip
  
      To state precisely our results,
we introduce now the formal mathematical setting that we consider in this paper.

We denote by~$\Omega \subset \Rn$ a bounded open set with Lipschitz boundary.
We consider also $s\in(0,1)$ and~$p\in (1,\infty)$. In particular, since our concern is the asymptotic behavior as $p\searrow  1$, we will take $p$ as close to $1$ as needed. 

For any measurable function~$u\colon \Rn \to \R$ and $q\in [1,\infty)$ we define the \emph{nonlocal} $(s,q)$--energy of $u$ in a domain $\Omega$ as
	\[
		\E^q (u) := \frac1{2q}\iint_{Q(\Omega)} \frac{ |u(x)-u(y)|^q}{|x-y|^{n+sq}} \, dx \, dy,
	\]
	where
	$$Q(\Omega):= \R^{2n} \setminus (\Co \Omega)^2.$$
	Notice that one can split $\E^q$ into the contributions occurring inside $\Omega$ and the interactions of $\Omega$ with its complement, precisely
	\eqlab{ \label{split}
		\E^q(u)= \frac1{2q} \int_{\Omega} \int_\Omega \frac{ |u(x)-u(y)|^q}{|x-y|^{n+sq}} \, dx \, dy +  \frac1{q}\int_{\Omega} \int_{\Co \Omega} \frac{ |u(x)-u(y)|^q}{|x-y|^{n+sq}} \, dx \, dy
	.
	} 
In this notation, for the sake of simplicity,
we neglect the dependence on $\Omega$ in the expression of the energy $\E^q$
since the domain $\Omega$ will be fixed throughout the paper---unless otherwise specified.

	We recall  that the fractional $(s,q)$--Gagliardo seminorm  of a measurable function $u\colon\Omega\to\R$ is defined as
	\[
		 \; [u]_{W^{s,q}(\Omega)} :=\left( \int_{\Omega} \int_\Omega \frac{ |u(x)-u(y)|^q}{|x-y|^{n+sq}} \, dx \, dy\right)^{\frac{1}{q}}.
	\] 
We consider the fractional Sobolev space 
	\[ W^{s,q}(\Omega) = \{ u \in L^q(\Omega) \ | \ [u]_{W^{s,q}(\Omega)} < \infty \},
	\]
which is a Banach space  with respect to the norm
	\[
	\Vert u \Vert_{W^{s,q}(\Omega)}:= [u]_{W^{s,q}(\Omega)} + \Vert u \Vert_{L^q(\Omega)}.\] 
	For details on fractional spaces, see for instance~\cite{hitch}.
	\medskip

	We introduce  the functional spaces in which we look for minimizers of the energy $\E^q$, with given Dirichlet data.  As customary in nonlocal problems, the ``boundary'' condition is actually an exterior condition, that is, for the minimizing problem we fix an exterior data $\varphi \colon \Co \Omega \to \R$. We define 
	\eqlab{ \label{dgggg}
		& \W^{s,q}(\Omega):= \left\{ u\colon \Rn \to \R \mbox{ measurable} \; \; \big| \; \;  u |_{\Omega} \in W^{s,q}(\Omega) \right\}\qquad \mbox{ and} \\
		& \W^{s,q}_\varphi(\Omega) := \left\{ u \in \W^{s,q}(\Omega), \; \; \big| \; \; u= \varphi \; \mbox{ a.e. in } \Co \Omega\right\}.  		 
	}
	
	\begin{definition}\label{DEF-1}
We say that $u\in \W^{s,q}(\Omega)$ is an $(s,q)$-minimizer in $\Omega$ if $\E^q(u)<\infty$ and
	\[
		\E^q(u) \leq \E^q(v)
	\]
	for all $v\in \W^{s,q}(\Omega)$ such that $v=u$ almost everywhere in $\Co \Omega$.\end{definition}
	
When $q=1$ we can consider a more general definition of minimizer, that was introduced and studied in \cite{bdlv20}.

\begin{definition}\label{DEF-2}
We say that $u\in \W^{s,1}(\Omega)$ is an $s$-minimal function in $\Omega$ if
	\begin{equation}\label{RENO}
	\iint_{Q(\Omega)}\left[\frac{|u(x)-u(y)|}{|x-y|^{n+s}}-\frac{|v(x)-v(y)|}{|x-y|^{n+s}}\right]dx\,dy\le0,
	\end{equation}
	for all $v\in \W^{s,1}(\Omega)$ such that $v=u$ almost everywhere in $\Co \Omega$.
\end{definition}
We remark that this definition is always well-posed
with no conditions on $u|_{\Co\Omega}$, thanks to the fractional
Hardy-type inequality (see \cite[formula~(17)]{Dyda} - recalled here in \eqref{fracH}).
We also point out that the minimization property in~$\Omega$
induces a minimization in any subdomain~$\Omega'\subset\Omega$
(see e.g. the observation below equation~(1.1)
in~\cite{MR3133422}).

Throughout this paper, it will be useful to consider the ``nonlocal tail'' of a function~$u$.
Namely, as in \cite{teolu},
for  any $q\in[1, \infty)$, one defines
\eqlab{ \label{coda1}
	\Tss^q(u,\Co \Omega; x) :=\int_{\Co \Omega} \frac{|u(y)|^q}{|x-y|^{n+sq}} dy.	
	}
	
	Notice that an $(s,1)$-minimizer is also an $s$-minimal function, and, when $\Tss^1 (u,\Co \Omega; \cdot) \in L^1(\Omega)$, the two notions of minimizer coincide (see \cite[Lemma 2.1]{bdlv20}).

This notation of nonlocal tail also sheds some light on the relation between Definitions~\ref{DEF-1}
and~\ref{DEF-2}. In particular, the assumption~$\E^1(u)<\infty$ is not needed 
	in Definitions~\ref{DEF-2}  since the integrand in~\eqref{RENO} already provides the necessary
	cancellations.
Regarding Definition~\ref{DEF-1},
we stress that the condition $\E^q(u)<\infty$ implies that the exterior datum $u|_{\Co\Omega}$
satisfies an appropriate integrability condition for the tail, according to the following result. 

		\begin{lemma}\label{domain_lem}
Let $q\in[1,1/s)$ and let $u:\R^n\to\R$ be a measurable function. Then,
\[
\E^q(u)<\infty\quad\mbox{if and only if}\quad u\in \W^{s,q}(\Omega)\mbox{ and }\Tss^q (u,\Co \Omega; \cdot) \in L^1(\Omega).
\]
\end{lemma}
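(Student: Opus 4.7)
The plan is to exploit the splitting formula~\eqref{split}, rewriting
\[
\E^q(u)=\tfrac{1}{2q}\,[u]_{W^{s,q}(\Omega)}^q+\tfrac{1}{q}\,B(u),\qquad B(u):=\int_\Omega\int_{\Co\Omega}\frac{|u(x)-u(y)|^q}{|x-y|^{n+sq}}\,dy\,dx,
\]
so that $\E^q(u)<\infty$ is equivalent to the simultaneous finiteness of the interior seminorm $[u]_{W^{s,q}(\Omega)}^q$ and of the mixed interaction $B(u)$. Since $u\in W^{s,q}(\Omega)$ is the conjunction of $u\in L^q(\Omega)$ and $[u]_{W^{s,q}(\Omega)}^q<\infty$, the problem reduces to showing that, once the interior seminorm is assumed finite, $B(u)<\infty$ is equivalent to $u\in L^q(\Omega)$ together with $\Tss^q(u,\Co\Omega;\cdot)\in L^1(\Omega)$.

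I would next use the elementary inequalities
\[
|u(x)-u(y)|^q\le 2^{q-1}(|u(x)|^q+|u(y)|^q),\qquad |u(y)|^q\le 2^{q-1}(|u(x)-u(y)|^q+|u(x)|^q),
\]
valid for $q\ge 1$. Dividing by $|x-y|^{n+sq}$ and integrating over $\Omega\times\Co\Omega$ yields the symmetric two-sided comparison
\[
B(u)\le 2^{q-1}\left(\int_\Omega|u(x)|^q I(x)\,dx+\int_\Omega\Tss^q(u,\Co\Omega;x)\,dx\right),
\]
\[
\int_\Omega\Tss^q(u,\Co\Omega;x)\,dx\le 2^{q-1}\left(B(u)+\int_\Omega|u(x)|^q I(x)\,dx\right),
\]
with the auxiliary kernel $I(x):=\int_{\Co\Omega}|x-y|^{-n-sq}\,dy$. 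Thus $B(u)$ and $\int_\Omega\Tss^q\,dx$ are comparable up to the single correction term $\int_\Omega|u|^q I\,dx$, and the lemma will follow as soon as this extra integral is shown to be finite whenever $u\in W^{s,q}(\Omega)$.

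The hypothesis $q<1/s$ enters precisely here. A direct computation gives $I(x)\le C\,\dist(x,\partial\Omega)^{-sq}$, and the Lipschitz regularity of $\partial\Omega$ combined with $sq<1$ ensures that $\dist(\cdot,\partial\Omega)^{-sq}\in L^1(\Omega)$; to control $\int_\Omega|u|^q I\,dx$ for a general $u\in W^{s,q}(\Omega)$ I would then invoke the fractional Hardy inequality
\[
\int_\Omega\frac{|u(x)|^q}{\dist(x,\partial\Omega)^{sq}}\,dx\le C\,\Vert u\Vert_{W^{s,q}(\Omega)}^q,
\]
which holds for Lipschitz domains in the subcritical regime $sq<1$. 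This closes the $(\Leftarrow)$ implication, and for $(\Rightarrow)$ it only remains to recover $u\in L^q(\Omega)$ from $\E^q(u)<\infty$: I would pick a bounded reference set $\omega\subset\Co\Omega$ at positive distance from $\partial\Omega$, on which $|x-y|^{-n-sq}$ is bounded above and below on $\Omega\times\omega$, use Fubini and $B(u)<\infty$ together with the triangle inequality to deduce $u\in L^q(\omega)$, and finally average $|u(x)|^q\le 2^{q-1}(|u(x)-u(y)|^q+|u(y)|^q)$ in $y\in\omega$ to obtain $\Vert u\Vert_{L^q(\Omega)}^q\le C\bigl(B(u)+\Vert u\Vert_{L^q(\omega)}^q\bigr)<\infty$. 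The main obstacle is the fractional Hardy step; all other manipulations are routine.
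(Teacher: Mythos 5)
Your proof is correct and follows essentially the same route as the paper's: split the energy as in \eqref{split}, compare the cross-term $B(u)$ with $\|\Tss^q(u,\Co\Omega;\cdot)\|_{L^1(\Omega)}$ up to the correction $\int_\Omega |u|^q\dist(\cdot,\partial\Omega)^{-sq}$ controlled by the fractional Hardy inequality (valid since $sq<1$ and $\partial\Omega$ is Lipschitz), and recover $u\in L^q(\Omega)$ by a Poincar\'e-type averaging against a reference set. The only cosmetic difference is that the paper packages your two pointwise comparisons through the truncations $\bar u=u\chi_\Omega$ and $u_0=u\chi_{\Co\Omega}$ together with the power-$q$ triangle inequality, and cites \cite[Lemma D.1.2]{tesilu} for the $L^q(\Omega)$ recovery where you give a direct argument.
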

Concerning the hypothesis in Lemma~\ref{domain_lem},
we point out that it is not necessary to suppose a-priori
that $u\in L^1_{\loc}(\Rn)$ in order to define the functional $\E^q$ (which might as well be infinite).
However, it turns out indeed that a measurable function with finite $(s,q)$-energy
belongs $L^1_{\loc}(\Rn)$, see also Remark \ref{aggiunta1}.

\medskip

We now discuss the equation arising from the minimization problems
presented in Definitions~\ref{DEF-1} and~\ref{DEF-2}.
For $p>1$, the Euler-Lagrange equation of the $(s,p)$--energy gives rise to the fractional $p$-Laplacian. 
We recall that, formally, the {\it fractional $p$-Laplace operator} is defined as
$$(-\Delta)_p^s u(x):=  P.V. \int_{\R^n} \frac{\vert u(x) - u(y) \vert^{p-2}(u(x) - u(y))}{\vert x - y \vert^{n+sp}} dy.$$

\begin{definition} \label{D:p}
Let~$\varphi \colon \Co \Omega \to \R$ be such that
\begin{equation}\label{ugv7ryfd}
\Tss^p(\varphi,\Co \Omega; \cdot) \in L^1(\Omega),\end{equation}
	 for some $p\in (1,1/s)$.
We say that
a measurable function $u\colon \Rn \to \R$
is a weak solution to the problem  
\syslab[]{ \label{Dirichlet1} 
 & (-\Delta)_p^s u =0 & &\hbox{ in } \; \Omega,
 \\ 
 & u = \varphi & &\hbox{ in }\;  \Co \Omega,
 }
if $u \in \W_\varphi^{s,p}(\Omega)$ and
\eqlab {\label{huf2}
	\iint_{Q(\Omega)}  \frac{1}{\vert x - y \vert^{n+sp}} \vert u(x) - u(y) \vert^{p-2}(u(x) - u(y))(w(x) - w(y)) dx\, dy =0,}
for every $w \in  \W^{s,p}_0(\Omega)$. 
\end{definition}

Notice that, under assumption~\eqref{ugv7ryfd}, Lemma~\ref{domain_lem} ensures that $\En_s^p(u)<\infty$, hence \eqref{huf2} is well-posed. Indeed, since $sp<1$, by 
H\"{o}lder's inequality and the fractional Hardy-type inequality \eqref{fracH} we have
\eqlab{\label{Vivien}
&\left|\iint_{Q(\Omega)}  \frac{1}{\vert x - y \vert^{n+sp}} \vert u(x) - u(y) \vert^{p-2}(u(x) - u(y))(w(x) - w(y)) dx\, dy\right|\\
&\qquad\leq (\En_s^p(u))^\frac{p-1}{p}(\En_s^p(w))^\frac{1}{p}\leq C(n,s,p,\Omega)(\En_s^p(u))^\frac{p-1}{p}\|w\|_{W^{s,p}(\Omega)}.
}
We also remark that, by the density of $C^\infty_c(\Omega)$ in $W^{s,p}(\Omega)$ 
and \eqref{Vivien}, we can consider as test functions in \eqref{huf2}
just $w\in C^\infty_c(\Omega)$.\medskip

When~$p=1$, 
the term~$\vert u(x) - u(y) \vert^{p-2}(u(x) - u(y))$ in \eqref{huf2}
reduces to~$\frac{
u(x) - u(y)}{\vert u(x) - u(y) \vert}$ and
it is evidently problematic to give a rigorous meaning to this ratio.
For this, recalling~\cite{toled},
we give the next definition of weak solution for the $(s,1)$-Laplacian.

\begin{definition}\label{s1lapdel}
	{
		We say that a measurable function $u:\R^n\to\R$ is a weak solution to the problem
\begin{equation}\label{armoni} (-\Delta)_1^s u = 0 \quad \hbox{in} \ \Omega,
\end{equation}
if there exists $\z \in L^\infty(Q(\Omega))$, with $\Vert \z \Vert_{L^\infty(Q(\Omega))} \leq 1$,
$\z(x,y)=-\z(y,x)$,
\begin{equation}\label{ee1}\iint_{Q(\Omega)} \frac{\z(x,y)}{ \vert x - y \vert^{n+s}} (w(x) - w(y)) dx\, dy =0 \quad \hbox{for all} \ w \in \W^{s,1}_0(\Omega),\end{equation}
and
\begin{equation}\label{ee2}\z(x,y) \in \sgn(u(x) - u(y)) \quad \hbox{for almost all} \ (x, y) \in Q(\Omega).
\end{equation}
}
If, in addition,~$u \in \W_\varphi^{s,1}(\Omega)$, we say that~$u$ is a weak solution of the problem
	\syslab[]{\label{Dirichlet1ss}
 & (-\Delta)_1^s u =0 & &\hbox{ in } \; \Omega,
 \\ 
 & u = \varphi & &\hbox{ in }\;  \Co \Omega.
 }
\end{definition}

Concerning the notation used in~\eqref{ee2},
we recall that $\sgn (x)$ denotes a generalized sign function, satisfying 	\[ 
\sgn (x) \, x = |x| \qquad{\mbox{and}}\qquad \sgn(0)= [-1,1].\]
In this setting, equation~\eqref{ee2} translates into
\[ \z(x,y) (u(x)-u(y)) =|u(x)-u(y)|.\]  
In a sense, Definition~\ref{s1lapdel} (as developed in~\cite{toled} for the fractional case), can be seen as a natural counterpart of the setting
presented in~\cite{MR1814993} and~\cite{mazrossleon} for the $1$-Laplace equation.
	\medskip
	
We now focus on the main results of this paper, namely
we study the asymptotics as $p\searrow 1$ of nonlocal
$(s,p)$--problems to the corresponding $(s,1)$--problems. This aim is threefold, and is articulated in:
	\begin{enumerate}
	\item the convergence of minimizers of the $(s,p)$--energy,
	\item the $\Gamma$-convergence of the $(s,p)$--energy,
	\item the convergence of weak solutions of the $(s,p)$--Laplacian.
	\end{enumerate}
We now describe in further details the principal results
that we give here, according to each of these three lines of research.	
	\medskip
	
	The main result related to point (1) goes as follows:
	
	\begin{theorem}\label{theorem}
Let $p_k \searrow 1$ as $k\to\infty$, $\varphi_k \colon \Co \Omega \to \R$ be such that 
\eqlab{ \label{uui3}
   \sup_{k\in\N} \Tss^{p_k}(\varphi_k,\Co \Omega; \cdot) \in {L^1(\Omega)} , \qquad  \qquad		\varphi_k \xrightarrow[k\to \infty]{} \varphi  \mbox{ a.e. in } \Co \Omega,
	}
	and $u_{p_k} \in \W^{s,p_k}_{\varphi_k}(\Omega)$  be a sequence of $(s,p_k)$-minimizers. 
	
	Then, there exist a subsequence $p_{k_j} \searrow 1$ and $u_1 \in \W^{s,1}_\varphi(\Omega)$ such that 
	\[
		u_{p_{k_j}} \xrightarrow[j \to \infty]{} u_1 \qquad \mbox{ in } \;  L^1(\Omega)\quad\mbox{and a.e. in }\R^n.
	\] 
Furthermore, $\Tss^1(\varphi,\Co\Omega;\cdot)\in L^1(\Omega)$ and $u_1$ is an $(s,1)$-minimizer.
\end{theorem}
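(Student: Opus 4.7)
Testing the minimality of $u_{p_k}$ against the competitor $\varphi_k\chi_{\Co\Omega}$ (extended by zero in $\Omega$), hypothesis \eqref{uui3} yields $p_k\E^{p_k}(u_{p_k})\le M$ with $M$ independent of $k$. A H\"older argument on the bounded set $\Omega\times\Omega$---writing $|x-y|^{-(n+t)}$ as $|x-y|^{-(n+sp_k)/p_k}\cdot|x-y|^{(n+sp_k)/p_k-n-t}$ and using exponent $p_k$---converts this into a uniform bound on $[u_{p_k}]_{W^{t,1}(\Omega)}$ for any fixed $t\in(0,s)$ once $k$ is large enough, since the conjugate-exponent integrand becomes bounded on $\Omega\times\Omega$. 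A uniform $L^1(\Omega)$-bound follows via a fractional Poincar\'e-type inequality exploiting the exterior data. Fractional Rellich--Kondrachov compactness then produces a subsequence $u_{p_{k_j}}$ converging in $L^1(\Omega)$ and, up to a further extraction, a.e.\ on $\Omega$ to some $u_1$. Since $u_{p_{k_j}}=\varphi_{k_j}\to\varphi$ a.e.\ on $\Co\Omega$ by \eqref{uui3}, the pointwise convergence extends to all of $\R^n$ and $u_1=\varphi$ a.e.\ on $\Co\Omega$.

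\textbf{Identification of $u_1$ and of the tail.} Since $a_k\to a\ge 0$ and $p_k\to 1$ imply $a_k^{p_k}\to a$, the integrand $\tfrac{|u_{p_{k_j}}(x)-u_{p_{k_j}}(y)|^{p_{k_j}}}{|x-y|^{n+sp_{k_j}}}$ converges a.e.\ on $Q(\Omega)$ to $\tfrac{|u_1(x)-u_1(y)|}{|x-y|^{n+s}}$. Fatou's lemma then yields
\[
\E^1(u_1)\le\liminf_{j\to\infty} p_{k_j}\E^{p_{k_j}}(u_{p_{k_j}})\le M,
\]
so $u_1\in\W^{s,1}_\varphi(\Omega)$. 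Applying Fatou analogously to $\Tss^{p_{k_j}}(\varphi_{k_j},\Co\Omega;\cdot)$ together with \eqref{uui3} gives $\Tss^1(\varphi,\Co\Omega;\cdot)\in L^1(\Omega)$.

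\textbf{Minimality.} To show $\E^1(u_1)\le\E^1(v)$ for $v\in\W^{s,1}_\varphi(\Omega)$, it suffices---by density (in an energy-compatible sense) of the class $\{w+\varphi\chi_{\Co\Omega}:w\in C_c^\infty(\Omega)\}$ in $\W^{s,1}_\varphi(\Omega)$---to consider $v=w+\varphi\chi_{\Co\Omega}$ with $w\in C_c^\infty(\Omega)$. Setting $v_k:=w+\varphi_k\chi_{\Co\Omega}$, the compact support of $w$ and \eqref{uui3} ensure $v_k\in\W^{s,p_k}_{\varphi_k}(\Omega)$ with $\E^{p_k}(v_k)<\infty$. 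By the minimality of $u_{p_{k_j}}$,
\[
p_{k_j}\E^{p_{k_j}}(u_{p_{k_j}})\le p_{k_j}\E^{p_{k_j}}(v_{k_j});
\]
taking $\liminf$ on the left (as in the previous paragraph) and $\lim$ on the right (via dominated convergence) gives $\E^1(u_1)\le\E^1(v)$.

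\textbf{Main obstacle.} The delicate ingredient is the convergence $p_k\E^{p_k}(v_k)\to\E^1(v)$ on the right-hand side. On $\Omega\times\Omega$, the Lipschitz estimate $|w(x)-w(y)|\le L|x-y|$ gives
$\tfrac{|w(x)-w(y)|^{p_k}}{|x-y|^{n+sp_k}}\le L^{p_k}|x-y|^{p_k(1-s)-n}$,
which admits a $k$-uniform integrable majorant by treating $\{|x-y|\le 1\}$ and $\{|x-y|>1\}$ separately (for $p_k$ close to $1$). On $\Omega\times\Co\Omega$, using $|v_k(x)-v_k(y)|^{p_k}\le 2^{p_k-1}(|w(x)|^{p_k}+|\varphi_k(y)|^{p_k})$, the $w$-contribution is handled by $\dist(\operatorname{supp}w,\Co\Omega)>0$, while the $\varphi_k$-contribution is dominated uniformly in $k$ by the $L^1(\Omega)$ function $\sup_k\Tss^{p_k}(\varphi_k,\Co\Omega;\cdot)$ coming from \eqref{uui3}. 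The density reduction used at the start of the minimality step is the remaining subtle point, and is addressed by a smooth approximation argument within $\W^{s,1}_\varphi(\Omega)$ that preserves the exterior datum.
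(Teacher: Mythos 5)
Your proposal follows essentially the same route as the paper's proof: you test minimality against the trivial competitor to get a uniform energy bound, use a H\"older-type argument (the paper's Lemma~\ref{lemmatwo} and Theorem~\ref{quattro}) to pass to a $p$-uniform $W^{\sigma,1}(\Omega)$ bound for $\sigma<s$, invoke the fractional Poincar\'e inequality and Rellich compactness to extract a limit $u_1$, identify $u_1$ and the tail via Fatou, and then prove minimality by first testing against competitors of the form $w+\varphi\chi_{\Co\Omega}$ with $w\in C^\infty_c(\Omega)$ (matched with $v_k := w+\varphi_k\chi_{\Co\Omega}$ at level $p_k$) and dominated convergence for the upper bound (the paper's Lemma~\ref{lemmasof1}), followed by a density reduction for general competitors (the paper's Lemma~\ref{lemmasof3}). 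The decomposition, the key lemmas, and even the dominated-convergence argument on $\Omega\times\Co\Omega$ using $\sup_k\Tss^{p_k}(\varphi_k,\Co\Omega;\cdot)\in L^1(\Omega)$ are the same as in the paper.
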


Similar results related to Theorem~\ref{theorem}
have been also discussed in~\cite[Section 5]{BN18}.

The proof of Theorem~\ref{theorem} relies on ``direct methods'', based on compactness properties and uniform bounds of minimizers.
An alternative approach to this type of questions can be taken in light of the
$\Gamma$-convergence theory, leading to the research direction presented in point~(2).
To follow this line of investigation, we define, for any $q\in [1,\infty)$,
	\begin{equation}\label{kg-98uhn-jmnMSA-1}
		\mathcal X^{q}(\Omega):= \left\{ u \in L^1_{\loc}(\Rn) \; \bigg| \; \iint_{Q(\Omega)} \frac{ |u(x)-u(y)|^{{q}}}{|x-y|^{n+s{q}}} dx \, dy <\infty\right\},   
\end{equation}
	and we introduce the (extended) functional on the space $L^1_{\loc}(\Rn)$ defined by
\begin{equation}\label{kg-98uhn-jmnMSA-2}
\tilde\En_s^{q} (u):=\begin{cases}
		\displaystyle\frac{1}{2q}\iint_{Q(\Omega)}\displaystyle\frac{ |u(x)-u(y)|^{{q}}}{|x-y|^{n+s{q}}} dx \, dy  & \mbox{ if } \; u \in \mathcal X^{q} (\Omega),
		\\
		 +\infty &\mbox{ if } \; u \in L^1_{\loc}(\Rn) \setminus \mathcal X^{q} (\Omega).
		\end{cases}\end{equation}
The main result related to point (2) is the following
$\Gamma$-convergence result in the $L^1_{\loc}(\Rn)$-topology.

	\begin{theorem}\label{gammy}
	We have that
		\bgs{
		{\Gamma\mbox{-}\lim_{p \searrow 1} }\tilde \En_s^{p} = \tilde\En_s^1,
	}
in the $L^1_{\loc}(\Rn)$-topology.
	\end{theorem}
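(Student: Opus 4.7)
For the $\Gamma$-liminf inequality, let $u_p \to u$ in $L^1_{\loc}(\R^n)$ as $p\searrow 1$; I may assume $\ell:=\liminf_{p\searrow 1}\tilde\En_s^p(u_p)<\infty$ (otherwise the inequality is trivial), pass to a subsequence $p_k$ realizing $\ell$, and then to a further subsequence along which $u_{p_k}\to u$ a.e. in $\R^n$. The elementary fact that $a_k^{p_k}\to a$ whenever $a_k\to a\geq 0$ and $p_k\to 1$ (treating separately the cases $a>0$ and $a=0$) gives the pointwise convergence
\[
\frac{|u_{p_k}(x)-u_{p_k}(y)|^{p_k}}{|x-y|^{n+sp_k}} \longrightarrow \frac{|u(x)-u(y)|}{|x-y|^{n+s}}
\]
for a.e.~$(x,y)\in Q(\Omega)$, and Fatou's lemma together with $\frac{1}{2p_k}\to\frac12$ delivers $\tilde\En_s^1(u)\leq \ell$, as required.

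For the $\Gamma$-limsup, I need to produce a recovery sequence for every $u\in L^1_{\loc}(\R^n)$. The case $\tilde\En_s^1(u)=+\infty$ is trivial, so assume $u\in\mathcal X^1(\Omega)$. I would first handle $u\in C_c^\infty(\R^n)$ by taking the constant sequence $u_p:=u$: the Lipschitz bound $|u(x)-u(y)|\leq C\min(1,|x-y|)$, combined with the splitting of $Q(\Omega)$ into $\{|x-y|\leq 1\}$ and $\{|x-y|>1\}$, furnishes a dominating function for $\tfrac{|u(x)-u(y)|^p}{|x-y|^{n+sp}}$ uniform for $p$ in a fixed right-neighborhood $[1,p_0]$ of $1$ with $p_0<1/s$. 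Dominated convergence then yields $\tilde\En_s^p(u)\to\tilde\En_s^1(u)$.

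For general $u\in\mathcal X^1(\Omega)$ I would approximate in $L^1_{\loc}(\R^n)$ by $u_n\in C_c^\infty(\R^n)$ with $\tilde\En_s^1(u_n)\to\tilde\En_s^1(u)$, in three steps: (i) magnitude truncation $u^M:=(-M)\vee u\wedge M$, whose energy converges to $\tilde\En_s^1(u)$ by dominated convergence, since $|u^M(x)-u^M(y)|\leq|u(x)-u(y)|$ and the integrand converges pointwise; (ii) support truncation $u^{M,R}:=u^M\eta_R$ for a cutoff $\eta_R$ identically $1$ on a neighborhood of $\Omega$ and supported in $B_{2R}$, where the tail integrability $\Tss^1(u,\Co\Omega;\cdot)\in L^1(\Omega)$ granted by Lemma~\ref{domain_lem} yields the pointwise bound
\[
\frac{|u^{M,R}(x)-u^{M,R}(y)|}{|x-y|^{n+s}}\;\leq\;\frac{|u^M(x)-u^M(y)|}{|x-y|^{n+s}}+\frac{|u^M(y)|\,(1-\eta_R(y))}{|x-y|^{n+s}},
\]
with both terms integrable on $Q(\Omega)$, so dominated convergence handles the limit $R\to\infty$; (iii) mollification $u^{M,R}*\rho_\epsilon\in C_c^\infty(\R^n)$, which converges to $u^{M,R}$ in the $\tilde\En_s^1$-energy as $\epsilon\to 0$. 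An Attouch-type diagonal extraction then combines these three approximations with the smooth-case recovery of the preceding paragraph to produce a sequence $u_p\to u$ in $L^1_{\loc}(\R^n)$ with $\limsup_{p\searrow 1}\tilde\En_s^p(u_p)\leq\tilde\En_s^1(u)$.

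The main technical obstacle is step (ii) of the density approximation: because $\tilde\En_s^1$ couples $\Omega$ to $\Co\Omega$ through the nonlocal kernel, truncating the support outside $\Omega$ perturbs the energy in a way that must be quantified using the integrable tail---this is exactly where the nonlocal nature of the problem forces extra care compared with the local counterpart. All other ingredients, namely the pointwise passage in the liminf and the DCT applied to smooth approximants, are routine once the dominating functions are explicitly written down.
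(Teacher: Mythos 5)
Your $\Gamma$-liminf argument matches the paper's (Lemma~\ref{liminf}: pass to an a.e.\ convergent subsequence and apply Fatou) and is correct. For the $\Gamma$-limsup you take a genuinely different route: the paper (after the same magnitude truncation to $u^M$) approximates \emph{only inside} $\Omega$ via density of $C^\infty_c(\Omega)$ in $W^{s,1}(\Omega)$ (Lemma~\ref{lemmasof3}), extending the approximants by $u^M$ on $\Co\Omega$, so the exterior datum is never touched and the change in $\En^1$ is controlled by the fractional Hardy inequality applied to a function supported in $\Omega$. You instead aim for global $C^\infty_c(\R^n)$ approximants built by magnitude truncation, cutoff, and mollification.

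The gap is your step~(iii). You assert that $\tilde\En_s^1(u^{M,R}*\rho_\epsilon)\to\tilde\En_s^1(u^{M,R})$ as $\epsilon\to0$, but this does not follow from any standard mollification lemma: $u^{M,R}$ is only known to lie in $W^{s,1}(\Omega)$ --- not in $W^{s,1}$ of any neighborhood of $\overline\Omega$ --- and $u^{M,R}|_{\Co\Omega}$ is an arbitrary bounded, compactly supported measurable function. Mollification mixes interior and exterior values in a shell of width $\epsilon$ around $\partial\Omega$, so on that shell $(u^{M,R}*\rho_\epsilon)|_\Omega$ differs from $(u^{M,R}|_\Omega)*\rho_\epsilon$, and neither $[u^{M,R}*\rho_\epsilon]_{W^{s,1}(\Omega)}$ nor the cross term $\int_\Omega\int_{\Co\Omega}$ is controlled for free. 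The claim can in fact be rescued using $|u^{M,R}|\le M$: the shell contributions are of order $\epsilon^{1-s}$, obtained by combining the Lipschitz bound $|\nabla(u^{M,R}*\rho_\epsilon)|\le CM/\epsilon$, the fact that the $\epsilon$-shell around $\partial\Omega$ has measure of order $\epsilon$, a translation argument on the $\epsilon$-interior of $\Omega$ giving $[u^{M,R}*\rho_\epsilon]_{W^{s,1}(\{\dist(\cdot,\partial\Omega)>\epsilon\}\cap\Omega)}\le[u^{M,R}]_{W^{s,1}(\Omega)}$, and a double-limit argument for $\int_\Omega\int_{\Co\Omega}$. But this is a nontrivial piece of boundary-layer analysis that your proposal neither carries out nor flags, and it is precisely the obstruction that the paper's interior $C^\infty_c(\Omega)$-approximation (which never modifies the function across $\partial\Omega$) is designed to sidestep.
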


As a variant of Theorem~\ref{gammy},
we also discuss the $\Gamma$-convergence theory with fixed exterior conditions
in the $L^1(\Omega)$ topology.
For this, given $\varphi \colon \Co \Omega\to \R$ we define
	\eqlab{ \label{roba}
		\mathcal X_\varphi^{q}(\Omega):= \left\{ u \in L^1(\Omega) \; \bigg| \; \iint_{Q(\Omega)} \frac{ |u(x)-u(y)|^{{q}}}{|x-y|^{n+s{q}}} dx \, dy <\infty, \; \; u=\varphi \mbox{ in } \Co \Omega \right\}.
		} 
	We introduce the (extended) functionals on $L^1(\Omega)$ given by
	\sys[\tilde\En_{s,\varphi}^{q} (u):=]{
		&\frac{1}{2q} \iint_{Q(\Omega)} \frac{ |u(x)-u(y)|^{{q}}}{|x-y|^{n+s{q}}} dx \, dy  && \mbox{ if } \; u \in \mathcal X_\varphi^{q} (\Omega),
		\\
		& +\infty &&\mbox{ if } \; u \in L^1(\Omega)\setminus \mathcal X_\varphi^{q} (\Omega).
		} 

		We recall that if  $q\in[1,1/s)$ and $u\in 	\mathcal X_\varphi^{q}(\Omega)$, then according to Lemma~\ref{domain_lem}, $u\in W^{s,q}(\Omega)$ and $\Tss^q (u,\Co \Omega; \cdot) \in L^1(\Omega)$. In this setting, we have the following result:
		
	\begin{theorem}	\label{fixeddata}
Let $\varphi \colon \Co \Omega \to \R$ be such that
	\eqlab{ \label{pup1}
	\limsup_{p\searrow 1 } \|\Tss^{p} (\varphi,\Co \Omega; \cdot)  
\|_{L^1(\Omega)} < \infty.
	}
	Then
		\[
			\Gamma-\lim_{p \searrow 1} \tilde \En_{s,\varphi}^{p}=\tilde \En_{s,\varphi}^1,
		\]
		in the $L^1(\Omega)$-topology.
			\end{theorem}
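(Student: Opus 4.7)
The plan is to verify the $\liminf$ and limsup inequalities of $\Gamma$-convergence in the $L^1(\Omega)$ topology, adapting the strategy behind Theorem~\ref{gammy} to the constrained setting in which every admissible function is forced to agree with $\varphi$ on $\Co\Omega$. For the $\liminf$, I would take any sequence $u_{p_k}\to u$ in $L^1(\Omega)$ with $\liminf_k \tilde\En_{s,\varphi}^{p_k}(u_{p_k})<\infty$ (otherwise nothing is to prove), pass to a subsequence realizing the $\liminf$ along which $u_{p_k}\to u$ a.e.\ on $\Omega$ and hence on $\R^n$ (since every $u_{p_k}$ equals $\varphi$ on $\Co\Omega$), and apply Fatou's lemma on $Q(\Omega)$. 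The integrand $\frac{1}{2p_k}\frac{|u_{p_k}(x)-u_{p_k}(y)|^{p_k}}{|x-y|^{n+sp_k}}$ converges pointwise a.e.\ to $\frac{1}{2}\frac{|u(x)-u(y)|}{|x-y|^{n+s}}$ thanks to the elementary facts $a_k^{p_k}\to a$ for $a_k\to a\ge 0$ and $|x-y|^{n+sp_k}\to|x-y|^{n+s}$. The finiteness of the $\liminf$ then forces $u\in\mathcal X_\varphi^1(\Omega)$, and Fatou delivers the desired bound.

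For the limsup, the ideal choice $u_{p_k}\equiv u$ fails in general, because $u\in W^{s,1}(\Omega)$ does not guarantee $u\in W^{s,p}(\Omega)$ for $p>1$, so $u$ need not be admissible for $\tilde\En_{s,\varphi}^{p_k}$. I would therefore proceed in two stages. First, approximate $u$ by a family $u^{(j)}$ that coincides with $\varphi$ on $\Co\Omega$, is bounded and Lipschitz on $\Omega$, converges to $u$ in $L^1(\Omega)$, and satisfies $\tilde\En_{s,\varphi}^1(u^{(j)})\to \tilde\En_{s,\varphi}^1(u)$; such $u^{(j)}$ can be constructed by truncating at height $j$, mollifying on an interior subdomain $\Omega_{-1/j}$, and smoothly matching $\varphi$ across a boundary layer via a cut-off against an extension of $\varphi$. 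Second, for each fixed $j$ the Lipschitz bound $|u^{(j)}(x)-u^{(j)}(y)|\lesssim|x-y|$ controls the near-diagonal integrand uniformly in $p\in(1,1/s)$, the $L^\infty$ bound handles the far-field piece, and \eqref{pup1} combined with $|u^{(j)}(x)-\varphi(y)|^p\le 2^{p-1}(\|u^{(j)}\|_\infty^p+|\varphi(y)|^p)$ controls the interaction with the exterior datum. Dominated convergence then yields $\tilde\En_{s,\varphi}^{p_k}(u^{(j)})\to\tilde\En_{s,\varphi}^1(u^{(j)})$ as $k\to\infty$, and a standard diagonal argument extracts the recovery sequence $u^{(j(k))}$.

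The main obstacle is the construction of the approximants $u^{(j)}$ in the first stage. Pure mollification spreads mass outside $\Omega$ and therefore violates the constraint $u^{(j)}=\varphi$ on $\Co\Omega$, while naive gluing between a regularized $u$ and $\varphi$ can create boundary-layer discontinuities whose nonlocal interaction with the exterior is not obviously negligible. The construction must reconcile interior regularization with the prescribed exterior datum across a thin layer near $\partial\Omega$ while ensuring that the additional $\tilde\En_{s,\varphi}^1$-energy generated there vanishes as $j\to\infty$. This hinges on the tail integrability $\Tss^1(\varphi,\Co\Omega;\cdot)\in L^1(\Omega)$, which is a consequence of $u\in\mathcal X_\varphi^1(\Omega)$ through Lemma~\ref{domain_lem}, and which ensures that the layer's cross-interactions with $\varphi$ are uniformly small. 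Once the approximation is in place, the remaining estimates reduce to the dominated-convergence arguments already outlined.
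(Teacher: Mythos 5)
Your liminf half and the two-stage recovery structure (approximate in $W^{s,1}$, then apply a Lemma~\ref{lemmasof1}-type dominated convergence, then diagonalize) match the paper's scheme, and the way you invoke~\eqref{pup1} to dominate the interaction with $\varphi$ is correct. The gap is in the first stage, in the construction of the approximants $u^{(j)}$, and you half-diagnose it yourself. You insist on ``smoothly matching $\varphi$ across a boundary layer via a cut-off against an extension of $\varphi$''; but $\varphi$ is an arbitrary measurable exterior datum, known only through the tail condition~\eqref{pup1}, and has no trace, no extension, and no a priori regularity near $\partial\Omega$, so no such matching construction is available and the ``boundary-layer discontinuities whose nonlocal interaction with the exterior is not obviously negligible'' that you worry about cannot be engineered away.

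The paper's Lemma~\ref{lemmasof3} resolves this by abandoning the matching idea entirely: take $\psi_j\in C^\infty_c(\Omega)$ with $\psi_j\to u$ in $W^{s,1}(\Omega)$ (plain density), extend by $\varphi$ on $\Co\Omega$, and \emph{accept} the jump at $\partial\Omega$. The resulting approximants vanish near $\partial\Omega$ from inside, whereas $u$ does not, but this is harmless because one only needs the \emph{difference} $\psi_j-u$, which lies in $\W^{s,1}_0(\Omega)$, to have small cross-interaction; and that is exactly what the fractional Hardy inequality~\eqref{fracH} gives,
\[
\int_\Omega\int_{\Co\Omega}\frac{|\psi_j(x)-u(x)|}{|x-y|^{n+s}}\,dy\,dx\le C\,\|\psi_j-u\|_{W^{s,1}(\Omega)}\longrightarrow 0 .
\]
So the tool that closes the argument is fractional Hardy applied to $\psi_j-u$, not, as you write, the tail integrability $\Tss^1(\varphi,\Co\Omega;\cdot)\in L^1(\Omega)$; the latter only serves to make $\tilde\En^1_{s,\varphi}(u)$ finite in the first place. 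Once you replace your truncation/mollification/gluing construction with this density-plus-Hardy argument, the rest of your proof (the $\liminf$ by Fatou's lemma, the dominated-convergence step using~\eqref{pup1} and the Lipschitz/$L^\infty$ bounds of the approximant, and the diagonal extraction) is essentially the paper's proof.
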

			
			As a side observation, note that
			condition~\eqref{pup1} cannot be dropped, as detailed in the counter-example
provided in Remark~\ref{fgr}.

Concerning assumption~\eqref{pup1}, it is also interesting to point out
	several equivalent formulations of such a hypothesis, according to the following observation.
\begin{lemma}\label{EQYU:FOR}
	Let $\varphi:\Co\Omega\to\R$ be a measurable function. The following are equivalent:
	\begin{enumerate}[label=(\roman*)]
		\item there exists $q>1$ such that
		\[
		\sup_{p\in(1,q)} \Tss^p(\varphi,\Co \Omega; \cdot) \in L^1(\Omega).
		\]
		\item There exists $q>1$ such that
		\[
		\sup_{p\in(1,q)} \|\Tss^p(\varphi,\Co \Omega; \cdot)\|_{L^1(\Omega)}<\infty.
		\]
		\item There exists $q>1$ such that $\Tss^1(\varphi,\Co \Omega; \cdot) \in L^1(\Omega)$ and $\Tss^q(\varphi,\Co \Omega; \cdot) \in L^1(\Omega)$.
	\end{enumerate}
\end{lemma}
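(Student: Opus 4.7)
The plan is to close the cycle $(i)\Rightarrow(ii)\Rightarrow(iii)\Rightarrow(i)$, with the heart of the argument being the pointwise interpolation bound
\[
\Tss^p(\varphi,\Co\Omega;x)\,\le\,\Tss^1(\varphi,\Co\Omega;x)+\Tss^q(\varphi,\Co\Omega;x),
\qquad x\in\Omega,\ p\in[1,q].
\]
I would obtain this by factoring the integrand as $\frac{|\varphi(y)|^p}{|x-y|^{n+sp}}=\frac{1}{|x-y|^n}\,h(y)^p$ with $h(y):=|\varphi(y)|/|x-y|^s$, and observing that for each fixed $h\ge 0$ the map $t\mapsto h^t$ is monotone on $[1,q]$, so $h^p\le \max\{h,h^q\}\le h+h^q$. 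Integrating against $|x-y|^{-n}dy$ over $\Co\Omega$ then gives the claimed inequality.

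With this estimate in hand, $(iii)\Rightarrow(i)$ is immediate, since
\[
\sup_{p\in(1,q)}\Tss^p(\varphi,\Co\Omega;\cdot)\,\le\,\Tss^1(\varphi,\Co\Omega;\cdot)+\Tss^q(\varphi,\Co\Omega;\cdot)\,\in\,L^1(\Omega).
\]
Measurability of the pointwise sup in $x$ follows from continuity of $p\mapsto\Tss^p(\varphi,\Co\Omega;x)$, a quick dominated-convergence argument using the very same pointwise bound, which allows the sup over $(1,q)$ to be replaced by the countable sup over $(1,q)\cap\mathbb{Q}$. The step $(i)\Rightarrow(ii)$ is trivial: integrating $\Tss^p\le\sup_{p'\in(1,q)}\Tss^{p'}$ over $\Omega$ yields a uniform $L^1$-bound.

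For the remaining implication $(ii)\Rightarrow(iii)$, I would first pick any intermediate $q'\in(1,q)$; the hypothesis of $(ii)$ gives $\|\Tss^{q'}\|_{L^1(\Omega)}\le \sup_{p\in(1,q)}\|\Tss^p\|_{L^1(\Omega)}<\infty$ directly. For the $L^1$-integrability of $\Tss^1$, I use the pointwise convergence $\frac{|\varphi(y)|^p}{|x-y|^{n+sp}}\to \frac{|\varphi(y)|}{|x-y|^{n+s}}$ as $p\searrow 1$ and two successive applications of Fatou's lemma (first in $y$, to obtain $\Tss^1(x)\le\liminf_{p\searrow 1}\Tss^p(x)$ for a.e.\ $x\in\Omega$, then in $x$) to deduce
\[
\|\Tss^1(\varphi,\Co\Omega;\cdot)\|_{L^1(\Omega)}\,\le\,\liminf_{p\searrow 1}\|\Tss^p(\varphi,\Co\Omega;\cdot)\|_{L^1(\Omega)}\,\le\,\sup_{p\in(1,q)}\|\Tss^p\|_{L^1(\Omega)}\,<\,\infty,
\]
so $(iii)$ holds with $q'$ in place of $q$. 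No step is conceptually deep; the main bookkeeping point is that the exponent $q>1$ need not be the same across the three statements, so in $(ii)\Rightarrow(iii)$ we are free to shrink $q$ to some $q'\in(1,q)$.
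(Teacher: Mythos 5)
Your proof is correct, and the key step is essentially the same as the paper's, with two small but worthwhile differences. For $(iii)\Rightarrow(i)$, the paper writes $p=t_p+(1-t_p)q$ and applies Young's inequality to the product $\bigl(|\varphi(y)|/|x-y|^s\bigr)^{t_p}\bigl(|\varphi(y)|^q/|x-y|^{sq}\bigr)^{1-t_p}$, obtaining the convex-combination bound $t_p\Tss^1+(1-t_p)\Tss^q$; you instead observe directly that $h^p\le\max\{h,h^q\}\le h+h^q$ by monotonicity of $t\mapsto h^t$, which yields the same conclusion $\Tss^p\le\Tss^1+\Tss^q$ by a one-line calculus fact rather than Young. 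Beyond that, you are more careful on two points the paper leaves implicit: you note that measurability of the uncountable supremum in $x$ must be justified (via continuity in $p$ and restriction to a countable dense set), and you explicitly close the equivalence cycle by proving $(ii)\Rightarrow(iii)$, using the pointwise Fatou argument $\Tss^1(x)\le\liminf_{p\searrow1}\Tss^p(x)$ together with a choice of intermediate exponent $q'\in(1,q)$. The paper only writes out $(i)\Rightarrow(ii)$ (by exchanging sup and integral) and $(iii)\Rightarrow(i)$, then declares the proof complete; your version fills in the remaining arc. None of these amounts to a different route, but your treatment is more self-contained.
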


In the following remark, we give some explanation on the meaning of the condition of integrable ``nonlocal tail'', that we consider throughout the paper. 
\begin{remark}Given $\varphi \colon \Co \Omega \to \R$, extending  it to zero inside $\Omega$ by letting $\tilde \varphi \colon \Rn \to \R$ be such that $\tilde \varphi=\varphi$ in $\Co \Omega$ and $\tilde \varphi=0$ in $\Omega$, we observe that
\eqlab{ \label{proc1} \E^q(\tilde \varphi)= q \| \Tss^q (\varphi,\Co \Omega; \cdot)\|_{L^1(\Omega)} .
	}
Hence, requiring $\varphi$ to have integrable ``nonlocal tail'' ensures the existence of a competitor with finite nonlocal energy. More precisely, the competitor having finite energy \emph{must be} $\tilde \varphi$. In case $sq>1$, such a condition is rather strong and unnatural, since it forces $\varphi$ to be close to zero near the boundary of $\Omega$ from outside, thus unnecessarily restricting the class of admissible exterior data. For example, if $\varphi= c\neq 0$ in $\Co \Omega$ and $sq>1$, then $ \E^q(\tilde \varphi)=\infty$; nonetheless the function $u=c$ in $\Rn$ is clearly a competitor with finite $\E^q$ energy. 
\\
On the other hand, for $sq<1$ (which is our working hypothesis in this paper),  in light of  Lemma~\ref{domain_lem},  the existence of \emph{an arbitrary} competitor with finite $\E^q$ energy implies that the nonlocal tail is integrable, hence the two conditions are equivalent.
\\
In particular, if $sq<1$,  then, recalling the notations in \eqref{dgggg} and  \eqref{roba},
	\[ \mathcal X_\varphi^{q}(\Omega) \neq \varnothing \qquad \mbox{if and only if} \quad 
\Tss^q(\varphi, \Omega; \cdot) \in L^1(\Omega), \]
and in this case
	$
	\mathcal X_\varphi^{q}(\Omega) =\W_\varphi^{s,q}(\Omega)$ .
\end{remark}

We also recall that the recent literature has presented
some $\Gamma$-convergence results related
to the energy $\E^q$. Namely, in \cite{brasco}, the authors study the $\Gamma$-convergence of $\E^q(u)$ for $s\nearrow 1$ and with zero exterior data, while in \cite{li2019} the $\Gamma$-limit, as $q\to \infty$, of an energy related to ours is studied (precisely, their energy consists only of the  first term in \eqref{split}, thus of only those interactions occurring inside~$\Omega$, and a boundary condition is given).
For similar results which consider only contributions in $\Omega$,
concerning more general kind of functionals, see also \cite{BN16b}.

Concerning the limit procedure as~$p\searrow1$,
we also mention the paper~\cite{MR1897460}, where some
$L^1$-inequalities are obtained by passing to the limit in~$L^p$-inequalities as $p \searrow 1$.
	\medskip
	
	We come now to point~(3)
and to the discussion of the limit Euler-Lagrange equation. This direction of research
is inspired by the notion of $(s,1)$-Laplacian  introduced in~\cite{toled}, in the spirit of the classical equation for functions of least gradient \cite{mazrossleon, Mazon}. 
More precisely, in \cite{toled}, considering the Dirichlet problem with zero boundary condition, the limit as $p\searrow 1$ of weak solutions of the $(s,p)$-problems is proved to be weak solution of the $(s,1)$- problem (recall Definitions~\ref{D:p}
and~\ref{s1lapdel}). In this paper, we adapt the approach in \cite{toled}
and we consider the limit as $p\searrow 1$
of the problem~\eqref{Dirichlet1}, whose formal
limit consists in the fractional $1$-Laplacian problem given by \eqref{Dirichlet1ss}.
 
What is more, it is known that under a suitable integrability condition on $\varphi$, weak solutions of~\eqref{Dirichlet1} are equivalent to minimizers of $\E^p$ in $\W^{s,p}_\varphi(\Omega)$
(see e.g.~\cite{dkplocal}; for our setting,
we discuss this in detail in Proposition~\ref{deellos}).  We prove here the equivalence between weak solutions of \eqref{Dirichlet1ss} and minimizers of
the energy $\E^1$ under the rather strong assumption \eqref{new}
on the exterior data $\varphi$. More precisely: 
	
\begin{theorem}\label{answer} Let  $u  \in \mathcal{W}^{s,1}(\Omega)$. The following holds:
  \begin{itemize}
  \item[(i)] If  $u$ is a weak solution  to the problem \eqref{armoni}, then $u$ is an $s$-minimal function in $\Omega$.

  \item[(ii)] Assume that there exists a weak solution $\overline{u}\in\W^{s,1}(\Omega)$ of \eqref{armoni}. Then any $s$-minimal function $u$  in $\Omega$ such that $\overline{u}=u$ almost everywhere in $\Co\Omega$, is also $u$ a weak solution of \eqref{armoni}.
 
   \item[(iii)]
   Let
\[ q\in \left(1, \min\left\{ \frac{n}{n-s}, \frac{n}{n+s-1}\right\}\right),\]
and let 
\[ s_q:=s+n-\frac{n}q \in (s,1).\]
Let $\varphi \colon \Co \Omega \to \R$ be such that
\eqlab{ \label{new}
\Ts_s^1 (\varphi,\Co \Omega; \cdot) \in L^1(\Omega), \qquad \Ts_{s_q}^q
(\varphi,\Co \Omega; \cdot) \in L^1(\Omega).}
Then, there exists a weak solution $u\in \W^{s,1}_\varphi(\Omega)$ to
the problem \eqref{Dirichlet1ss}. 
  \end{itemize}

\end{theorem}

It is interesting to observe that there is a ``mismatch'' between the conditions on $u|_{\Co \Omega}$ in points (i) and (iii) in
Theorem~\ref{answer}. On the one hand, no requirement---beside measurability---is needed in  (i) in order to ensure that a weak solution is an $s$-minimal function. On the other hand, in (iii) $\varphi$ is required to satisfy some appropriate uniform weighted integrability condition, to prove that weak solutions exist. Once a weak solution is known to exist, by point (ii), any $s$-minimal function is still a weak solution. 

The reason for the strong assumption \eqref{new} resides in the
asymptotic technique that we employ in the proof of the theorem, which
follows the argument in \cite{Mazon} (basically, the H\"{o}lder inequality is employed,
and such an $s_q$ appears to be the correct exponent, see page \pageref{necsp}).
It would be interesting to understand whether an $s$-minimal function is necessarily
a weak solution, eventually under only the natural assumption of the integrability of  $\Ts_s^1$.

	\medskip
	
It is interesting to observe that if $\|\varphi\|_{L^\infty(\Co\Omega)}<\infty$, then condition \eqref{new} is satisfied.
In particular, in the context of characteristic functions, 
we obtain the forthcoming
Corollary~\ref{coroagg}. To state this result, 
we recall that
the $s$-perimeter of a measurable set $E\subset \Rn$ in  an open set $\Omega \subset \Rn$ is given by
	\[
		\Per_s(E,\Omega)= \frac12 \iint_{Q( \Omega)} \frac{|\chi_E(x)-\chi_E(y)|}{|x-y|^{n+s}} dx \, dy
		=\E^1(\chi_E).
	 \] 
We refer to \cite{nms}, where this operator was first introduced. We recall that a set $E$ is said to be $s$-minimal in $\Omega$ if ${\rm Per}_s(E, \Omega) < \infty$ and
\begin{equation}\label{poiuytrdfghfjjk}
{\rm Per}_s(E, \Omega) \leq {\rm Per}_s(F, \Omega) \quad \hbox{for any} \ F \subset \Rn \ \ \hbox{such that} \ \ F\setminus \Omega = E\setminus \Omega.\end{equation}
In this framework, we state the following result.

\begin{corollary}\label{coroagg}
	Let $E\subset\R^n$ be such that $\Per_s(E,\Omega)<\infty$. Then, $E$ is $s$-minimal in $\Omega$ if and only if $\chi_E$ is a weak solution of $(-\Delta)^s_1\chi_E=0$ in $\Omega$.	
\end{corollary}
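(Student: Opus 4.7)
The plan is to derive the corollary by chaining three equivalences, only one of which is not already established in the paper.

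First, I would observe that $\chi_E$ is bounded with $\|\chi_E\|_{L^\infty(\Co \Omega)}\leq 1$, so every integrability requirement along the chain is automatic. Indeed, since $\Omega$ is bounded with Lipschitz boundary, a direct estimate gives $\Tss^1(\chi_E,\Co \Omega;x)\leq C\,\dist(x,\partial\Omega)^{-s}$, which is integrable on $\Omega$ because $s<1$; and by the same boundedness together with the remark stated right after Lemma~\ref{EQYU:FOR}, hypothesis~\eqref{new} holds for $\varphi:=\chi_E|_{\Co \Omega}$. Using the former, \cite[Lemma~2.1]{bdlv20} (recalled right after Definition~\ref{DEF-2}) identifies, for $\chi_E$, the notions of $(s,1)$-minimizer and of $s$-minimal function; using the latter, Theorem~\ref{MR3} further identifies these notions with that of weak solution of $(-\Delta)_1^s\chi_E=0$ in $\Omega$ in the sense of Definition~\ref{s1lapdel}.

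What remains is the purely variational equivalence
\[
E \text{ is $s$-minimal in } \Omega \quad \Longleftrightarrow \quad \chi_E \text{ is an $(s,1)$-minimizer in } \Omega.
\]
The $\Leftarrow$ direction is immediate: any admissible competitor set $F$ with $F\setminus\Omega=E\setminus\Omega$ yields $\chi_F\in\W^{s,1}_{\chi_E}(\Omega)$ with $\E^1(\chi_F)=\Per_s(F,\Omega)$. For the $\Rightarrow$ direction, given $v\in\W^{s,1}_{\chi_E}(\Omega)$, I would first truncate to $v^*:=\max\{0,\min\{v,1\}\}$: this preserves the exterior datum and, because the truncation map is $1$-Lipschitz, satisfies the pointwise bound $|v^*(x)-v^*(y)|\leq|v(x)-v(y)|$, hence $\E^1(v^*)\leq\E^1(v)$. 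I would then invoke the fractional coarea identity
\[
\E^1(v^*) = \int_0^1 \Per_s\bigl(\{v^*>t\},\Omega\bigr)\,dt,
\]
which follows from the pointwise layer-cake equality $|v^*(x)-v^*(y)|=\int_0^1|\chi_{\{v^*>t\}}(x)-\chi_{\{v^*>t\}}(y)|\,dt$ by Fubini. For every $t\in(0,1)$ the level set $\{v^*>t\}$ coincides with $E$ outside $\Omega$, so by the assumed $s$-minimality of $E$ one has $\Per_s(E,\Omega)\leq\Per_s(\{v^*>t\},\Omega)$; integrating in $t$ yields $\E^1(\chi_E)=\Per_s(E,\Omega)\leq\E^1(v^*)\leq\E^1(v)$, which is exactly the $(s,1)$-minimality of $\chi_E$.

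The only nonroutine ingredient in this plan is the fractional coarea formula on $Q(\Omega)$, but it is by now classical and its proof is essentially just Fubini applied to the layer-cake representation. Everything else reduces to elementary truncation and to direct invocations of results already stated in the excerpt, so I would not anticipate any substantial obstacle beyond bookkeeping.
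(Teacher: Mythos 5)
Your argument is correct and follows the same route the paper intends: Corollary~\ref{coroagg} is obtained by combining Theorem~\ref{MR3} (applied with the bounded exterior datum $\varphi=\chi_E|_{\Co\Omega}$, so the tail hypotheses are automatic) with the equivalence ``$E$ is $s$-minimal in $\Omega$ iff $\chi_E$ is an $(s,1)$-minimizer in $\Omega$,'' which the paper cites from~\cite{bdlv20}, together with \cite[Lemma~2.1]{bdlv20} to pass between $(s,1)$-minimizers and $s$-minimal functions. The only place you go beyond citation is in re-deriving that first equivalence via truncation plus the coarea identity on $Q(\Omega)$; this is exactly the coarea mechanism the paper attributes to~\cite{bdlv20}, so it does not change the structure of the argument, and your truncation-plus-layer-cake derivation is sound (including the implicit handling of competitors $F$ with $\Per_s(F,\Omega)=\infty$, for which the inequality is vacuous). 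One small imprecision: the remark you point to controls $\sup_p\Tss^{p}$, whereas hypothesis~\eqref{new} uses the shifted tail $\Ts_{s_p}^{p}$ with $n+s_p p=(n+s)p$; the same estimate works for bounded $\varphi$ provided one chooses $q$ close enough to $1$ that $(n+s)q<n+1$ (which is always possible within the admissible range for $q$), and that is the computation behind the paper's assertion that bounded exterior data satisfies~\eqref{new}.
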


We now state a regularity result for the $s$-minimal functions as defined in \eqref{DEF-2}, which can be derived from the uniform density and perimeter estimates of the $s$-minimal sets.
Specifically, in the following result,
we will consider a subdomain~$\Omega'$ and obtain oscillations
and $BV$ estimates. In further detail,
we will control the supremum of $s$-minimal functions
in terms of the mass of their positive part in the domain and the distance
between the given subdomain and the boundary of the original domain
(similarly, one can
control the infimum
in terms of the mass of the negative part and the domain distance).
Furthermore, we bound the $BV$ seminorm
of the $s$-minimal function by the mass in the domain.
Our precise result goes as follows:

\begin{theorem}\label{thmreg}
	If $u\in\W^{s,1}(\Omega)$ is $s$-minimal in $\Omega$, then $u\in L^\infty_{loc}(\Omega)\cap BV_{loc}(\Omega)$. More precisely, for every $\Omega'\Subset\Omega$,
	\eqlab{\label{linftysupest}
	\sup_{\Omega'}u\leq \frac{1}{c\dist(\Omega',\partial\Omega)^n}\|u_+\|_{L^1(\Omega)}\quad\mbox{and}\quad\inf_{\Omega'}u\geq -\frac{1}{c\dist(\Omega',\partial\Omega)^n}\|u_-\|_{L^1(\Omega)},
}
	where $u_+:=\max\{0,u\}$ and $u_-:=\min\{0,u\}$, and $c=c(n,s)>0$ is the constant of the uniform density estimates in \cite[Theorem 4.1]{nms}, and there exists a positive constant $C$ depending only on $n,s,\Omega'$ and $\dist(\Omega',\partial\Omega)$, such that
	\eqlab{\label{bvlocest}
	|Du|(\Omega')\leq C\|u\|_{L^1(\Omega)}.	
}
\end{theorem}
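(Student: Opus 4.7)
My plan is to reduce the regularity of the $s$-minimal function $u$ to that of its super-level sets $E_t:=\{u>t\}$. The key preliminary step, which I view as the main technical point, is to establish that for almost every $t\in\R$ the set $E_t$ is $s$-minimal in $\Omega$ in the sense of~\eqref{poiuytrdfghfjjk}. This rests on the nonlocal coarea identity
\[
\iint_{Q(\Omega)}\frac{|u(x)-u(y)|}{|x-y|^{n+s}}\,dx\,dy\;=\;2\int_{\R}\Per_s(E_t,\Omega)\,dt,
\]
valid for any measurable $u$, together with a standard slicing argument: if, for a set of levels $t$ of positive measure, $E_t$ could be strictly beaten by competitors $F_t$ with $F_t\setminus\Omega=E_t\setminus\Omega$, then the layer-cake reconstruction of the $F_t$'s would assemble into an admissible $v\in\W^{s,1}(\Omega)$ with $v=u$ a.e.\ in $\Co\Omega$, strictly decreasing the left-hand side and thus contradicting~\eqref{RENO}.

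Granted the level-set reduction, the bound~\eqref{linftysupest} follows. Fix $\Omega'\Subset\Omega$, write $d:=\dist(\Omega',\partial\Omega)$ and $L:=\sup_{\Omega'}u$. For every $t\in(0,L)$, the set $E_t$ is $s$-minimal in $\Omega$ and has positive measure in $\Omega'$; applying the uniform density estimate of~\cite[Theorem~4.1]{nms} at an essential boundary point of $E_t$ located inside a ball $B_d(x_0)\subset\Omega$ with $x_0\in\overline{\Omega'}$---and in the subcase $E_t\supseteq\Omega'$ a.e.\ just estimating the measure of $E_t$ in such a ball directly---I obtain
\[
|E_t\cap\Omega|\;\geq\;c(n,s)\,d^n.
\]
Integrating via the layer-cake identity yields
\[
\|u_+\|_{L^1(\Omega)}\;=\;\int_0^\infty|E_t\cap\Omega|\,dt\;\geq\;c(n,s)\,d^n\,L,
\]
which is the first inequality in~\eqref{linftysupest}; the bound on $\inf_{\Omega'}u$ follows by applying the same reasoning to $-u$, which is also $s$-minimal in $\Omega$.

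For~\eqref{bvlocest}, I invoke the classical coarea formula for $BV$-functions,
\[
|Du|(\Omega')\;=\;\int_{\R}P(E_t,\Omega')\,dt,
\]
and note that, by~\eqref{linftysupest} just proved, the integrand vanishes outside the interval $[-M,M]$ with $M:=\|u\|_{L^1(\Omega)}/(c(n,s)\,d^n)$. For $t\in[-M,M]$, since $E_t$ is $s$-minimal in $\Omega$, a uniform classical-perimeter bound $P(E_t,\Omega')\leq C_1(n,s,\Omega',d)$ holds as a consequence of the density estimates combined with a covering argument (or, alternatively, by comparison with the $s$-perimeter and its limit as $s\nearrow 1$), and integrating over $[-M,M]$ gives~\eqref{bvlocest}. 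The main obstacle I anticipate is Step~1: the minimality~\eqref{RENO} is tested against all admissible $v$, so the slicing argument must be executed carefully to verify that the layer-cake reassembly from improved level competitors produces an admissible $v\in\W^{s,1}(\Omega)$ coinciding with $u$ in $\Co\Omega$; a secondary subtlety is tracking the constants in the classical-perimeter bound for $s$-minimal sets used in Step~3.
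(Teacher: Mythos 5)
Your strategy coincides with the paper's: reduce to the level sets $\{u\geq\lambda\}$ being $s$-minimal sets, then get the $L^\infty$ bound from the volume density estimates of \cite[Theorem~4.1]{nms} and the $BV$ bound from the coarea formula plus a uniform perimeter estimate. The paper does not re-derive the level-set reduction (Step~1 of your plan) but simply cites \cite[Theorem~1.3]{bdlv20}; you are right to single this out as the ``main technical point,'' because the ``standard slicing argument'' you sketch is in fact not routine --- replacing $E_t$ by better competitors $F_t$ across a positive-measure set of levels does not, a priori, produce a nested family, so the layer-cake reconstruction of a single admissible $v$ is precisely the nontrivial content of the Bombieri--De~Giorgi--Giusti type argument adapted in \cite{bdlv20}. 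Rather than re-proving it, you should cite it. Your treatment of the $L^\infty$ bound is sound and, if anything, a touch more careful than the paper's about the degenerate case $E_t\supseteq\Omega'$ a.e.; the paper instead bounds $\sup_{\Omega'}u$ by $\sup\{\lambda>0: 0<|\{u\geq\lambda\}\cap\Omega'|<|\Omega'|\}$, which sidesteps that case. The one place where your proposal is genuinely incomplete is the uniform perimeter bound for $s$-minimal sets in Step~3: ``density estimates combined with a covering argument'' does not by itself yield a perimeter bound (volume density controls measures, not $(n-1)$-dimensional boundary measure), and the $s\nearrow 1$ comparison route would require uniform control you have not set up. The paper instead invokes the dedicated perimeter estimate of \cite[Theorem~1.1]{CSV19} on each ball $B_{d/2}(x_i)$ of a finite cover of $\overline{\Omega'}$; you should do the same, as it makes the constant $C(n,s,\Omega',\dist(\Omega',\partial\Omega))$ explicit.
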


We conclude this introduction with some remarks.
We notice that we can rephrase the definition
in~\eqref{poiuytrdfghfjjk}
by saying that a set $E$ is $s$-minimal in $\Omega$ if and only if $\chi_E$ minimizes $\E^1$ within the subspace of $\mathcal{W}^{s,1}(\Omega)$ consisting of those characteristic functions which are equal to $\chi_E$ outside $\Omega$.

Actually, by making use of an appropriate co-area formula, recently in \cite{bdlv20} it was proved that a set $E$ is $s$-minimal in $\Omega$ if and only if $\chi_E$ is an $(s,1)$-minimizer in $\Omega$---that is, $\chi_E$ minimizes $\E^1$ not only among characteristic functions, but among \emph{arbitrary} functions belonging to $\mathcal{W}^{s,1}_{\chi_E}(\Omega)$.

Furthermore, the connection (in the classical framework) between functions of least gradient and functions with area minimizing level sets was studied in the seminal paper by Bombieri, De Giorgi and Giusti \cite{bomby} and later by Sternberg, Williams and Ziemer \cite{SWZ}. Recently, Bucur, Dipierro, Lombardini and Valdinoci \cite{bdlv20} have obtained the nonlocal counterpart, in the fractional setting, of the above classical results by establishing, on the one hand, that
if $u \in \mathcal{W}^{s,1}(\Omega)$ is an $s$-minimal function in $\Omega$, then, for all $\lambda \in \R$, the set $\{ u \geq \lambda \}$ is $s$-minimal in $\Omega$,
and, on the other hand, that
if $u \in \mathcal{W}^{s,1}(\Omega)$ and $\{ u \geq \lambda \}$ is $s$-minimal in $\Omega$ for almost every $\lambda \in \R$, then $u$ is an $s$-minimal function in $\Omega$.

\medskip

The rest of this paper is organized as follows. Section~\ref{SEF:cv}
contains the proofs of Lemmata \ref{domain_lem} and~\ref{EQYU:FOR}.
In Section~\ref{7yhbf}
we give the proof of Theorem~\ref{theorem}, while Section~\ref{GAMMASE}
is devoted to the proofs of Theorems~\ref{gammy}
and~\ref{fixeddata}, and the proof
of Theorem~\ref{answer}
is contained in Section~\ref{tygh7t798fdwevirbg}.
The final appendix \ref{appendixxx} contains additional comments on the
asymptotics as~$s\to1^-$
and the proof of Theorem~\ref{thmreg}.

\section{Proof of Lemmata \ref{domain_lem} and~\ref{EQYU:FOR}}\label{SEF:cv}

This section is devoted to the proofs of Lemmata \ref{domain_lem} and~\ref{EQYU:FOR}.
We start by considering the second of these lemmata.

\begin{proof}[Proof of Lemma~\ref{EQYU:FOR}]
It is easy to notice that
	\bgs{		\sup_{p} \|\Tss^p(u,\Co \Omega; \cdot)\|_{L^1(\Omega)} = &\;\sup_p \int_\Omega\left( \int_{\Co \Omega} \frac{|u(y)|^p}{|x-y|^{n+sp}} dy \right) dx
	\\
 \leq &\;	 \int_\Omega \left( \sup_p  \int_{\Co \Omega} \frac{|u(y)|^p}{|x-y|^{n+sp}}  dy \right) dx =  \| \sup_{p} \Tss^p(u,\Co \Omega; \cdot)\|_{L^1(\Omega)}.
 }
Thus, we only need to prove the implication $(iii)\implies(i)$. For this, notice that for every $p\in(1,q)$ there exists a unique $t_p\in(0,1)$ such that $p=t_p+(1-t_p)q$. Thus, by Young's inequality, we obtain
	\bgs{
	\int_\Omega& \left( \sup_{p\in(1,q)}  \int_{\Co \Omega} \frac{|\varphi(y)|^p}{|x-y|^{n+sp}}  dy \right) dx
=\int_\Omega \left( \sup_{t\in(0,1)}  \int_{\Co \Omega} \frac{|\varphi(y)|^{t}}{|x-y|^{(n+s)t}}\frac{|\varphi(y)|^{(1-t)q}}{|x-y|^{(1-t)(n+sq)}}  dy \right) dx\\
&
\qquad\leq\int_\Omega \left( \sup_{t\in(0,1)}  \int_{\Co \Omega} t\,\frac{|\varphi(y)|}{|x-y|^{n+s}}+(1-t)\,\frac{|\varphi(y)|^{q}}{|x-y|^{n+sq}}  dy \right) dx\\
&
\qquad\leq \|\Tss^1(\varphi,\Co \Omega; \cdot)\|_{L^1(\Omega)}+\|\Tss^q(\varphi,\Co \Omega; \cdot)\|_{L^1(\Omega)}.
}
This concludes the proof of Lemma~\ref{EQYU:FOR}.
\end{proof}

Concerning claim~(iii) in Lemma~\ref{EQYU:FOR},
we stress that in order to have an equivalent statement,
it is not enough to require that there exists $q>1$ such that $\Tss^q(\varphi,\Co \Omega; \cdot) \in L^1(\Omega)$, since the tail $\Tss^1(\varphi,\Co \Omega; \cdot)$ might not be integrable: an explicit
example of this phenomenon goes as follows.

\begin{example}\label{Bolas}
Let $\Omega\Subset B_R\subset\R^n$, with $R>2$. Consider the function
\sys[ \varphi(y) :=]{& \frac{|y|^s}{\log |y|} && \mbox{  in } \;  \Co B_R,
	\\
	&0 && \mbox{  in } \;B_R.}
Then, $\Tss^p(\varphi,\Co\Omega;\cdot)\in L^1(\Omega)$ for every $p>1$, and $\Tss^1(\varphi,\Co\Omega;\cdot)\not\in L^1(\Omega)$.

Indeed, notice at first that since $\Omega\Subset B_R$ there exist two constants $a,b>0$, depending on $\Omega$ and $R$, such that
\[
a|y|\leq|x-y|\leq b|y|\quad\mbox{for every }x\in\Omega\mbox{ and }y\in\Co B_R.
\]
Hence
\bgs{
\|\Tss^p(\varphi,\Co\Omega;\cdot)\|_{L^1(\Omega)}=\int_\Omega\left(\int_{\Co B_R}\frac{|y|^{sp}}{(\log |y|)^p|x-y|^{n+sp}}\right)dx
\leq \frac{|\Omega|}{a^{n+sp}}\int_{\Co B_R}\frac{dy}{(\log |y|)^p|y|^n}<\infty,
}
for every $p>1$.
On the other hand,
\bgs{
	\|\Tss^1(\varphi,\Co\Omega;\cdot)\|_{L^1(\Omega)}=\int_\Omega\left(\int_{\Co B_R}\frac{|y|^{s}}{(\log |y|)|x-y|^{n+s}}\right)dx
	\geq \frac{|\Omega|}{b^{n+sp}}\int_{\Co B_R}\frac{dy}{(\log |y|)|y|^n}=+\infty.
}
\end{example}

\begin{remark}\label{yup1} 
While the equivalent claims in Lemma~\ref{EQYU:FOR} may look, at a very first glance, focused on rather
fussy assumptions, we observe that they are satisfied in many simple cases of interest.
For instance, let $\varphi \in L^\infty(\Co \Omega)$, and let $q\in [1,1/s)$. Then 
	\begin{equation}\label{098765} \sup_{p\in[1,q]} \Tss^p(\varphi,\Co \Omega; \cdot)  \in L^1(\Omega).\end{equation}
    Indeed, we note that if $|x-y| \geq 1$, then $|x-y|^{n+sp} \geq |x-y|^{n+s}$ and if $|x-y| \leq 1$, then $|x-y|^{n+sp} \geq |x-y|^{n+sq}$. Exploiting this observation, we obtain
\eqlab{\label{bleah3}
	\sup_{p\in[1,q]} \Tss^{p}(\varphi,\Co \Omega; x)  \leq &\;\|\varphi\|_{L^\infty(\Omega)}
		\sup_{p\in[1,q]} \int_{\Co \Omega} \frac{ dy}{|x-y|^{n+sp}} 
		\\
		\leq &\; \|\varphi\|_{L^\infty(\Omega)}\left(\int_{\Co \Omega \cap \{ |x-y| \geq 1\} } \frac{ dy}{|x-y|^{n+s}}+  \int_{\Co \Omega \cap \{ |x-y| < 1\} } \frac{ dy}{|x-y|^{n+sq}} \right)\\
		\leq &\; \|\varphi\|_{L^\infty(\Omega)}\left(\int_{\Co \Omega } \frac{ dy}{|x-y|^{n+s}}+  \int_{\Co \Omega } \frac{ dy}{|x-y|^{n+sq}} \right),
		}
	for every $x\in\Omega$. We recall that, since $\Omega$ is bounded and has Lipschitz boundary, 
		\[	
			 \int_{\Omega} \int_{\Co \Omega} \frac{dx\, dy}{|x-y|^{n+\sigma}} =\Per_{\sigma}(\Omega,\Rn)<\infty,
		\]
	for every $\sigma\in(0,1)$. Thus, by \eqref{bleah3}, we conclude that
	\[
	\int_\Omega\sup_{p\in[1,q]} \Tss^p(\varphi,\Co \Omega; x)\,dx
	\le  \|\varphi\|_{L^\infty(\Omega)}\left(\Per_{s}(\Omega,\Rn)+\Per_{sq}(\Omega,\Rn)\right)<\infty,
	\]
	as claimed in~\eqref{098765}.
	\end{remark}
		
We can further weaken the global boundedness condition on $\varphi$ given in Remark~\ref{yup1},
providing a larger class of functions which satisfy the equivalent conditions in Lemma~\ref{EQYU:FOR},
according to the following result:

	\begin{lemma} \label{yup3} Let $q\in \left(1,1/s\right)$ and let $\varphi \in W^{s,q}(\Co \Omega)$. Then 
	\eqlab{ \label{yup2} \sup_{p\in [1,q]} \Tss^{p}(\varphi,\Co \Omega; \cdot)  \in L^1(\Omega).}
	\end{lemma}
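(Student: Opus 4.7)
The plan is to follow the strategy of the proof of Lemma~\ref{EQYU:FOR}: using a weighted arithmetic-geometric mean inequality to reduce the supremum to the two endpoint tails, and then bounding each of them separately. Writing $p=t\cdot 1+(1-t)\cdot q$ with $t:=(q-p)/(q-1)\in[0,1]$ for $p\in[1,q]$, we have the factorisation
\[
\frac{|\varphi(y)|^{p}}{|x-y|^{n+sp}}=\left(\frac{|\varphi(y)|}{|x-y|^{n+s}}\right)^{t}\left(\frac{|\varphi(y)|^{q}}{|x-y|^{n+sq}}\right)^{1-t},
\]
and the weighted AM-GM inequality then yields pointwise
\[
\sup_{p\in[1,q]}\Tss^{p}(\varphi,\Co\OMega;x)\le\Tss^{1}(\varphi,\Co\OMega;x)+\Tss^{q}(\varphi,\Co\OMega;x),\qquad x\in\OMega.
\]
Hence it is enough to show that both $\Tss^{q}(\varphi,\Co\OMega;\cdot)$ and $\Tss^{1}(\varphi,\Co\OMega;\cdot)$ lie in $L^{1}(\OMega)$.

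For the $\Tss^{q}$ term, denote $d(y):=\dist(y,\partial\OMega)$ and note that $B_{d(y)}(y)\subset\Co\OMega$ for every $y\in\Co\OMega$. Fubini together with a spherical-coordinate estimate gives
\[
\|\Tss^{q}(\varphi,\Co\OMega;\cdot)\|_{L^{1}(\OMega)}=\int_{\Co\OMega}|\varphi(y)|^{q}\int_{\OMega}\frac{dx}{|x-y|^{n+sq}}\,dy\le\frac{\omega_{n-1}}{sq}\int_{\Co\OMega}\frac{|\varphi(y)|^{q}}{d(y)^{sq}}\,dy.
\]
Since $sq<1$, a fractional Hardy-type inequality on the Lipschitz domain $\Co\OMega$ (in the spirit of~\cite[Proposition~A.2]{teolu}) then controls the right-hand side by $C\|\varphi\|_{W^{s,q}(\Co\OMega)}^{q}<\infty$.

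The hard part will be the $\Tss^{1}$ term: since $\varphi\in W^{s,q}(\Co\OMega)$ with $q>1$ and $\Co\OMega$ has infinite measure, in general $\varphi\not\in L^{1}(\Co\OMega)$, and the Hardy inequality at exponent one is unavailable, so neither strategy above applies directly. The remedy is a near/far decomposition: fix $R>0$ so that $\OMega\Subset B_{R/2}$ and write $\Co\OMega=A\cup B$ with $A:=\Co\OMega\cap B_{R}$ and $B:=\R^{n}\setminus B_{R}$. On the bounded set $A$, the same Fubini bound combined with Hölder's inequality and the estimate already obtained for $\int_{\Co\OMega}|\varphi|^{q}/d^{sq}\,dy$ yields
\[
\int_{\OMega}\!\int_{A}\!\frac{|\varphi(y)|}{|x-y|^{n+s}}\,dy\,dx\le C\!\int_{A}\!\frac{|\varphi(y)|}{d(y)^{s}}\,dy\le C\!\left(\int_{A}\!\frac{|\varphi(y)|^{q}}{d(y)^{sq}}\,dy\right)^{\!1/q}\!|A|^{1/q'}<\infty.
\]
On the far set $B$, the choice of $R$ gives $|x-y|\ge|y|/2$ for every $x\in\OMega$ and $y\in B$; hence Fubini and Hölder's inequality provide
\[
\int_{\OMega}\!\int_{B}\!\frac{|\varphi(y)|}{|x-y|^{n+s}}\,dy\,dx\le C|\OMega|\!\int_{B}\!\frac{|\varphi(y)|}{|y|^{n+s}}\,dy\le C|\OMega|\,\|\varphi\|_{L^{q}(\Co\OMega)}\!\left(\int_{B}\!\frac{dy}{|y|^{(n+s)q'}}\right)^{\!1/q'}\!,
\]
with the last integral finite since $(n+s)q'>n$. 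Summing the near and far contributions yields $\Tss^{1}(\varphi,\Co\OMega;\cdot)\in L^{1}(\OMega)$, which together with the $\Tss^{q}$ estimate establishes~\eqref{yup2}.
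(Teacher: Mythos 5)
Your argument is correct, but it follows a genuinely different route from the paper's. You invoke the Young/AM--GM factorisation from Lemma~\ref{EQYU:FOR} to collapse the supremum over $p\in[1,q]$ to the two endpoint tails $\Tss^1$ and $\Tss^q$, and then handle each endpoint separately via Fubini--Tonelli together with a Hardy-type estimate and (for $\Tss^1$) a near/far decomposition $\Co\Omega=(\Co\Omega\cap B_R)\cup\Co B_R$. The paper instead works directly with the full family $p\in[1,q]$: it splits $\Co\Omega$ into the bounded annulus $\Omega_\eta\setminus\Omega$ and the far region $\Co\Omega_\eta$, and controls $\sup_p$ pointwise by further case distinctions on the sizes of $|\varphi(y)|$ and $|x-y|$. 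That route produces a term bounded by $\|\varphi\|_{W^{s/q,q}(\Omega_\eta\setminus\Omega)}$ for the long-range near-boundary contribution and so needs the Sobolev embedding $W^{s,q}\hookrightarrow W^{s/q,q}$, a step your endpoint reduction sidesteps entirely; in this respect your proof is cleaner, and it makes transparent that the lemma amounts to verifying condition~(iii) of Lemma~\ref{EQYU:FOR}. One point to tighten: in the $\Tss^q$ step you invoke ``a fractional Hardy-type inequality on the Lipschitz domain $\Co\Omega$'', but $\Co\Omega$ is unbounded while \eqref{fracH} (that is, \cite[Proposition~A.2]{teolu}) is stated for bounded Lipschitz domains. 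The inequality you need does hold, since $d(y)\geq1$ on $\Co\Omega_1$ (so there the weight is harmless and $\|\varphi\|_{L^q(\Co\Omega)}^q$ suffices), while on the bounded Lipschitz domain $\Omega_1\setminus\Omega$ one has $\dist\big(y,\partial(\Omega_1\setminus\Omega)\big)\leq d(y)$, so Hardy on $\Omega_1\setminus\Omega$ applies and is controlled by $\|\varphi\|_{W^{s,q}(\Co\Omega)}^q$; as written, however, the citation is used outside its stated scope and this short reduction should be spelled out.
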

	
	\begin{remark}
	The statement of Lemma~\ref{yup3} can be sharpened, by relaxing the assumption
	that~$\varphi \in W^{s,q}(\Co \Omega)$. For any $\eta>0$, we use the notation
\eqlab { \label{ometa}\Omega_\eta:= \{ y\in \R^n \; | \; \dist(y,\Omega)<\eta\}.}
As a matter of fact, given~$\eta>0$, one can consider the following alternative conditions:
\begin{multicols}{3}
\begin{enumerate}[label=(\alph*)]
\item  either $\varphi \in L^\infty( \Omega_\eta\setminus \Omega)$,
 \item or 
 $ \varphi \in W^{s,q}( \Omega_\eta\setminus \Omega)$, 
 \end{enumerate} 
 ${ } \qquad\qquad $ and 
 \begin{enumerate}[label=(\Alph*)]
\item  either $\varphi \in L^\infty( \Co \Omega_\eta)$,
 \item or $\varphi \in L^q(\Co \Omega_\eta)$.
 \end{enumerate} 
\end{multicols}
Indeed, with computations which are similar to those of the proofs of
Lemma~\ref{yup3} and Remark~\ref{yup1}, one notices that~\eqref{yup2} holds for any combination $(i)\&(I)$, with $i\in\{a,b\}$, $I\in\{A,B\}$.

On the other hand, we stress that assuming that~$\Tss^{q}(\varphi,\Co \Omega; \cdot)  \in L^1(\Omega)$ is not enough to ensure the validity of \eqref{yup2}---see Example \ref{Bolas}.
	\end{remark} 

\begin{proof}[Proof of Lemma~\ref{yup3}]
We notice that for $|x-y|<1$
	\[
		\frac{|\varphi(y)|^p}{|x-y|^{n+sp}} \leq \frac{|\varphi(y)|^q+1}{|x-y|^{n+sq}} 
,\]
while for $|x-y|\geq 1$ we have that
\[ \frac{|\varphi(y)|^p}{|x-y|^{n+sp}}\leq |\varphi (y)|^q + \frac{1}{|x-y|^{n+s}}.\]
Then we obtain
	\bgs{
 \|\Tss^p(\varphi,\Co \Omega; x) \|_{L^1(\Omega)}\leq&\;  \int_{\Omega} \int_{\Co \Omega \cap\{ |x-y|<1\} } \frac{ |\varphi(y)|^q}{|x-y|^{n+sq}}dx \, dy  + \int_{\Omega} \int_{\Co \Omega} \frac{ dx \, dy}{|x-y|^{n+sq}} 
\\
&\;
+  \int_{\Omega} \int_{\Co \Omega \cap\{ |x-y|\geq 1\} }  |\varphi(y)|^q\, dx \, dy +\int_{\Omega} \int_{\Co \Omega} \frac{ dx \, dy}{|x-y|^{n+s}} \\
	\leq &\; 
 \int_{\Omega} \int_{B_R\setminus \Omega} \frac{ |\varphi(y)|^q}{|x-y|^{n+sq}}dx \, dy +  |\Omega| \|\varphi\|_{L^q(\Co \Omega)}^q + \Per_{sq}(\Omega,\Rn) +\Per_{s}(\Omega,\Rn),
}	
for some $R>0$ large enough.
Using Fubini-Tonelli's Theorem and \eqref{fracH}, we have that
\begin{equation}\begin{split}\label{BBB} 
	 		\int_{\Omega} \left( \int_{B_R \setminus \Omega}\frac{ |\varphi(y)|^q  }{|x-y|^{n+s q}}dy \right) dx 
	 		\leq  &\; 
	 		\int_{B_R\setminus \Omega} |\varphi(y)|^q \left( \int_{\Co \left(B_R \setminus \Omega\right) } \frac{dx}{|x-y|^{n+s q}} \right) dy  
\\
	 		\leq &\; C(n,s,q,\Omega) \|\varphi\|_{W^{s,q}(B_R \setminus \Omega)}^q.
	 		\end{split}\end{equation}
Moreover, for every~$\sigma\in\{s,sq\}\subset (0,1)$, we have that 
		\bgs{
			 \int_{\Omega} \int_{\Co \Omega} \frac{dx\, dy}{|x-y|^{n+\sigma}} =\Per_{\sigma}(\Omega,\Rn)<\infty.
		}
With this, we reach the conclusion of the lemma.\end{proof}

	We point out that  the problem of looking for a minimizer of $\E^q$, for $q$ close enough to $1$, in the space $\W^{s,q}(\Omega)$, is well posed if and only if the tail is summable, as stated in Lemma~\ref{domain_lem}, that we now prove.

\begin{proof}[Proof of Lemma~\ref{domain_lem}]
	First of all we observe that, since $sq<1$ and $\Omega$ is a bounded open set with Lipschitz boundary, by the fractional Hardy-type inequality
(stated without a proof  in \cite[(17)]{Dyda}), we have that for all $v\in W^{s,q}(\Omega)$ it holds
	\eqlab{ \label{fracH}
		\int_\Omega \left(\int_{\Co \Omega} \frac{ |v(x)|^{q}}{|x-y|^{n+sq}} dy\right) dx \leq  \int_\Omega \frac{|v(x)|^q} {(\mbox{dist}( x, \partial \Omega))^{sq}} dx \leq \overline C \|v\|^q_{W^{s,q}(\Omega),}
	}
for some constant $\overline C:=\overline C(n,s,q,\Omega)>0$.
For a simple proof of the fractional Hardy inequality, based on the fractional Hardy
inequality on half-spaces of \cite{Seir}, see also \cite[Proposition A.2]{teolu}.

	Suppose now that $\E^q(u)<\infty$. Then, from \eqref{split},
	\[
	[u]_{W^{s,q}(\Omega)}^q\le2 q\E^q(u)<\infty.
	\]
Interestingly, this is enough to conclude that $\|u\|_{L^q(\OMega)}<\infty$, without having to make any kind of integrability assumption on $u$. Indeed, since $u$ is measurable and finite almost everywhere, we can find a measurable set $E\subseteq\Omega$ such that $|E|>0$ and $\int_E|u|dx<+\infty$. Then, a simple computation shows that
	\[
	\|u\|_{L^q(\Omega)}^q\leq\frac{2^{q-1}}{|E|}\left\{\mbox{diam}(\Omega)^{n+sq}[u]_{W^{s,q}(\Omega)}^q+\frac{|\Omega|}{|E|^{q-1}}\Big|\int_E u(\xi)\,d\xi\Big|^q\right\}<+\infty.
	\]
	For the details we refer to \cite[Lemma D.1.2]{tesilu} and its proof. Thus, if $u$ is a measurable function such that $\E^q(u)<\infty$, we have that~$u\in W^{s,q}(\Omega)$.

	Now, if we denote by $\bar u:\R^n\to\R$ the function
	\sys[ \bar u:=]{ & u && \mbox { in } \; \Omega,
	\\ & 0 && \mbox{ in  } \; \Co \Omega,} 
by \eqref{fracH} we have
	 \[
	\E^q(\bar u)=\frac{1}{2q} [u]^q_{W^{s,q}(\Omega)}
	+\frac{1}{q}\int_\Omega \left(\int_{\Co \Omega} \frac{ |u(x)|^{q}}{|x-y|^{n+sq}} dy\right) dx<\infty.
	\]
	Therefore we obtain
	\[
	\|\Tss^q (u, \Co \Omega; x)\|_{L^1(\Omega)}=q\, \E^q(u-\bar u)\le q\, 2^{q-1}(\E^q(u)+\E^q(\bar u))<\infty.
	\]
	 
	For the converse implication we can argue similarly. Since $u\in W^{s,q}(\Omega)$ by hypothesis, exploiting once again \eqref{fracH} we have $\E^q(\bar u)<\infty$. We define also
	\sys[ u_0:=]{ & 0 && \mbox { in } \; \Omega,
		\\ & u && \mbox{ in  } \; \Co \Omega,}
	and we observe that
	\[
	\E^q(u_0)=\frac{1}{q}\|\Tss^q (u, \Co \Omega; x)\|_{L^1(\Omega)}<\infty.
	\]
As a consequence,
	\[
	\E^q(u)=\E^q(u_0+\bar{u})\le 2^{q-1}(\E^q(u_0)+\E^q(\bar u))<\infty.
	\]
	This concludes the proof of the desired result.
	 \end{proof}
	 
\section{Convergence of minimizers of the $(s,p)$-energy to minimizers of the $(s,1)$-energy}\label{7yhbf}

The main goal of this section is to prove Theorem~\ref{theorem}. For this,
we give some auxiliary technical results of general flavor.

\subsection{A continuous embedding}

As a technical tool, we prove the continuous embedding  $W^{s,p}(\Omega)\hookrightarrow W^{\sigma,1}(\Omega)$ for any $p\in [1,\infty)$, with $\sigma<s$. 
For this, we start with an auxiliary
inequality. 

\begin{lemma} \label{lemmatwo} Let $p\in(1,+\infty)$ and $\sigma\in(0,s)$. It holds that
	\eqlab{ \label{tre}
		 \; [u]_{W^{\sigma,1}(\Omega)} \leq \left( \frac{C(n,\Omega)(p-1)}{p(s-\sigma)}\right)^{\frac{p-1}p} C(\Omega)^{s-\sigma} [u]_{W^{s,p}(\Omega)}, 	
	}
	where
	\[
	C(n,\Omega):=|\Omega|\Ha^{n-1}(\partial B_1)\quad\mbox{and}\quad
	C(\Omega):=\diam(\Omega).
	\]
	\end{lemma}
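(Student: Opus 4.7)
The plan is to view $[u]_{W^{\sigma,1}(\Omega)}$ as a double integral against the measure $dx\,dy$ and factor the integrand so that one piece matches the $W^{s,p}$-Gagliardo integrand and the other is a pure power of $|x-y|$. Concretely, I would write
\[
\frac{|u(x)-u(y)|}{|x-y|^{n+\sigma}}=\frac{|u(x)-u(y)|}{|x-y|^{s+n/p}}\cdot\frac{1}{|x-y|^{n+\sigma-s-n/p}},
\]
and apply H\"older's inequality with conjugate exponents $p$ and $p'=p/(p-1)$: the first factor, raised to the $p$-th power and integrated, reproduces $[u]_{W^{s,p}(\Omega)}^p$, while the second factor produces a purely geometric integral on $\Omega\times\Omega$. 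After a short arithmetic simplification of the exponent (using $p'(s+n/p-\sigma-n)=(s-\sigma)p/(p-1)-n$), this gives
\[
[u]_{W^{\sigma,1}(\Omega)}\le[u]_{W^{s,p}(\Omega)}\left(\iint_{\Omega\times\Omega}|x-y|^{\alpha-n}\,dx\,dy\right)^{(p-1)/p},\qquad\alpha:=\frac{(s-\sigma)p}{p-1}.
\]

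Next, I would estimate the geometric integral by polar coordinates. For each fixed $y\in\Omega$ one has $\Omega\subseteq B_{\diam(\Omega)}(y)$, so, since $\alpha>0$,
\[
\int_\Omega|x-y|^{\alpha-n}\,dx\le\Ha^{n-1}(\partial B_1)\int_0^{\diam(\Omega)}r^{\alpha-1}\,dr=\frac{\Ha^{n-1}(\partial B_1)}{\alpha}\,\diam(\Omega)^{\alpha}.
\]
Integrating over $y\in\Omega$ yields $\iint|x-y|^{\alpha-n}\le\frac{C(n,\Omega)}{\alpha}\,C(\Omega)^{\alpha}$, with the constants $C(n,\Omega)$ and $C(\Omega)$ as in the statement. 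Substituting $\alpha=(s-\sigma)p/(p-1)$ and raising to the power $(p-1)/p$ converts $\alpha$ into $(p-1)/(p(s-\sigma))$ inside the base and $\alpha\cdot(p-1)/p=s-\sigma$ in the exponent of $C(\Omega)$, producing exactly the bound in \eqref{tre}.

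The proof is essentially a single application of H\"older plus one integration in polar coordinates, so there is no real obstacle, only bookkeeping. The only point that must be checked is positivity of $\alpha$, which is needed both to make $\int_0^R r^{\alpha-1}\,dr$ finite and to justify dividing by $\alpha$; this is guaranteed by the hypotheses $\sigma<s$ and $p>1$, which are exactly the standing assumptions of the lemma.
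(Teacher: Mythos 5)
Your proof is correct and follows essentially the same route as the paper's: you apply Hölder with exponents $p$ and $p/(p-1)$ to the same factorization of the integrand (matching one factor to the $W^{s,p}$-integrand), and you estimate the residual geometric integral by radial integration over a ball of radius $\diam(\Omega)$. The only cosmetic difference is that the paper changes variables $z=x-y$ before the polar-coordinate computation, while you integrate directly around the fixed point; the arithmetic leading to the constant is identical.
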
 
	
	\begin{proof}
	Notice that if $[u]_{W^{s,p}(\Omega)} =\infty$, there is nothing to prove. Else, if  $[u]_{W^{s,p}(\Omega)} <\infty$, using H\"{o}lder's inequality, we get that
	\eqlab{ \label{uno}
		\; [u]_{W^{\sigma,1}(\Omega)} =&\; 
		 \int_\Omega \int_\Omega \frac{|u(x)-u(y)|}{|x-y|^{ \frac{n}p +s}} \frac{dx \, dy}{|x-y|^{ \frac{n(p-1)}p + \sigma -s}} 
	\\
	 \leq &\; \left(\int_\Omega \int_\Omega \frac{|u(x)-u(y)|^p}{|x-y|^{ n +sp}} dx \, dy\right)^{\frac1p} 
	\left(\int_\Omega \int_\Omega \frac{dx \, dy}{|x-y|^{ n + \frac{p(\sigma -s)}{p-1} } }\right)^{\frac{p-1}{p}}
	\\ 
	= &\; [u]_{W^{s,p}(\Omega)}\;  \mathcal J(p)^{\frac{p-1}p}.
	} 
	Changing variables $z=x-y$, using the notation $d:= \diam(\Omega)$ and recalling that $s-\sigma>0$ and $p>1$, we get 
	\bgs{
		\mathcal J(p) \leq &\; \int_{\Omega} \bigg( \int_{B_{d}(x)} \frac{dy}{|x-y|^{n+\frac{p(\sigma-s)}{p-1} }} \bigg) dx=\int_{\Omega} \bigg( \int_{B_{d}} \frac{dz}{|z|^{n+\frac{p(\sigma-s)}{p-1} }} \bigg) dx
	\\
	= &\;|\Omega|\Ha^{n-1}(\partial B_1) \frac{(p-1)d^{ \frac{p(s-\sigma)}{p-1} }}{ p(s-\sigma)}.
	}
We raise to the power~$\frac{p-1}p$ to obtain that
	\bgs{
		J(p)^{\frac{p-1}p}  \leq\left( \frac{C(n,\Omega)(p-1)}{p(s-\sigma)}\right)^{\frac{p-1}p}d^{s-\sigma}. 
	}
	From this and \eqref{uno} we obtain the claim in \eqref{tre}.
\end{proof}

As a consequence of Lemma~\ref{lemmatwo}, we obtain the following embedding result. 	
	 \begin{theorem}\label{quattro} Let $\sigma\in(0,s)$. Then the continuous embedding $W^{s,p}(\Omega)\hookrightarrow W^{\sigma,1}(\Omega)$ holds for every $p\in[1,\infty)$. In particular, for any $u\in W^{s,p}(\Omega)$,
	\bgs{ 
	 \; \|u\|_{W^{\sigma,1}(\Omega)} \leq \mathfrak C_1 \|u\|_{W^{s,p}(\Omega)} ,
	}
	where $\mathfrak C_1:= \mathfrak C_1(n,s,\sigma,\Omega)>0$ does not depend on $p$.
\end{theorem}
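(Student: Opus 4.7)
The plan is to deduce Theorem~\ref{quattro} from Lemma~\ref{lemmatwo} for the seminorm part and from H\"older's inequality on the bounded domain $\Omega$ for the $L^q$ part, with the main point being to check that all the constants are uniformly bounded in $p\in[1,\infty)$.

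First I would treat the case $p>1$. Lemma~\ref{lemmatwo} directly gives
\[
[u]_{W^{\sigma,1}(\Omega)}\le A(p)\,[u]_{W^{s,p}(\Omega)},\qquad A(p):=\left(\frac{C(n,\Omega)(p-1)}{p(s-\sigma)}\right)^{\frac{p-1}{p}} C(\Omega)^{s-\sigma}.
\]
Next I would verify that $\sup_{p>1}A(p)<\infty$. Writing $\theta:=(p-1)/p\in(0,1)$, the factor $\bigl(\tfrac{p-1}{p}\bigr)^{\theta}=\theta^{\theta}$ is bounded above by $1$ on $(0,1)$ (and tends to $1$ as $\theta\to0^+$ and as $\theta\to1^-$), while $\bigl(\tfrac{C(n,\Omega)}{s-\sigma}\bigr)^{\theta}\le\max\bigl\{1,\tfrac{C(n,\Omega)}{s-\sigma}\bigr\}$. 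Hence $A(p)$ is bounded by a constant depending only on $n$, $s$, $\sigma$, $\Omega$.

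For the boundary case $p=1$, which is not covered by Lemma~\ref{lemmatwo}, I would use the direct pointwise estimate $|x-y|^{s-\sigma}\le C(\Omega)^{s-\sigma}$ valid on $\Omega\times\Omega$, which immediately gives
\[
[u]_{W^{\sigma,1}(\Omega)}=\int_\Omega\!\int_\Omega\frac{|u(x)-u(y)|}{|x-y|^{n+s}}|x-y|^{s-\sigma}\,dx\,dy\le C(\Omega)^{s-\sigma}\,[u]_{W^{s,1}(\Omega)}.
\]
This matches the limit value $A(1^+)=C(\Omega)^{s-\sigma}$ from Step~1, so the seminorm estimate is uniform for all $p\in[1,\infty)$.

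Finally, I would control the $L^1$ part via H\"older's inequality on the bounded set $\Omega$, namely $\|u\|_{L^1(\Omega)}\le|\Omega|^{(p-1)/p}\|u\|_{L^p(\Omega)}\le\max\{1,|\Omega|\}\,\|u\|_{L^p(\Omega)}$, whose constant is again uniform in $p\ge1$. Adding the two estimates yields $\|u\|_{W^{\sigma,1}(\Omega)}\le\mathfrak C_1\|u\|_{W^{s,p}(\Omega)}$ with $\mathfrak C_1=\mathfrak C_1(n,s,\sigma,\Omega)$ independent of $p$, as claimed. The only real obstacle is bookkeeping the behaviour of $A(p)$ near the endpoints $p=1$ and $p=\infty$; once this uniformity is observed, the proof is essentially a one-line combination of Lemma~\ref{lemmatwo} with H\"older's inequality.
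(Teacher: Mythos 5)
Your proof is correct, and for $p>1$ it is essentially identical to the paper's: both invoke Lemma~\ref{lemmatwo} for the seminorm, observe that the constant $\left(\tfrac{C(n,\Omega)(p-1)}{p(s-\sigma)}\right)^{(p-1)/p}$ stays bounded over $p\in(1,\infty)$, and control $\|u\|_{L^1(\Omega)}$ by H\"older with the uniform constant $\sup_{p}|\Omega|^{(p-1)/p}$. Where you genuinely diverge is the endpoint $p=1$: the paper handles it by an approximation argument (take $u_k\in C^\infty_c(\Omega)$ with $u_k\to u$ in $W^{s,1}(\Omega)$, let $p\searrow 1$ in the $p>1$ estimate for each fixed $u_k$ via dominated convergence, then pass to the limit in $k$ by Fatou), whereas you simply note the pointwise bound $|x-y|^{s-\sigma}\le\diam(\Omega)^{s-\sigma}$ on $\Omega\times\Omega$ and conclude $[u]_{W^{\sigma,1}(\Omega)}\le C(\Omega)^{s-\sigma}[u]_{W^{s,1}(\Omega)}$ directly. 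Your route is shorter and more transparent: it avoids density, dominated convergence, and Fatou entirely, and it makes visible that for $p=1$ the embedding is a triviality because $\Omega$ is bounded. The paper's route is more uniform in style (one formula for all $p$, then a limiting argument at the boundary case), but gains nothing in this instance. Both produce a constant depending only on $n,s,\sigma,\Omega$, and your $p=1$ constant $C(\Omega)^{s-\sigma}$ is in fact no larger than the paper's $\mathfrak C_1$, so the conclusions agree.
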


\begin{proof}
	First of all, using the notation of Lemma~\ref{lemmatwo}, we observe that
	\[
	C_1(n,s,\sigma,\Omega):=\sup_{p\in(1,\infty)}\left( \frac{C(n,\Omega)(p-1)}{p(s-\sigma)}\right)^{\frac{p-1}p}<\infty.
	\]
	Therefore, by Lemma~\ref{lemmatwo}, we have that
	\[
	[u]_{W^{\sigma,1}(\Omega)} \leq C_1(n, s,\sigma,\Omega)C(\Omega)^{s-\sigma}  [u]_{W^{s,p}(\Omega)},
	\]
	for every $p\in(1,\infty)$. Moreover, by H\"{o}lder's inequality
	\bgs{
		\|u\|_{L^1(\Omega)} \leq |\Omega|^\frac{p-1}{p} \|u\|_{L^p(\Omega)}\leq C_2(\Omega)\|u\|_{L^p(\Omega)},
	}
where
\[
C_2(\Omega):=\sup_{p\in(1,\infty)}|\Omega|^\frac{p-1}{p}<\infty.
\]
This implies that
\eqlab{\label{tortuga666}
\|u\|_{W^{\sigma,1}(\Omega)} \leq \mathfrak C_1 \|u\|_{W^{s,p}(\Omega)},
}
for every $p\in(1,\infty)$, with
	\[
	\mathfrak C_1= C(n,s,\sigma,\Omega):=C_1(n, s,\sigma,\Omega)C(\Omega)^{s-\sigma}+C_2(\Omega).
	\]
	
	We are left to prove the claim in the case $p=1$. In order to do this, we exploit an approximation argument. By the density of $C^\infty_c(\Omega)$ in $W^{s,1}(\Omega)$, 
	given $u\in W^{s,1}(\Omega)$ we can find a sequence $\{u_k\}_k\subset C^\infty_c(\Omega)$ such that
	\eqlab{\label{killer_quokka}
	\lim_{k\to\infty}\|u-u_k\|_{W^{s,1}(\Omega)}=0.
}
	Notice that, since $\mathfrak C_1$ does not depend on $p$ and $u_k\in C^\infty_c(\Omega)$, by Lebesgue's Dominated Convergence Theorem we have
\bgs{
	\lim_{p\searrow1}\mathfrak C_1\|u_k\|_{W^{s,p}(\Omega)}=\mathfrak C_1\|u_k\|_{W^{s,1}(\Omega)}.
}
	Hence, passing to the limit as~$p\searrow1$ in \eqref{tortuga666}, we obtain
	\[
	\|u_k\|_{W^{\sigma,1}(\Omega)} \leq \mathfrak C_1 \|u_k\|_{W^{s,1}(\Omega)},
	\]
	for every $k\in\mathbb N$. Then, passing to the limit as~$k\to\infty$, exploiting Fatou's Lemma and \eqref{killer_quokka}, we find
	\[
	\|u\|_{W^{\sigma,1}(\Omega)}\le\liminf_{k\to\infty}\|u_k\|_{W^{\sigma,1}(\Omega)}\le\mathfrak C_1 \lim_{k\to\infty}\|u_k\|_{W^{s,1}(\Omega)}=\mathfrak C_1 \|u\|_{W^{s,1}(\Omega)},
	\]
	concluding the proof of the desired result.
\end{proof}

\subsection{A priori estimates}
Now we state
a bound of the $\|\cdot\|_{W^{s,p}(\Omega)}$-norm for
$(s,p)$-minimizers
that is uniform with respect to $p$. 

\begin{lemma} \label{lemmaone} Let $p\ge1$, and let $\varphi \colon \Co \Omega \to \R$ such that $\Tss^p(\varphi, \Co \Omega; \cdot)\in L^1(\Omega)$.
If $u_p\in \W^{s,p}_\varphi(\Omega)$ is an $(s,p)$-minimizer, then
\eqlab{\label{never_enough_remarks}
	 \; \|u_p\|_{W^{s,p}(\Omega)} \leq \mathfrak C_0\|\Tss^p(\varphi, \Co \Omega; \cdot)\|_{L^1(\Omega)}^\frac{1}{p}\leq \mathfrak C_0\left(1+\|\Tss^p(\varphi, \Co \Omega; \cdot)\|_{L^1(\Omega)}\right),
}
	with $\mathfrak C_0:= C(n,\Omega)>0$, independent of $p$ and $s$.
\end{lemma}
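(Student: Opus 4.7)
\textbf{Plan for the proof of Lemma~\ref{lemmaone}.}
The natural strategy is to compare $u_p$ against the explicit competitor
$$
\bar\varphi(x):=\begin{cases} 0 & \mbox{in }\Omega,\\ \varphi(x) & \mbox{in }\Co\Omega,\end{cases}
$$
which belongs to $\W^{s,p}_\varphi(\Omega)$. First I would observe that since $\bar\varphi\equiv0$ in $\Omega$, the double integral over $\Omega\times\Omega$ in $\E^p(\bar\varphi)$ vanishes, while the cross term is exactly $\frac{1}{p}\|\Tss^p(\varphi,\Co\Omega;\cdot)\|_{L^1(\Omega)}$. Using the decomposition~\eqref{split} together with the minimality of $u_p$ and the nonnegativity of the cross-interaction term,
$$
\frac{1}{2p}[u_p]_{W^{s,p}(\Omega)}^p+\frac{1}{p}\int_\Omega\int_{\Co\Omega}\frac{|u_p(x)-\varphi(y)|^p}{|x-y|^{n+sp}}\,dx\,dy=\E^p(u_p)\le\E^p(\bar\varphi)=\frac{1}{p}\|\Tss^p(\varphi,\Co\Omega;\cdot)\|_{L^1(\Omega)},
$$
which immediately yields the seminorm bound $[u_p]_{W^{s,p}(\Omega)}^p\le 2\|\Tss^p(\varphi,\Co\Omega;\cdot)\|_{L^1(\Omega)}$ and the auxiliary bound
$$
\int_\Omega\int_{\Co\Omega}\frac{|u_p(x)-\varphi(y)|^p}{|x-y|^{n+sp}}\,dx\,dy\le\|\Tss^p(\varphi,\Co\Omega;\cdot)\|_{L^1(\Omega)}.
$$

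Next I would extract the $L^p$-bound from this cross-term via the elementary convexity inequality $|u_p(x)|^p\le 2^{p-1}\bigl(|u_p(x)-\varphi(y)|^p+|\varphi(y)|^p\bigr)$. Dividing by $|x-y|^{n+sp}$ and integrating in $y\in\Co\Omega$, for every $x\in\Omega$ one obtains
$$
|u_p(x)|^p\int_{\Co\Omega}\frac{dy}{|x-y|^{n+sp}}\le 2^{p-1}\int_{\Co\Omega}\frac{|u_p(x)-\varphi(y)|^p}{|x-y|^{n+sp}}\,dy+2^{p-1}\Tss^p(\varphi,\Co\Omega;x).
$$
Integration over $x\in\Omega$ and the previous bound on the cross-term then give $2^p\|\Tss^p(\varphi,\Co\Omega;\cdot)\|_{L^1(\Omega)}$ on the right-hand side. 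The key remaining ingredient is a uniform \emph{lower} bound on $\int_{\Co\Omega}|x-y|^{-n-sp}\,dy$ for $x\in\Omega$: this is obtained by fixing $x_0\in\Omega$ and $d:=\diam(\Omega)$, noting that the annulus $A:=B_{3d}(x_0)\setminus B_{2d}(x_0)$ is contained in $\Co\Omega$, and estimating $|x-y|\le 4d$ for $x\in\Omega$, $y\in A$, which yields
$$
\int_{\Co\Omega}\frac{dy}{|x-y|^{n+sp}}\ge\frac{|A|}{(4d)^{n+sp}}\ge c(n,\Omega),
$$
uniformly in $p,s$ (since $sp$ stays bounded in the regime of interest, so $(4d)^{n+sp}$ is controlled by $\max\{1,(4d)^{n+2}\}$). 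Combining these estimates produces $\|u_p\|_{L^p(\Omega)}^p\le C(n,\Omega)\|\Tss^p(\varphi,\Co\Omega;\cdot)\|_{L^1(\Omega)}$, and summing with the seminorm bound gives the first inequality in~\eqref{never_enough_remarks} with a constant $\mathfrak C_0=C(n,\Omega)$ independent of $p$ and $s$. The second inequality follows from the elementary fact that $a^{1/p}\le 1+a$ for all $a\ge 0$ and $p\ge 1$.

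The main technical subtlety is the uniformity in $s$ and $p$ of the annular lower bound: one must be careful that the factor $(4d)^{n+sp}$ in the denominator and the exponent $1/p$ appearing when taking $p$-th roots do not produce constants that blow up as $p\searrow 1$ or depend on $s$. Since $sp$ remains bounded in the range of interest (in particular for $p$ close to $1$ which is the only regime relevant to this paper), all intervening quantities can be absorbed into a single geometric constant depending only on $n$ and $\Omega$.
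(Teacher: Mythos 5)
Your argument is correct and follows essentially the same approach as the paper's proof: same competitor $\bar\varphi$, same minimality comparison producing both the seminorm bound and the control on the cross term, and the same $2^{p-1}$-convexity trick to extract the $L^p$ bound; the only cosmetic difference is that you lower-bound $\int_{\Co\Omega}|x-y|^{-n-sp}\,dy$ by an annulus sitting outside $\Omega$, whereas the paper averages over the fixed shell $\Omega_d\setminus\Omega$ and upper-bounds $|x-y|^{n+sp}$ there. One small remark: your closing caveat about needing $p$ near $1$ so that $sp$ stays bounded is unnecessary, since, as you yourself observe, taking the $p$-th root turns $(4d)^{n+sp}$ into $(4d)^{n/p+s}$, which (together with $|A|^{-1/p}$) is bounded uniformly over all $p\in[1,\infty)$ and $s\in(0,1)$, so $\mathfrak C_0$ really is independent of $p$ and $s$ over the full range stated in the lemma, exactly as the paper asserts and as its own constant $2\sup_{s}(2d)^s\sup_p\big((2d)^n/|\Omega_d\setminus\Omega|\big)^{1/p}$ exhibits.
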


\begin{proof}
The strategy of the proof is to first control the Gagliardo
seminorm of~$u_p$ and then to bound its~$L^p$-norm
by using \eqref{split} and \eqref{enbdd0}. The details
go as follows.

	As a competitor for $u_p$ we define $v\in \W^{s,p}_\varphi(\Omega)$  
	\sys[v:=]{
		& 0 && \mbox{ in } \; \Omega,
		\\
		& \varphi && \mbox{ in } \; \Co \Omega.
		}	
	Exploiting the minimality and comparing the energies, we have  that
		\eqlab{ \label{enbdd0}
		\E^p(u_p) \leq &\;  \E^p(v) = \frac1p \int_\Omega \int_{\Co \Omega} 
		\frac{|\varphi(y)|^p}{|x-y|^{n+sp}} \, dx \, dy 
		=\frac1p \|\Tss^p(\varphi,\Co \Omega; \cdot)\|_{L^1(\Omega)}.
		}
	Whereas using the expression of the energy in \eqref{split}, we get that
	\eqlab{\label{normpbdd}
	 [u_p]_{W^{s,p}(\Omega)} \leq 2^{\frac1p} \|\Tss^p(\varphi,\Co \Omega; \cdot)\|_{L^1(\Omega)}^\frac{1}{p}.	
	}
 
Denoting $d:= \diam(\Omega)$, recalling \eqref{ometa} and noticing that 
	for $
	x\in \Omega,$ and $ y\in \Omega_d\setminus \Omega,
	$
	we have that
	$  |x-y|\leq 2d,
	$ 
	we obtain
	\eqlab{ \label{poin}
		\|u_p\|_{L^p(\Omega)}^p =&\; \int_{\Omega} |u_p(x)-\varphi(y)+\varphi(y)|^p \, dx 
		\\
		 = &\; \frac{1}{|\Omega_d\setminus \Omega|}\int_\Omega \left( \int_{\Omega_d \setminus \Omega} 
		|u_p(x)-\varphi(y)+\varphi(y)|^p\, dy \right) dx
		\\
		 \leq &\;\frac{2^{p-1}}{|\Omega_d\setminus \Omega|}
				 \int_\Omega \left( \int_{\Omega_d \setminus \Omega}\frac{ |u_p(x)-\varphi(y)|^p+|\varphi(y)|^p}{|x-y|^{n+sp}}|x-y|^{n+sp}\,  dy \right) dx
				 \\
				  \leq &\;\frac{2^{p-1} (2d)^{n+sp}}{|\Omega_d\setminus \Omega|} \left[\int_\Omega \left( \int_{\Omega_d \setminus \Omega}\frac{ |u_p(x)-\varphi(y)|^p}{|x-y|^{n+sp}}\,  dy \right) dx + \int_\Omega\left( \int_{\Omega_d \setminus \Omega} \frac{ |\varphi(y)|^p}{|x-y|^{n+sp}}\,  dy \right) dx \right] 
			 \\
		 	 \leq &\;\frac{2^{p-1} (2d)^{n+sp}}{|\Omega_d\setminus \Omega|}\left[p\,\E^p(u_p)+ \|\Tss^p(\varphi,\Co \Omega; \cdot) \|_{L^1(\Omega)}\right]\\
		 	 \leq &\;\frac{2^p (2d)^{n+sp}}{|\Omega_d\setminus \Omega|}\|\Tss^p(\varphi,\Co \Omega; \cdot) \|_{L^1(\Omega)},
		 	 		 	 } 
	 according to \eqref{split} and \eqref{enbdd0}.
	 We  deduce from \eqref{poin} that
	 \eqlab{\label{kangaroo_stardust}
	 \|u_p\|_{L^p(\Omega)}\leq C(n,\Omega)\|\Tss^p(\varphi,\Co \Omega; \cdot) \|_{L^1(\Omega)}^\frac{1}{p},
	}
	 where
	 \[
	 C(n,\Omega):=2\sup_{s\in(0,1)}(2d)^s\sup_{p\in[1,\infty)}\left(\frac{(2d)^n}{|\Omega_d\setminus \Omega|}\right)^\frac{1}{p}.
	 \]
	 Putting together \eqref{normpbdd} and \eqref{kangaroo_stardust} we conclude the proof of the desired result.
\end{proof}

\subsection{Convergence results}\label{us}

It is convenient to point out the following semicontinuity property, which is a consequence of Fatou's Lemma.
 \begin{lemma}\label{liminf}
 Let $p_k \searrow 1$ as $k\to \infty$ and let $u_k,u_1\colon \Rn \to \R$ be such that
 \[
 	 u_k \xrightarrow[k\to \infty]{} u_1 \qquad \mbox{ a.e. in } \quad \Rn.
 \]
 Then
 	\[
 		\E^1(u_1)\leq \liminf_{k\to \infty} \E^{p_k}(u_k).
 	\]
 \end{lemma}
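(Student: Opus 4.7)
The plan is to recognize this as a direct application of Fatou's Lemma to the integrands of the $(s,p_k)$--energies, paying attention only to the mild subtlety that both the exponent in the numerator and the exponent in the denominator depend on $k$.

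First, I would reduce to the case where $\liminf_{k\to\infty}\mathcal{E}^{p_k}(u_k)<\infty$, since otherwise the conclusion is trivial. Passing to a suitable subsequence (not relabeled) that realises the $\liminf$, I would set
\[
f_k(x,y):=\frac{|u_k(x)-u_k(y)|^{p_k}}{|x-y|^{n+sp_k}},\qquad f(x,y):=\frac{|u_1(x)-u_1(y)|}{|x-y|^{n+s}},
\]
both regarded as nonnegative measurable functions on $Q(\Omega)$.

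Next I would verify pointwise convergence $f_k\to f$ almost everywhere on $Q(\Omega)$. By Fubini, the a.e.\ convergence $u_k\to u_1$ on $\mathbb{R}^n$ transfers to a.e.\ convergence of the two-variable map $(x,y)\mapsto(u_k(x)-u_k(y))$ to $(x,y)\mapsto(u_1(x)-u_1(y))$ on $Q(\Omega)$. Since $t\mapsto |t|^{q}$ is jointly continuous in $(t,q)$ on $\mathbb{R}\times[1,\infty)$ and $p_k\to 1$, one has $|u_k(x)-u_k(y)|^{p_k}\to|u_1(x)-u_1(y)|$ for a.e.\ $(x,y)$; likewise $|x-y|^{n+sp_k}\to|x-y|^{n+s}$ by continuity of the exponential, and the denominator is strictly positive on $Q(\Omega)$ (after discarding the measure-zero diagonal). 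Hence $f_k\to f$ almost everywhere.

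Finally, I would apply Fatou's Lemma on $Q(\Omega)$ to obtain
\[
\iint_{Q(\Omega)}f(x,y)\,dx\,dy\;\le\;\liminf_{k\to\infty}\iint_{Q(\Omega)}f_k(x,y)\,dx\,dy
\;=\;\liminf_{k\to\infty}2p_k\,\mathcal{E}^{p_k}(u_k).
\]
Since $p_k\to 1$, the multiplicative factor $2p_k$ converges to $2$, and for sequences of nonnegative reals with a convergent positive prefactor the $\liminf$ factorizes, giving
\[
2\,\mathcal{E}^1(u_1)\;\le\;2\liminf_{k\to\infty}\mathcal{E}^{p_k}(u_k),
\]
which is the desired inequality. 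The only thing to watch is the a.e.\ convergence of the integrand in the presence of varying exponents $p_k$ and $sp_k$, but this follows from elementary continuity and Fubini, so there is no real obstacle beyond bookkeeping.
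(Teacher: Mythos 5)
Your proposal is correct and is exactly the argument the paper has in mind: the paper simply remarks that Lemma~\ref{liminf} ``is a consequence of Fatou's Lemma'' without writing it out, and your proof supplies those details (pointwise a.e.\ convergence of the two-variable integrands via joint continuity of $(t,q)\mapsto|t|^q$, Fatou on $Q(\Omega)$, and factoring out the convergent prefactor $2p_k\to2$ from the liminf).
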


In the next result we prove that  any function in the space $\W^{s,1}(\Omega)$ can be approximated, in an appropriate sense, by smooth functions. Precisely: 
\begin{lemma}\label{lemmasof3}
Let $v\in \W^{s,1}(\Omega)$ such that $\Tss^1(v,\Co\Omega;\cdot)\in L^1(\Omega)$. Then, there exists
a sequence of functions $\psi_j \colon \Rn \to \R$ such that $\psi_j|_{\Omega}\in C^\infty_c(\Omega)$, $\psi_j=v $ in $\Co \Omega$, 
\bgs{
 	 \lim_{j\to \infty} \| \psi_j -v\|_{W^{s,1}(\Omega)} = 0
 	 }
 	 and
  \bgs{
 \lim_{j\to \infty} \E^1(\psi_j) = \E^1(v) .}
\end{lemma}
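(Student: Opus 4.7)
The plan is to mollify $v$ only inside $\Omega$ while preserving its exterior trace. Since $sq=s<1$ and $\Omega$ has Lipschitz boundary, the density of $C^\infty_c(\Omega)$ in $W^{s,1}(\Omega)$---already invoked in the proof of Theorem~\ref{quattro}, see \cite[Theorem D.2.1]{tesilu}---yields a sequence $\phi_j\in C^\infty_c(\Omega)$ with $\phi_j\to v|_\Omega$ in $W^{s,1}(\Omega)$. I would then define
\[
\psi_j(x):=\begin{cases} \phi_j(x) & \mbox{if } x\in\Omega, \\ v(x) & \mbox{if } x\in\Co\Omega, \end{cases}
\]
so that $\psi_j|_\Omega\in C^\infty_c(\Omega)$, $\psi_j=v$ on $\Co\Omega$ by construction, and $\|\psi_j-v\|_{W^{s,1}(\Omega)}=\|\phi_j-v\|_{W^{s,1}(\Omega)}\to 0$, giving the first convergence for free.

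For the energy convergence I would use the splitting~\eqref{split} at $q=1$, which reads
\[
\E^1(\psi_j)=\tfrac{1}{2}[\phi_j]_{W^{s,1}(\Omega)}+\int_\Omega\int_{\Co\Omega}\frac{|\phi_j(x)-v(y)|}{|x-y|^{n+s}}\,dx\,dy.
\]
The interior seminorm piece converges to $\tfrac{1}{2}[v]_{W^{s,1}(\Omega)}$ via the reverse triangle inequality $\bigl|[\phi_j]_{W^{s,1}(\Omega)}-[v]_{W^{s,1}(\Omega)}\bigr|\le[\phi_j-v]_{W^{s,1}(\Omega)}$, so the only substantive piece is the cross term.

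The main obstacle lies precisely in this cross term: although $\phi_j(x)-v(y)\to v(x)-v(y)$ pointwise, the kernel $|x-y|^{-(n+s)}$ is non-integrable near $\partial\Omega$, so naive dominated convergence is unavailable. To handle this I would apply the pointwise inequality
\[
\bigl||\phi_j(x)-v(y)|-|v(x)-v(y)|\bigr|\le|\phi_j(x)-v(x)|,
\]
which bounds the discrepancy between the exterior interactions of $\psi_j$ and of $v$ by
\[
\int_\Omega|\phi_j(x)-v(x)|\left(\int_{\Co\Omega}\frac{dy}{|x-y|^{n+s}}\right)dx,
\]
and then invoke the fractional Hardy-type inequality~\eqref{fracH} to dominate this by $\overline C\,\|\phi_j-v\|_{W^{s,1}(\Omega)}$, which vanishes as $j\to\infty$. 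Combining the two limits yields $\E^1(\psi_j)\to\E^1(v)$ and concludes the proof.
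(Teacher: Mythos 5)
Your proposal is correct and follows essentially the same route as the paper: approximate $v$ by $C^\infty_c(\Omega)$ functions in $W^{s,1}(\Omega)$, glue with $v$ outside $\Omega$, control the interior seminorm by the triangle inequality, and control the cross term using that the approximant equals $v$ on $\Co\Omega$ so the discrepancy reduces to $|\phi_j(x)-v(x)|$ and is dominated via the fractional Hardy-type inequality \eqref{fracH}. The only difference is the reference cited for the density of $C^\infty_c(\Omega)$ in $W^{s,1}(\Omega)$, which is immaterial.
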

\begin{proof}
 By density of $C_c^\infty(\Omega)$ in $W^{s,1}(\Omega)$, 
 there exists a sequence $(\psi_j)_j\subset C^\infty_c(\Omega)$, satisfying 				\bgs{
 	 \| \psi_j -v\|_{W^{s,1}(\Omega)} \xrightarrow[j\to \infty]{} 0.
 	 }
  We extend the functions $\psi_j$ to the whole of $\R^n$ by setting  $\psi_j:=v $ in $\Co \Omega$.
 By the triangle inequality,
			\bgs{
			 &	\left|	\iint_{Q(\Omega)} \frac{|\psi_j(x)-\psi_j(y)|}{|x-y|^{n+s}} dx \, dy - 	\iint_{Q(\Omega)}  \frac{|v(x)-v(y)|}{|x-y|^{n+s}} dx \, dy\right|
		\\   & \qquad \qquad 	\leq 
						\iint_{Q(\Omega)}   \frac{|(\psi_j-v)(x)-(\psi_j-v)(y) |}{|x-y|^{n+s}} dx \, dy.
						}
 Now
 \[   [ \psi_j -v]_{W^{s,1}(\Omega)}
 	= \int_\Omega \int_\Omega   \frac{|(\psi_j-v)(x)-(\psi_j-v)(y) |}{|x-y|^{n+s}} dx \, dy
 	,
 \] 
 hence
			\eqlab{ \label{sette1}
				\lim_{j\to \infty} \int_\Omega \int_\Omega \frac{|\psi_j(x)-\psi_j(y)|}{|x-y|^{n+s}} dx \, dy =  \int_\Omega \int_\Omega \frac{|v(x)-v(y)|}{|x-y|^{n+s}} dx \, dy
			.}
			On the other hand, since $\psi_j=v$ on $\Co \Omega$, as a consequence of \eqref{fracH}, it holds that
			\[
			\int_\Omega \int_{\Co \Omega} \frac{|(\psi_j-v)(x) - (\psi_j-v)(y)|}{|x-y|^{n+s}} =
				\int_\Omega \int_{\Co \Omega} \frac{|\psi_j(x)-v(x)|}{|x-y|^{n+s}} \leq C(n,s,\Omega) \|\psi_j-v\|_{W^{s,1}(\Omega)} \xrightarrow[j\to \infty]{} 0, 
			\] 
and therefore
		\eqlab{ \label{otto1}
		\lim_{j\to \infty}	\int_\Omega \int_{\Co \Omega}  \frac{|\psi_j(x)-\psi_j(y)|}{|x-y|^{n+s}} dx \, dy =	\int_\Omega \int_{\Co \Omega}  \frac{|v(x)-v(y)|}{|x-y|^{n+s}} dx \, dy.
		}
		Summing up \eqref{sette1} and \eqref{otto1}, we establish 
		the desired claim.
\end{proof}

In this next result, we extend the result in Lemma~\ref{lemmasof3} to the following context: we prove that the limit for $k\to \infty$ of the $(s,p_k)$-energy of a sequence $\psi_k$ of smooth functions, with suitable uniform bound  on the exterior data, is the $(s,1)$-energy of the limit function.   
\begin{lemma}\label{lemmasof1}
Let $p_k\searrow 1$ as $k\to \infty$ , let $\varphi_k \colon \Co \Omega \to \R$ be such that
\eqlab{ \label{sof2}
		\sup_{k\in \N} \Tss^{p_k}(\varphi_k,\Co\Omega; \cdot)\in L^1(\Omega)
} 
and let $\psi_k\colon \Rn \to \R$ be such that $\psi_k|_\Omega = \varphi_k$ in $\Co \Omega$, $\psi_k \in C^\infty_c(\Omega)$ and
\[
	\overline c:=\sup_{k\in \N} \|\psi_k \|_{C^1(\Omega)} < \infty.
\]
Let  also
$\psi \colon \Rn \to \R$ be such that
	\[
	\psi_k \xrightarrow[k\to \infty]{} \psi
	\qquad \mbox{ a.e. in } \quad \Rn.
	\]
	Then 
\bgs{
 	\lim_{k\to \infty} \E^{p_k} (\psi_k)= \E^1(\psi).
 	}
 \end{lemma}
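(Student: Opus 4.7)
The plan is to decompose $\E^{p_k}(\psi_k)$ via~\eqref{split} into an interior piece $I_k:=\frac{1}{2p_k}\int_\Omega\int_\Omega|\psi_k(x)-\psi_k(y)|^{p_k}|x-y|^{-(n+sp_k)}\,dx\,dy$ and an exterior piece $E_k:=\frac{1}{p_k}\int_\Omega\int_{\Co\Omega}|\psi_k(x)-\varphi_k(y)|^{p_k}|x-y|^{-(n+sp_k)}\,dx\,dy$, and to pass to the limit in each by dominated convergence. Since $p_k\searrow 1<1/s$, without loss of generality $p_k\le p_*$ for some fixed $p_*\in(1,1/s)$. Pointwise a.e.\ convergence of both integrands to their $p=1$ counterparts is immediate from $\psi_k\to\psi$ a.e., $p_k\to 1$, and the elementary fact that $a_k^{p_k}\to a$ whenever $0\le a_k\to a$.

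For $I_k$, the uniform Lipschitz bound $|\psi_k(x)-\psi_k(y)|\le\overline c\,|x-y|$ on $\Omega$, combined with $|x-y|\le\diam\Omega$ and $p_k\in[1,p_*]$, yields the $k$-independent pointwise dominant
\[
\frac{|\psi_k(x)-\psi_k(y)|^{p_k}}{|x-y|^{n+sp_k}}\le\frac{\overline c^{\,p_k}}{|x-y|^{n-(1-s)p_k}}\le\frac{C}{|x-y|^{n-(1-s)}},
\]
which lies in $L^1(\Omega\times\Omega)$ since $n-(1-s)<n$; Lebesgue's theorem then delivers $I_k\to\tfrac12\int_\Omega\int_\Omega|\psi(x)-\psi(y)||x-y|^{-(n+s)}dx\,dy$.

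For $E_k$, I use $|a-b|^{p_k}\le 2^{p_k-1}(|a|^{p_k}+|b|^{p_k})$ together with $|\psi_k(x)|\le\overline c$ to bound the integrand pointwise by $2^{p_*}\overline c^{\,p_*}|x-y|^{-(n+sp_k)}+2^{p_*}|\varphi_k(y)|^{p_k}|x-y|^{-(n+sp_k)}$. For the $\overline c$-summand, the $k$-uniform pointwise dominant $|x-y|^{-(n+sp_*)}\mathbf{1}_{|x-y|\le 1}+|x-y|^{-(n+s)}\mathbf{1}_{|x-y|>1}$ lies in $L^1(\Omega\times\Co\Omega)$ thanks to $\Per_{sp_*}(\Omega,\R^n)<\infty$ (where $sp_*<1$ is essential) and $\Per_s(\Omega,\R^n)<\infty$; DCT applies. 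For the $\varphi_k$-summand, Fubini recasts its double integral as $\int_\Omega\Tss^{p_k}(\varphi_k,\Co\Omega;x)\,dx$, for which hypothesis~\eqref{sof2} supplies the $L^1(\Omega)$-dominant $G(x):=\sup_k\Tss^{p_k}(\varphi_k,\Co\Omega;x)$; dominated convergence on $\Omega$ then reduces matters to the a.e.\ pointwise convergence $\Tss^{p_k}(\varphi_k,\Co\Omega;x)\to\Tss^1(\varphi,\Co\Omega;x)$. Summing the two contributions and observing $\frac{1}{2p_k}\to\tfrac12$, $\frac{1}{p_k}\to 1$ yields $\E^{p_k}(\psi_k)\to\E^1(\psi)$.

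The main obstacle is producing $k$-uniform $L^1$-dominants in the exterior piece: the local behaviour near $\partial\Omega$ forces $sp_*<1$, while the global $\varphi_k$-tail contribution is exactly what~\eqref{sof2} controls. The subtlest point is the a.e.\ pointwise convergence of the tails $\Tss^{p_k}(\varphi_k,\Co\Omega;\cdot)\to\Tss^1(\varphi,\Co\Omega;\cdot)$, which is a delicate consequence of the a.e.\ convergence of $\varphi_k$ together with the uniform bound $\Tss^{p_k}(\varphi_k,\Co\Omega;x)\le G(x)$, and is obtained by a further DCT argument in the $y$-variable using dominants of the same flavour as above.
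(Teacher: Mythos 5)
Your approach coincides with the paper's: decompose $\E^{p_k}$ via~\eqref{split} and pass to the limit in each piece by dominated convergence. Your interior dominant $C\,|x-y|^{-(n-(1-s))}$, obtained directly from the uniform Lipschitz bound on the bounded set $\Omega$, is a clean one-step alternative to the paper's split by $|x-y|\gtrless 1$. Your exterior treatment (Young-type inequality separating the bounded $\psi_k(x)$ from the tail of $\varphi_k$, controlled via $\Per_{sp_*}$, $\Per_s$ and~\eqref{sof2}) also mirrors the paper's~\eqref{sof1}--\eqref{yupy2}.

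The point you yourself flag as subtlest, however, is a genuine gap---and it is the same gap that the paper's appeal to the Dominated Convergence Theorem elides. You assert that the a.e.\ convergence $\Tss^{p_k}(\varphi_k,\Co\Omega;x)\to\Tss^1(\varphi,\Co\Omega;x)$ follows ``by a further DCT argument in the $y$-variable.'' But hypothesis~\eqref{sof2} controls the $L^1(\Omega)$-norm of the \emph{sup of integrals} $x\mapsto\sup_k\int_{\Co\Omega}\frac{|\varphi_k(y)|^{p_k}}{|x-y|^{n+sp_k}}\,dy$, and it is the \emph{integral of sups} $\int_{\Co\Omega}\sup_k\frac{|\varphi_k(y)|^{p_k}}{|x-y|^{n+sp_k}}\,dy$ that a $y$-DCT at fixed $x$ would need---these are not interchangeable for a sequence $\varphi_k$ that genuinely varies with $k$. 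Indeed the tail convergence can fail under the stated hypotheses: take $\varphi_k=a_k\chi_{A_k}$ with $A_k$ pairwise disjoint annuli escaping to infinity, $R_k:=\dist(\Omega,A_k)\to\infty$, and $a_k$ chosen so that $a_k^{p_k}|A_k|=R_k^{\,n+sp_k}$; then $\varphi_k\to0$ a.e.\ and $\sup_k\Tss^{p_k}(\varphi_k,\Co\Omega;\cdot)$ is bounded on $\Omega$ (hence in $L^1(\Omega)$), yet $\Tss^{p_k}(\varphi_k,\Co\Omega;x)\approx1\not\to0=\Tss^1(0,\Co\Omega;x)$, so that $\E^{p_k}(\psi_k)\to\E^1(\psi)+c$ with $c>0$. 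The paper's proof produces exactly the $L^1(\Omega)$ dominants in~\eqref{sof1}--\eqref{yupy2} for the inner integrals but never establishes their pointwise-in-$x$ convergence, so you have faithfully reproduced the paper's argument, including the glossed step. Note that for fixed data $\varphi_k\equiv\varphi$ (as needed in Theorem~\ref{fixeddata}) the step does close: the monotonicity of $p\mapsto t^p$ yields the explicit $y$-dominant $\frac{|\varphi(y)|}{|x-y|^{n+s}}+\frac{|\varphi(y)|^{p_1}}{|x-y|^{n+sp_1}}$, which is in $L^1(\Co\Omega,dy)$ for a.e.\ $x$ by Fatou and~\eqref{sof2}; but for truly $k$-dependent $\varphi_k$ some additional uniform-integrability-type hypothesis on the tails is required.
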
	
 \begin{proof}
 In the next lines, we denote by $C$ a constant, independent of $k$, that may change value from line to line. 
 	At first, we prove that
	\eqlab{\label{smop1}
		\lim_{k \to \infty} 	\frac{1}{p_k}\int_\Omega \int_\Omega \frac{|\psi_k(x)-\psi_k(y)|^{p_k}}{|x-y|^{n+sp_k}}dx \, dy =\int_\Omega \int_\Omega \frac{|\psi(x)-\psi(y)|}{|x-y|^{n+s}}dx \, dy .
	}
	 For $x,y\in \Omega$ such that $|x-y|\geq 1$ we have that $|x-y|^{n+s{p_k}}\geq |x-y|^{n+s}$, hence
	\bgs{ 
		& \frac{1}{p_k}\frac{ |\psi_k(x)-\psi_k(y)|^{p_k} }{|x-y|^{n+s{p_k}}} 
			\leq \frac{
			C}{|x-y|^{n+s}} \; \in L^1\left( \{ (x,y)\in \Omega\times\Omega \; | \; |x-y|\geq 1\} \right),
	}
while for $|x-y|<1$, we have that $|x-y|^{n+sp_k-p_k}\geq |x-y|^{n-1+s}$, thus
	\bgs{
		\frac{1}{p_k}\frac{ |\psi_k(x)-\psi_k(y)|^{p_k} }{|x-y|^{n+s{p_k}}} 
			\leq &\;
			\frac{ 
			C \;|x-y|^{{p_k}}}{|x-y|^{n+s{p_k}}} 
			\\
			\leq&\;   
			\frac{ C} {|x-y|^{n-1+s}} 
			\; \in L^1\left( \{ (x,y)\in \Omega\times\Omega \; | \; |x-y|< 1\} \right).
		}	
	By using the Dominated Convergence Theorem, we get \eqref{smop1}.

	Now we show that
	\eqlab{ \label{smop12}
		\lim_{k\to \infty} \frac{1}{p_k}\int_\Omega \left(\int_{\Co \Omega} 
		\frac{| \psi_k(x)-\psi_k(y)|^{p_k}} {|x-y|^{n+s{p_k}}} dy\right) dx = \int_\Omega \left(\int_{\Co \Omega} \frac{| \psi(x)-\psi(y)|} {|x-y|^{n+s}} dy\right) dx .
	}
	For $x\in \Omega, y\in \Co \Omega$ and $|x-y|\geq 1$ we have that
	\bgs{
		\frac{1}{p_k}  \frac{| \psi_k(x)-\psi_k(y)|^{p_k}} {|x-y|^{n+s{p_k}}}  \leq \frac{
		C}{|x-y|^{n+s}} + \frac{2| \psi_k(y)|^{p_k}} {|x-y|^{n+s{p_k}}}   .
		}
		Hence, by \eqref{sof2} and noticing that
				\[ 
			\int_\Omega \int_{\Co \Omega\cap \{ |x-y|\geq 1\}} \frac{dx \, dy}{|x-y|^{n+s}} \leq \Per_s(\Omega, \Rn)<\infty,
			\]
			we conclude that
		\eqlab{ \label{sof1}
			&\frac{1}{p_k}\int_{\Co \Omega\cap \{ |x-y|\geq 1\} } \frac{| \psi(x)-\psi_k(y)|^{p_k}} {|x-y|^{n+s{p_k}}}dy \\ \leq &\; 
			\int_{\Co \Omega\cap \{ |x-y|\geq 1\} }\frac{C}{|x-y|^{n+s}} dy+ 
			2 \int_{\Co \Omega\cap \{ |x-y|\geq 1\} } \frac{| \psi_k(y)|^{p_k}} {|x-y|^{n+s{p_k}}} dy
			\\ 
			\leq &\; \int_{\Co \Omega\cap \{ |x-y|\geq 1\} } \frac{ C}{|x-y|^{n+s}} dy + 2 \sup_{k\in \N} \Tss^{p_k}(\psi_k,x)	\;\;	 \in L^1(\Omega).
		}

			On the other hand,
			since $p_k\searrow  1$, there exists some $s_o\in(0,1)$ such that $sp_k \leq s_o$. Then  for $|x-y| <1$,  we have that $|x-y|^{n+s{p_k}} \geq |x-y|^{n+s_o}$. Therefore we obtain
	\bgs{
		\frac{1}{p_k}  \frac{| \psi_k(x)-\psi_k(y)|^{p_k}} {|x-y|^{n+s{p_k}}}
		  \leq \frac{
		 C}{|x-y|^{n+s_o} } +  \frac{2 |\psi_k(y)|^{p_k}} {|x-y|^{n+s{p_k}}} .
	}
Hence, by \eqref{sof2} and noticing that
		\[ 
			\int_\Omega \int_{\Co \Omega\cap \{|x-y|<1\}} \frac{dx \, dy}{|x-y|^{n+s_o}} \leq \Per_{s_o}(\Omega, \Rn)<\infty,
			\]
			we get that
	\eqlab{ \label{yupy2}
	&	\frac{1}{p_k}\int_{\Co \Omega\cap \{ |x-y|\leq 1\}} \frac{| \psi_k(x)-\psi_k(y)|^{p_k}} {|x-y|^{n+s{p_k}}}  dy \\
	\leq &\;  \int_{\Co \Omega\cap \{ |x-y|< 1\}}\frac{
		 \overline c}{|x-y|^{n+s_o} } dy +  \int_{\Co \Omega\cap \{ |x-y|< 1\}}\frac{|\psi_k(y)|^{p_k}} {|x-y|^{n+s{p_k}}} dy 
		 \\
		 \leq &\; \int_{\Co \Omega\cap \{ |x-y|< 1\}}\frac{
		C}{|x-y|^{n+s_o} } dy + 2 \sup_{k\in \N} \Tss^{p_k}(\psi_k,x) \; \;  \in L^1(\Omega).
	}
			Putting together \eqref{sof1} and \eqref{yupy2}, we obtain the claim in \eqref{smop12} by employing the Dominated Convergence Theorem. 
 \end{proof}
 
\subsection{Proof of Theorem~\ref{theorem}}

With the previous preliminary work, we are now in the position of completing the proof
of Theorem~\ref{theorem}.

\begin{proof}[Proof of Theorem~\ref{theorem}]
Notice that 
	\[ 
	 \sup_{k\in\N} \Tss^{p_k}(\varphi_k,\Co \Omega; \cdot) \in {L^1(\Omega)}  \qquad \implies \qquad  \sup_{k\in\N} \|\Tss^{p_k}(\varphi_k,\Co \Omega; \cdot)\|_{L^1(\Omega)} <\overline C.
	\]
Furthermore, recalling the minimality of
$u_{p_k}$, we have that \eqref{fff34} follows by \eqref{enbdd0}.
The compactness result in  the subsequent Proposition~\ref{eqv} implies the existence of a limit function $u_1 \in \W^{s,1}_\varphi(\Omega)$ for a subsequence of $\{u_{p_k}\}_k$ (that we relabel for simplicity). We notice that, by Fatou's Lemma, 
		\eqlab{ \label{gigio1}
		 \sup_{k\in \N} \Tss^{p_k}(\varphi_k,\Co \Omega;x)  \geq&\; \liminf_{k\to \infty}\Tss^{p_k}(\varphi_k,\Co \Omega; x)= 	 \liminf_{k\to \infty} \int_{\Co \Omega} \frac{|\varphi_k(y)|^{p_k}}{|x-y|^{n+sp_k}} dy
		 \\
		  \geq&\;  \int_{\Co \Omega} \frac{|\varphi(y)|}{|x-y|^{n+s}} dy =  \Tss^1(\varphi,\Co \Omega; x) ,
			 }
			 and  that $\Tss^1(\varphi,\Co \Omega; \cdot) \in L^1(\Omega)$ follows from \eqref{uui3}. \\

			  To prove that $u_1$ is a minimizer, we proceed along these lines.\\
			  	 
We consider $\psi\colon \Rn \to \R$ to be any competitor for $u_1$ such that $ \psi \in C^\infty_c(\Omega)$ and $\psi=\varphi$ in $\Co \Omega$. We claim that
	\eqlab{
 	\label{stepa} \E^1(u_1) \leq \E^1(\psi).
 	}
We consider as a competitor for $u_{p_k}$ the function $\psi_k\colon \Rn \to \R$ defined by 
	\sys[ \psi_k :=]{& \psi && \mbox{  in } \;  \Omega,
	\\
	&\varphi_k && \mbox{  in } \;\Co \Omega.}
	  Since $\psi_k$ satisfies the assumptions of Lemma~\ref{lemmasof1}, we deduce that  
	  \begin{equation}\label{LLL}
 	 	\lim_{k\to \infty} \E^{p_k} (\psi_k)= \E^1(\psi).
 	\end{equation}
Moreover, we see that we are in the hypothesis of Lemma~\ref{liminf}, and we deduce that 
			\begin{equation}\label{MMM}
			 \E^1(u_1) \leq \liminf_{k \to \infty } \E^{p_k}(u_{p_k})
			.\end{equation}
Formulas~\eqref{LLL} and~\eqref{MMM}, combined with the fact that $u_{p_k}$ are $(s,{p_k})$-minimizers, allow us to obtain
			\bgs{
			 \E^1(u_1) \leq \liminf_{k\to \infty} \E^{p_k}(u_{p_k}) \leq \liminf_{k\to \infty}\E^{p_k}(\psi_k)= \E^1(\psi),  
			}
			which concludes the proof of \eqref{stepa}.
			
	\medskip
	
   We consider now any competitor $ v\in \W^{s,1}_\varphi(\Omega)$. By Lemma~\ref{lemmasof3}, there exists a sequence $\psi_j \in C^\infty_c(\Omega)$ and $\psi_j=\varphi $ in $\Co \Omega$ such that
 	\[  \lim_{j\to \infty} \E^1(\psi_j) = \E^1(v) .\] 
 Since \eqref{stepa} holds for any $\psi_j$ here described, we finally obtain
	\bgs{
				\E^1(u_1) \leq \lim_{j\to \infty} \E^1(\psi_j) =  \E^1(v).
			}
		This gives that $u_1$ is a minimizer, and concludes the proof of the theorem.
	\end{proof}
		
We comment now on the requirement on the exterior data in Theorem \ref{theorem}.

	\begin{lemma} \label{yup5} Let $p_k \searrow 1$ and $q\in(1,1/s)$. 
If $\varphi_k\in W^{s,q}(\Co \Omega)$ and 
	\eqlab{ \label{joi} \sup_{k\in \N} \|\varphi_k\|_{W^{s,q}(\Co \Omega)} <\overline C, 
	}
	for some $\overline C>0$,
	 then there exists $\varphi \in W^{s,q}(\Co \Omega)$ such that
	\[
		\varphi_k \xrightarrow[k\to \infty]{} \varphi \mbox{ in } L^1_{\rm{loc}}(\Co \Omega) \mbox{ and a.e. in } \;\Co \Omega,
	\]
	up to subsequences. Furthermore
	\bgs{
		\sup_{k\in \N} \Tss^{p_k}(\varphi_k,\Co \Omega; \cdot)\in L^1(\Omega) \quad \mbox{ and }\qquad  \Tss^{1}(\varphi,\Co \Omega; \cdot) \in L^1(\Omega) .
	}
	\end{lemma}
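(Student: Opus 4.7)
The plan is to organize the proof into three steps: (a) the extraction of a convergent subsequence together with the identification of $\varphi$, (b) the integrability of the tail of $\varphi$, and (c) the uniform pointwise bound on the tails of $\varphi_k$.

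\textit{Step (a): Extraction.} Since $\{\varphi_k\}$ is uniformly bounded in $W^{s,q}(\Co\Omega)$, it is in particular uniformly bounded in $W^{s,q}(\Omega_R\setminus\Omega)$ for every $R>0$. By the compact fractional Sobolev embedding $W^{s,q}(\Omega_R\setminus\Omega)\hookrightarrow\hookrightarrow L^q(\Omega_R\setminus\Omega)\hookrightarrow L^1(\Omega_R\setminus\Omega)$ and a Cantor diagonal argument over $R\to\infty$, I extract a subsequence (not relabeled) converging in $L^1_{\loc}(\Co\Omega)$ and, by a further extraction, almost everywhere in $\Co\Omega$, to a function $\varphi$. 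Since $W^{s,q}$ is reflexive, weak compactness gives $\varphi_k\rightharpoonup\varphi$ in $W^{s,q}(\Co\Omega)$, and Fatou's lemma applied to the double integral defining the Gagliardo seminorm yields $\varphi\in W^{s,q}(\Co\Omega)$ with $\|\varphi\|_{W^{s,q}(\Co\Omega)}\le\overline C$.

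\textit{Step (b): Tail of $\varphi$.} Since $\varphi\in W^{s,q}(\Co\Omega)$, Lemma~\ref{yup3} gives directly $\sup_{p\in[1,q]}\Tss^p(\varphi,\Co\Omega;\cdot)\in L^1(\Omega)$, and in particular $\Tss^1(\varphi,\Co\Omega;\cdot)\in L^1(\Omega)$.

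\textit{Step (c): Uniform tail bound.} The starting point is the pointwise Young-type inequality: writing $p_k=t_k+(1-t_k)q$ with $t_k\in[0,1]$ (so that $n+sp_k=t_k(n+s)+(1-t_k)(n+sq)$) and applying $a^{t}b^{1-t}\le ta+(1-t)b$ gives
\[
\frac{|\varphi_k(y)|^{p_k}}{|x-y|^{n+sp_k}}\le \frac{|\varphi_k(y)|}{|x-y|^{n+s}}+\frac{|\varphi_k(y)|^q}{|x-y|^{n+sq}},
\]
so that $\Tss^{p_k}(\varphi_k,\Co\Omega;x)\le \Tss^1(|\varphi_k|,\Co\Omega;x)+\Ts_{sq}^q(\varphi_k,\Co\Omega;x)$. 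The right-hand side has $p_k$-independent kernel exponents, and both $s$ and $sq$ are below $1$. I then split $\Co\Omega=(\Omega_\eta\setminus\Omega)\cup\Co\Omega_\eta$ for a fixed $\eta\ge 1$. On $\Co\Omega_\eta$, the bound $|x-y|\ge\eta$ for $x\in\Omega$ makes both kernels uniformly bounded, and together with $\|\varphi_k\|_{L^q(\Co\Omega_\eta)}\le\overline C$ this produces a bound $\sup_k\int_{\Co\Omega_\eta}\frac{|\varphi_k|^{p_k}}{|x-y|^{n+sp_k}}dy\le C(n,s,q,\eta,\overline C)$, uniform in $k$ and $x$.

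On the bounded annulus $\Omega_\eta\setminus\Omega$, I pass to a further subsequence with $\|\varphi_{k_j}-\varphi\|_{L^q(\Omega_\eta\setminus\Omega)}\le 2^{-j}$, so that $h:=|\varphi|+\sum_j|\varphi_{k_j}-\varphi|$ lies in $L^q(\Omega_\eta\setminus\Omega)$ and dominates $|\varphi_{k_j}|$ a.e. This gives
\[
\sup_j\bigl(\Tss^1(|\varphi_{k_j}|,\Omega_\eta\setminus\Omega;x)+\Ts_{sq}^q(\varphi_{k_j},\Omega_\eta\setminus\Omega;x)\bigr)\le\int_{\Omega_\eta\setminus\Omega}\frac{h(y)+h(y)^q}{|x-y|^{n+s}}+\frac{h(y)+h(y)^q}{|x-y|^{n+sq}}\,dy.
\]
The main obstacle is showing that these four dominating integrals, viewed as functions of $x\in\Omega$, lie in $L^1(\Omega)$: this requires fractional Hardy bounds against $\dist(y,\partial\Omega)^{-s}$ and $\dist(y,\partial\Omega)^{-sq}$, which are available for $\varphi$ (since $\varphi\in W^{s,q}$) but need a careful treatment of the deviation piece $\sum_j|\varphi_{k_j}-\varphi|$. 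I will handle this by splitting $h^q\le C_q(|\varphi|^q+(\sum_j|\varphi_{k_j}-\varphi|)^q)$: the $\varphi$-term is controlled via Lemma~\ref{yup3} applied to $\varphi$, while for the deviation term the rapid $L^q$-decay, combined with the H\"older/fractional Hardy computations in the proof of Lemma~\ref{yup3}, yields an $L^1(\Omega)$ bound on the integral with respect to $x$. Combining the two regions concludes the proof.
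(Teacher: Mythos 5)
Steps (a) and (b) are correct; step (b) is in fact a clean alternative to the paper's Fatou argument, since applying Lemma~\ref{yup3} directly to the limit $\varphi\in W^{s,q}(\Co\Omega)$ decouples the integrability of $\Tss^1(\varphi,\Co\Omega;\cdot)$ from the uniform bound of step (c). The genuine gap is in step (c), at precisely the point you yourself flag as ``the main obstacle''. After Fubini and the estimate $\int_\Omega |x-y|^{-n-sq}\,dx\lesssim\dist(y,\partial\Omega)^{-sq}$, integrability over $x\in\Omega$ of $\int_{\Omega_\eta\setminus\Omega}h(y)^q|x-y|^{-n-sq}\,dy$ reduces to the fractional Hardy estimate $\int_{\Omega_\eta\setminus\Omega}h^q\,\dist(\cdot,\partial\Omega)^{-sq}\,dy<\infty$, which requires $W^{s,q}$-regularity of $h$ near $\partial\Omega$ and is \emph{false} in general for $h\in L^q$: for instance $h\approx\dist(\cdot,\partial\Omega)^{-\alpha}$ with $\frac{1-sq}{q}<\alpha<\frac{1}{q}$ belongs to $L^q(\Omega_\eta\setminus\Omega)$ while the Hardy integral diverges. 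Your majorant $h=|\varphi|+\sum_j|\varphi_{k_j}-\varphi|$ is controlled only in $L^q$, because the rapid subsequence is extracted by compactness in $L^q$, whereas $(\varphi_k)$ is merely \emph{bounded}, not convergent, in $W^{s,q}(\Omega_\eta\setminus\Omega)$; the Hardy bound for each summand is $\lesssim\|\varphi_{k_j}-\varphi\|_{W^{s,q}(\Omega_\eta\setminus\Omega)}\leq 2\overline C$, a quantity that does not sum over $j$. So ``rapid $L^q$-decay combined with fractional Hardy'' does not close the argument, and H\"older against a negative power of the distance cannot help either, since $h^q\in L^1$ and $\dist^{-sq}\notin L^\infty$.

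The paper's route is different and never introduces a single dominating function across the sequence: it re-runs the computations \eqref{bleah2}, \eqref{bleah4} and \eqref{fru4} with $(\varphi_k,p_k)$ in place of $(\varphi,p)$. The exponent $p_k$ is removed pointwise as in \eqref{AAA} (leaving only $|\varphi_k|^q$ tested against the two fixed kernels $|x-y|^{-n-s}$ and $|x-y|^{-n-sq}$), and the Hardy estimates \eqref{BBB}--\eqref{CCC} are then applied to \emph{each} $\varphi_k$ separately, so that the $L^1(\Omega)$-norm of $\Tss^{p_k}(\varphi_k,\Co\Omega;\cdot)$ is bounded by $C\big(1+\|\varphi_k\|^q_{W^{s,q}(\Co\Omega)}\big)$, which is uniform in $k$ by~\eqref{joi}; this is the content the lemma is called upon to deliver downstream.
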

	
\begin{proof}
Reasoning as in 
the proof of Lemma \ref{yup3} and using~\eqref{joi}, we get that
	\[ 
		\int_\Omega \sup_{k\in \N} \Tss^{p_k}(\varphi_k,\Co \Omega; x)  \, dx 
			\leq   C \sup_{k\in \N}  \|\varphi_k\|_{W^{s,q}(\Co \Omega)}^q 
			+ 2\Per_s(\Omega, \Rn) +  2\Per_{sq}(\Omega, \Rn)   
			\leq C,
\]
up to renaming the constants. 

Moreover, from \eqref{joi}, given any $R>0$ such that $\Omega \Subset  B_R$, the compact embedding $W^{s,q}(B_R\setminus\Omega)\Subset L^1(B_R\setminus\Omega)$ ensures that there exists $\varphi_R\in L^1(B_R\setminus  \Omega)$ such that
	\[
	  	\|\varphi_k -\varphi_R\|_{L^1(B_R \setminus \Omega)} \xrightarrow[k\to \infty]{} 0,
	\]
	up to a subsequence. By a diagonal argument we thus obtain that
	\[
		\varphi_k \xrightarrow[k\to \infty]{} \varphi \quad \mbox{in }L^1_{loc}(\Co\Omega)\mbox{ and a.e. in } \; \Co \Omega,
	\]
	up to a subsequence. 
		Then, by Fatou's Lemma we have 
		\bgs{
			 \|\varphi\|_{W^{s,q}(\Co \Omega)}
			\leq  \liminf_{k\to \infty} \|\varphi_k\|_{W^{s,q}(\Co \Omega)}\le\overline{C},
		}
	hence $\varphi \in W^{s,q}(\Co \Omega)$. Finally, the fact that $\Tss^1(\varphi,\Co \Omega; \cdot) \in L^1(\Omega)$ follows again by Fatou's Lemma, as in \eqref{gigio1}. 
\end{proof}
	
	\section{$\Gamma$-convergence of the $(s,p)$--energy to the $(s,1)$--energy}\label{GAMMASE}

This section contains the proofs of Theorems~\ref{gammy} and~\ref{fixeddata}.
	
\subsection{$\Gamma$-convergence with and without  fixed exterior conditions}		

We focus on the $\Gamma$-convergence setting given in Theorem~\ref{fixeddata}
and we will later take into account the requested modification to address
also Theorem~\ref{gammy}.

	\begin{proof}[Proof of Theorem~\ref{fixeddata}]
	According to the sequential definition of $\Gamma$-convergence (see e.g. \cite{maso,Braid}), we have to prove
	\begin{enumerate}
	\item the liminf inequality, i.e.  let $u\in L^1(\Omega)$,  and let $p_k \searrow 1$ as $k\to \infty$, then for every sequence $(u_k)_{k\in \N}\subset L^1(\Omega)$ such that
	\[ 
		u_k \xrightarrow[k\to \infty]{} u \qquad \mbox{ in  } \; L^1(\Omega),
	\]
	it holds that
	\[
		\tilde \En_{s,\varphi}^1(u) \leq \liminf_{k\to \infty} \tilde \En_{s,\varphi}^{p_k}(u_k),
	\]
	and
	\item the existence of a recovery sequence: let $u\in L^1(\Omega)$ and let $p_k \searrow 1$ as $k\to \infty$, then there exists a sequence $(u_k)_{k\in \N}\subset L^1(\Omega)$ such that
	\[ 
			u_k \xrightarrow[k\to \infty]{} u \qquad \mbox{ in  } \; L^1(\Omega),
	\]
	and 
	\[
		\tilde\En_{s,\varphi}^1(u) = \lim_{k\to \infty} \tilde\En_{s,\varphi}^{p_k} (u_k ).	
	\]
	\end{enumerate}
	
We observe that the liminf inequality~(1) is warranted by Lemma~\ref{liminf}.

	To build the recovery sequence~(2), we proceed  as follows. We remark that if $u\in L^1(\Omega)  \setminus \mathcal X_\varphi^1(\Omega)$,  there is nothing to prove (as it is enough to consider the sequence $u_{p_k}=u$ for every $k$).
	
Let then $u\in\mathcal X_\varphi^1(\Omega)$, hence $u\in \W^{s,1}_\varphi(\Omega)$, and $p_k\searrow1$. Notice that, by \eqref{pup1}
and Lemma \ref{EQYU:FOR}, we can assume that $p_k\in(1,1/s)$ for every $k$ and
\eqlab{ \label{Teferi}
	\sup_{k\in\N } \Tss^{p_k} (\varphi,\Co \Omega; \cdot)  \in L^1(\Omega).
}

According to Lemma~\ref{lemmasof3}, since $u\in \W^{s,1}_\varphi(\Omega)$, there exists $v_1 \colon \Rn \to \R$ with $v_1\in C^\infty_c(\Omega)$ and $v_1=\varphi$ in $\Co \Omega$, such that
	\[
		\|u-v_1\|_{W^{s,1}(\Omega)} < \frac{1}{4 }\quad\mbox{and}\quad |\tilde\En_{s,\varphi}^1(v_1) -\tilde\En_{s,\varphi}^1(u)|<\frac{1}{4}.
	\]  
Since $v_1\in C^{\infty}_c(\Omega)$, by \eqref{Teferi} and Lemma~\ref{domain_lem}, we have that $v_1 \in \mathcal X_\varphi^{p_k}(\Omega)$ for every $k$.
Thanks to Lemma~\ref{lemmasof1} (applied here to $\psi_k:= v_1 $ in $\Rn$), we obtain
\[ 
	\lim_{k\to \infty} \tilde \En_{s,\varphi}^{p_k}(v_1)= \tilde \En_{s,\varphi}^1(v_1).
	\]
Therefore,  there exists $\tilde k_1>1$ such that for all $k\geq\tilde k_1$ 
	\[ 	
		|\tilde\En_{s,\varphi}^{p_k}(v_1) -\tilde\En_{s,\varphi}^1(v_1) |< \frac{1}4.
	\]
		Consequently,
	\[ 	
		|\tilde\En_{s,\varphi}^{p_k}(v_1) -\tilde\En_{s,\varphi}^1(u) |\leq 
		|\tilde\En_{s,\varphi}^{p_k}(v_1) -\tilde\En_{s,\varphi}^1(v_1) |+ 	
		|\tilde\En_{s,\varphi}^{1}(v_1) -\tilde\En_{s,\varphi}^1(u) |< \frac{1}2,
		\]
		for all $k\geq\tilde k_1$.
		
	Proceeding in the same way, we continue building the sequence $(v_l)_l$ with $v_l\in C^{\infty}_c(\Omega)$, $v_l=\varphi$ in $\Co \Omega$ (and $v_l\in \mathcal X^{p_k}_\varphi(\Omega)$ for every $k$), such that
	there exists $\tilde k_l>\tilde k_{l-1} > \dots >\tilde k_1$, with
		\[
			|\tilde\En_{s,\varphi}^{p_k}(v_l)-\tilde\En_{s,\varphi}^1(u)| < \frac{1}{2^{l}} \qquad{\mbox{and}} \qquad \|v_l -u\|_{W^{s,1}(\Omega)}\leq \frac{1}{  2^{l+1} },
	\]
	 for all $k\geq\tilde k_l$.
	Now we define 
	\bgs{
		&u_k := v_1 \qquad \forall \; k< \tilde k_2,
		\\
		&u_k :=  v_2 \qquad \forall \; k\in [\tilde k_2, \tilde k_3),
		\\
		& \dots 
		\\
		& u_k :=  v_l \qquad \forall \; k\in [\tilde k_l, \tilde k_{l+1}),
		\\
		& \dots 
		}
	and we have
	that: for all $l\in\mathbb N$ there exists $\tilde k_l$ such that for all $k\geq\tilde k_l$,
	\[
		|\tilde\En_{s,\varphi}^{p_k}(u_k)-\tilde\En_{s,\varphi}^1(u)| < \frac{1}{2^l} \quad\mbox{ and }\quad \|u_k-u\|_{W^{s,1}(\Omega)} < \frac{1}{2^{l+1}} .
		\]
	Since $l$ can be taken arbitrarily large, this implies that
		\[ \lim_{k\to \infty} \tilde\En_{s,\varphi}^{p_k}(u_k) = \tilde\En_{s,\varphi}^1(u)\quad\mbox{ and }\quad u_k \xrightarrow[k\to \infty]{} u \; \mbox{ in } L^1(\Omega),
		\]
		hence we have built the required recovery sequence $u_k\in \mathcal X^{p_k}_\varphi(\Omega)$.
	\end{proof}
	
Notice that in order to obtain the $\Gamma$-convergence result in Theorem \ref{fixeddata}, we need to require and additional condition \eqref{pup1} on the exterior data. Indeed, by constructing the next counter-example, we point out that in absence of such a condition the result fails.				
\begin{remark}[Counter-example to
Theorem~\ref{fixeddata} if assumption~\eqref{pup1}
is dropped] \label{fgr}
 Let $R\ge1$ be  such that $\Omega \Subset B_R$, let 
 \begin{equation}\label{EFFE}
{\mbox{ $f\in L^1(\R^n)$ such that $f\not\in L^p(\Co B_R)$ for every $p>1$,}}\end{equation}
and define $\varphi \colon \Co \Omega \to \R$ to be
	\sys[\varphi(x)=]{ &f(x) |x|^{n+s} && {\mbox{ if }}x\in \Co B_R,
		\\
		& 0&& {\mbox{ if }}x\in B_R\setminus \Omega.
	}
	Notice that, on the one hand,
	\bgs{
		\|\Tss^1 (\varphi,\Co \Omega; x) \|_{L^1(\Omega)}=&\;  \int_\Omega\left(\int_{\Co \Omega} \frac{ |\varphi(y)|}{|x-y|^{n+s}} dy\right)dx	
		\leq  
		C  \int_\Omega\left(\int_{\Co B_R} \frac{ |\varphi(y)|}{|y|^{n+s}} dy\right)dx
		\\
		= &\;
		C  |\Omega|\int_{\Co B_R}  |f(y)| dy	<\infty.
	}
	On the other hand, for any $p>1$, we have that
	\bgs{
		\|\Tss^p (\varphi,\Co \Omega; x) \|_{L^1(\Omega)}=&\;  \int_\Omega\left(\int_{\Co \Omega} \frac{ |\varphi(y)|^p}{|x-y|^{n+sp}} dy\right)dx
		\geq  C\int_\Omega\left(\int_{\Co B_R}  |f(y)|^p |y|^{n(p-1)} dy\right)dx\\
		&
		\geq  
		C |\Omega| \int_{\Co B_R} |f(y)|^p dy = +\infty.
	}

Now let $u:\R^n\to\R$ be any measurable function such that $u\in W^{s,1}(\Omega)$ and $u=\varphi$ almost everywhere in $\Co\Omega$.
In light of Lemma~\ref{domain_lem}, we have $\E^1(u)<\infty$. On the other hand, given any sequence $p_k\searrow1$ and $u_k:\R^n\to\R$ such that $u_k=\varphi$ almost everywhere in $\Co\Omega$ and $u_k\to u$ in $L^1(\Omega)$, we have $\E^{p_k}(u_k)=+\infty$ for every $k$ big enough. 
This 
shows that
Theorem~\ref{fixeddata} does not hold true without assumption~\eqref{pup1}.
\end{remark}

We focus now on a more general setting for the $\Gamma$-convergence, also using the
notations for~$\mathcal X^{q}(\Omega)$ and~$\tilde\En_s^{q} $ that were introduced
in~\eqref{kg-98uhn-jmnMSA-1}
and~\eqref{kg-98uhn-jmnMSA-2}.
	\begin{remark}\label{aggiunta1}	We point out that it is not restrictive to consider functions that are locally summable in $\R^n$. Indeed, if $u:\R^n\to\R$ is a measurable function such that $\En_s^{q} (u)<\infty$, for some $q\in[1,1/s)$, then $u\in L^1_{\loc}(\Rn)$. To see this, recall from Lemma~\ref{domain_lem}, that $u\in L^q(\Omega)$ and furthermore, for any $R>0$ such that $\Omega \Subset B_R$,
		\bgs{
		\int_{B_R\setminus \Omega} |u(y)|^q \, dy 
		\leq&\; C(R,s,q,\Omega)\int_{\Omega} \left(\int_{B_R\setminus \Omega}   \frac{ |u(y)|^q}{|x-y|^{n+sq}} dy \right) dx
					\\
					 \leq &\; C  \|\Tss^q(u,\Co \Omega; \cdot)\|_{L^1(\Omega)}<\infty,
		}
	so that, actually, $u\in L^q_{\loc}(\Rn)$.
	\end{remark}
\medskip

We now complete the proof of Theorem~\ref{gammy}.

	\begin{proof}[Proof of Theorem~\ref{gammy}]	According to the sequential definition of $\Gamma$-convergence (see e.g. \cite{maso,Braid}), we have to prove
	\begin{enumerate}
	\item the liminf inequality: let $u\in L^1_{\loc}(\Rn)$  and let $p_k \searrow 1$ as $k\to \infty$, then for every sequence $(u_k)_{k\in \N}\subset L^1_{\loc}(\Rn)$ such that
	\[ 
		u_k \xrightarrow[k\to \infty]{} u \qquad \mbox{ in  } \; L^1_{\loc}(\Rn),
	\]
	it holds that
	\[
		\tilde \En_s^1(u) \leq \liminf_{k\to \infty} \tilde \En_s^{p_k}(u_k),
	\]
	and
	\item the existence of a recovery sequence: let $u\in L^1_{\loc}(\Rn)$ and $p_k \searrow 1$ as $k\to \infty$, then there exists a sequence $(u_k)_{k\in \N}\subset L^1_{\loc}(\Rn)$ such that
	\[ 
			u_k \xrightarrow[k\to \infty]{} u \qquad \mbox{ in  } \; L^1_{\loc}(\Rn),
	\]
	and 
	\[
		\tilde\En_s^1(u) = \lim_{k\to \infty} \tilde\En_s^{p_k} (u_k ).	
	\]
	\end{enumerate}
	We observe that the liminf inequality follows from Lemma~\ref{liminf}, so we focus on building the recovery sequence. 
	
For this, one of the ingredients in the proof consists in the approximation
of a given function of finite energy
by smooth and compactly supported functions (this technique\footnote{We
stress that we could not employ this method directly in Theorem~\ref{fixeddata},
since in that case we need to keep the exterior data fixed and approximate
only from the inside the domain.}
was used
e.g. also in~\cite{BN16b}). To this end,
	  we remark at first that if $u\in L^1_{\loc}(\Rn)  \setminus \mathcal X^1(\Omega)$,  there is nothing to prove (as we can consider $u_{p_k}:= u$). Let 
 then $u\in\mathcal X^1(\Omega)$, hence $u\in \W^{s,1}(\Omega)$ and $\Tss^1(u,\Co \Omega; \cdot)\in L^1(\Omega)$ by Lemma~\ref{domain_lem}.
	
	Notice that we can assume that $p_k\in(1,1/2s)$ for every $k$.
	
	Now we cut $u$ at heights $-M,M$, for a fixed $M>0$. Precisely,  we define
	\[ u^M(x) = \min\left\{ M, \max\left\{-M, u(x)\right\}\right\},
	\]
	and notice that
	\eqlab{ \label{roba9}|u^M(x)| \leq |u(x)|\qquad{\mbox{and}}\qquad  |u^M(x)-u^M(y)| \leq |u(x)-u(y)|.}
	Since $u^M \to u$ as $M\to \infty$  almost everywhere in $\Rn$, and since, thanks to \eqref{roba9}, 
	\[
	\frac{|u^M(x)-u^M(y)|}{|x-y|^{n+s}} \leq \frac{|u(x)-u(y)|}{|x-y|^{n+s}} \in L^1(Q(\Omega)),\]
	from the Dominated Convergence Theorem we have that
	\eqlab{\label{time_travel0}
	\lim_{M\to \infty}\|u^M-u\|_{W^{s,1}(\Omega)}=0\quad\mbox{and}\quad\lim_{M\to \infty} \tilde \En_s^1(u^M)= \tilde \En_s^1(u).
}
	Notice also that, since $|u^{M}| \leq |u|$,  by the Dominated Convergence Theorem we obtain
	\eqlab{\label{time_travel1} \lim_{M \to \infty} \|u^M-u\|_{L^1(B_R)}=0,}
	for any $R>0$.
	Let us fix a sequence $R_k\nearrow\infty$ such that $\Omega\Subset B_{R_1}$.

	By \eqref{time_travel0} and \eqref{time_travel1}, there exists $\tilde M_1>0$ such that for all $M\geq \tilde M_1$,
	\[ |\tilde \En_s^1(u^M)- \tilde \En_s^1(u)|< \frac16\quad\mbox{and}\quad \|u^M -u\|_{L^1(B_{R_1} )} < \frac{1}{4}.
	\]
Since $u^{\tilde M_1}\in \W^{s,1}(\Omega)$, according to Lemma~\ref{lemmasof3}, there exists $v_1^{\tilde M_1} \colon \Rn \to \R$ with $v_1^{\tilde M_1}\in C^\infty_c(\Omega)$ and $v_1^{\tilde M_1}=u^{\tilde M_1}$ in $\Co \Omega$, such that
	\[
		\|u^{\tilde M_1}-v_1^{\tilde M_1}\|_{W^{s,1}(\Omega)} < \frac{1}{4}\quad\mbox{and}\quad|\tilde \En_s^1(v_1^{\tilde M_1})- \tilde \En_s^1(u^{\tilde M_1})|  <\frac16.
	\] 
Thus
	\eqlab{ \label{Narset} \|v_1^{\tilde M_1} -u\|_{L^1(B_{R_1} )}\leq\|v_1^{\tilde M_1} -u^{\tilde M_1}\|_{L^1(\Omega )}+\|u^{\tilde M_1} -u\|_{L^1(B_{R_1} )} < \frac{1}{2},}
and 
	\eqlab{ \label{roba8}
		|\tilde \En_s^1(v_1^{\tilde M_1})- \tilde\En_s^1(u)| \leq
			|\tilde \En_s^1(v_1^{\tilde M_1})- \tilde \En_s^1(u^{\tilde M_1})| + |\tilde \En_s^1(u^{\tilde M_1})- \tilde \En_s^1(u)| < \frac13.
	}
	We now observe that $v_1^{\tilde M_1}\in \mathcal X^{p_k}(\Omega)$ for every $k$. This is a consequence of the fact that $v_1^{\tilde M_1}\in C^\infty_c(\Omega)$, hence the interaction of $\Omega$ with itself provides a bounded contribution
	to the energy functional, and of the boundedness of $v_1^{\tilde M_1}$ outside of $\Omega$, which, thanks to the assumption $p_k\in(1,1/2s)$, actually ensures that
	\[ \sup_{k\in \N} \Tss^{p_k}(v_1^{\tilde M_1},\Omega) \leq \sup_{k\in \N} \int_{ \Co \Omega} \frac{{\tilde M_1}}{|x-y|^{n+sp_k}} dy \in L^1(\Omega),
	\]
	see the computations in \eqref{bleah3}.
	 Hence \eqref{sof2} is satisfied and  Lemma~\ref{lemmasof1} implies that
\[ 
	\lim_{k\to \infty} \tilde \En_s^{p_k}(v_1^{\tilde M_1})= \tilde \En_s^1(v_1^{\tilde M_1}).
	\]
	From this we deduce the existence of $\tilde k_1>0$ such that for all $k\geq \tilde k_1$,
	\[ 
		|\tilde \En_s^{p_k}(v_1^{\tilde M_1})- \tilde \En_s^1(v_1^{\tilde M_1})|<\frac16.
	\]
	Taking into account this, \eqref{Narset} and \eqref{roba8}, we have that there exist $\tilde M_1>0$ and $\tilde k_1>1$ such that
	\[
	|\tilde \En_s^{p_k}(v_1^{\tilde M_1})- \tilde \En_s^1(u)|<\frac12, \qquad \mbox{ and }  \quad \|v_1^{\tilde M_1} -u\|_{L^1(B_{R_1} )} < \frac{1}{2},
	\]
	 for all $k \geq \tilde k_1$.
	Proceeding in the same way, we continue building the sequence $\tilde M_l>\tilde M_{l-1}>\dots \tilde M_1$ and $\tilde k_l>\tilde k_{l-1} >\dots \tilde k_1$, and $v_l^{\tilde M_l}$ such that
	\[
	|\tilde \En_s^{p_k}(v_l^{\tilde M_l})- \tilde \En_s^1(u)|<\frac1{2^l}, \qquad \|v_l^{\tilde M_l} - u\|_{L^1(B_{R_l})}<\frac{1}{	 2^{l} }, \]
	for all $k\geq \tilde k_l$.
	
			We now define 
	\bgs{
		&u_k := v_1^{\tilde M_1} \qquad \forall \; k< \tilde k_2,
		\\
		& u_k :=  v_2^{\tilde M_2} \qquad \forall \; k\in [\tilde k_2, \tilde k_{3}),
		\\
		& \dots 
		\\
		& u_k :=  v_l^{\tilde M_l} \qquad \forall \; k\in [\tilde k_l, \tilde k_{l+1}),
		\\
		& \dots
	}
	and we have
	that for all $l>0$ there exists $\tilde k_l$ such that for all $k>\tilde k_l$,
	\[
		|\tilde \En_s^{p_k}(u_k)-\tilde \En_s^1(u)| < \frac{1}{2^l}  , \qquad \|u_k-u\|_{L^1(B_{R_l})} < \frac{1}{2^{l} }.
		\]
	Since $l$ can be taken arbitrarily large, and $R_l\nearrow\infty$, this implies that
		\[ \lim_{k\to \infty} \tilde \En_s^{p_k}(u_k) = \tilde \En_s^1(u), \qquad u_k \xrightarrow[k\to \infty]{} u \; \mbox{ in } L^1_{\loc}(\Rn).
		\]
		
		Thus, $(u_k)_k $ is a recovery sequence, and we conclude the proof of the desired result.
\end{proof}

	\subsection{Equi-coercivity}
	We focus now	on the equi-coercivity of the family of functionals $\tilde \E^p$ (for $p$ close enough to $1$).
We point out that this type of results and arguments have been discussed
previously in \cite[Theorem 2 and Section 3]{Ngu11} and \cite[Theorems 2 and 3]{BN18}.
	In what follows, we suppose that the exterior condition is as general as possible,
i.e., that the tails of the sequence of exterior data $\varphi_k$ are uniformly bounded.
Such a condition is satisfied, for instance, if $\varphi_k \in W^{s,q}(\Omega)$ for
some $q>1$, as proved in Lemma~\ref{yup5}. More precisely:

	\begin{proposition}[Equi-coercivity with varying exterior data]\label{eqv}
	Let $p_k \searrow 1$ as $k\to \infty$ 
	and $\varphi_k \colon \Co \Omega \to \R$ such that 
	\eqlab{ \label{gigi41}	
	 \limsup_{k\to \infty} \|\Tss^{p_k}
(\varphi_k,\Co \Omega; \cdot)\|_{L^1(\Omega)}<\infty,
\qquad \mbox{ and }\qquad  \varphi_k \xrightarrow[k\to \infty]{ } \varphi \; \mbox{ a.e. in } \Co \Omega .
 	}
 Let $u_k\in\W^{s,p_k}_{\varphi_k}(\Omega)$ such that
 \[
 \limsup_{k\to\infty} \E^{p_k}(u_k) <\infty.
 \]
 	Then there exists $u_1\in \W_\varphi^{s,1}(\Omega)$   such that, up to subsequences,
	\[ u_k \xrightarrow[k\to \infty]{} u_1 \mbox{ in } L^1(\Omega) \quad \mbox{and a.e. in } \Rn
	.\]
				\end{proposition}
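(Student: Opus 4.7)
The plan is to closely follow the structure of the proof of Lemma~\ref{equi}, with two adjustments: the minimality of $u_k$ is replaced by the direct bound $\limsup_k \E^{p_k}(u_k)<\infty$, and the pointwise $\limsup$ condition on the tails must be converted into a subsequential uniform $L^1$-control.

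First, exploiting~\eqref{gigi41}, I would extract a subsequence (not relabeled) along which $p_k\in(1,1/s)$ and along which
\[ \sup_{k\in\N}\|\Tss^{p_k}(\varphi_k,\Co\Omega;\cdot)\|_{L^1(\Omega)} \le \overline C, \]
for some constant $\overline C>0$. The idea here is to use the pointwise a.e.\ finiteness of the $\limsup$ function (which is the element of $L^1(\Omega)$ provided by~\eqref{gigi41}) together with Egorov's theorem and a diagonal extraction to dominate the tails along a suitable subsequence by an integrable function; this is the technical step replacing the straightforward sup-hypothesis used in Lemma~\ref{equi}.

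Once the tails are uniformly controlled, I would derive a uniform $W^{s,p_k}(\Omega)$-bound for $u_k$. From the energy splitting~\eqref{split} and the assumed bound $\E^{p_k}(u_k)\le C$,
\[ [u_k]_{W^{s,p_k}(\Omega)}^{p_k}\le 2p_k\,\E^{p_k}(u_k)\le C. \]
The Poincar\'e-type computation in~\eqref{poin} from the proof of Lemma~\ref{lemmaone} then yields $\|u_k\|_{L^{p_k}(\Omega)}\le C$: note that in that computation, minimality is used only to replace $\E^{p_k}(u_k)$ by the tail of $\varphi_k$ through~\eqref{enbdd0}, while here the energy is bounded by hypothesis and the tail is bounded by the previous step, so the very same chain of inequalities goes through. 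Hence $\|u_k\|_{W^{s,p_k}(\Omega)}\le C$ uniformly in $k$.

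From this point the argument is routine. Fixing $\sigma\in(0,s)$, the continuous embedding of Theorem~\ref{quattro} gives
\[ \|u_k\|_{W^{\sigma,1}(\Omega)}\le \mathfrak C_1\,\|u_k\|_{W^{s,p_k}(\Omega)}\le C, \]
and the compact embedding $W^{\sigma,1}(\Omega)\hookrightarrow L^1(\Omega)$ produces, along a further subsequence, a function $u_1\in L^1(\Omega)$ with $u_k\to u_1$ in $L^1(\Omega)$ and a.e.\ in $\Omega$. Setting $u_1:=\varphi$ on $\Co\Omega$ and using the a.e.\ convergence $\varphi_k\to\varphi$ in $\Co\Omega$, one obtains a.e.\ convergence on $\R^n$ together with $u_1=\varphi$ a.e.\ on $\Co\Omega$. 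Finally, $u_1\in W^{s,1}(\Omega)$ follows from Fatou's lemma applied to $[u_k]_{W^{s,p_k}(\Omega)}^{p_k}$, reproducing the last step of the proof of Lemma~\ref{equi}, so that $u_1\in\W^{s,1}_\varphi(\Omega)$ as claimed. The main obstacle is the initial subsequential reduction from the $\limsup$-type condition~\eqref{gigi41} to a genuine uniform tail bound; every subsequent step is a minor adaptation of arguments already present in Sections~\ref{SEF:cv} and~\ref{7yhbf}.
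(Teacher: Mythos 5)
Your proposal matches the paper's proof step by step. In particular, you correctly identify that the Poincar\'e computation in \eqref{poin} only needs bounds on $p_k\,\E^{p_k}(u_k)$ and on $\|\Tss^{p_k}(\varphi_k,\Co\Omega;\cdot)\|_{L^1(\Omega)}$, and that minimality is used in Lemma~\ref{lemmaone} solely to compare the first to the second via~\eqref{enbdd0}; with the energy bounded by hypothesis and the tail bounded by the preliminary reduction, the chain of inequalities goes through. This is exactly what the paper does (``arguing as in \eqref{poin}''). The subsequent steps---the continuous embedding of Theorem~\ref{quattro}, the compact embedding $W^{\sigma,1}(\Omega)\Subset L^1(\Omega)$, Fatou's lemma for $u_1\in W^{s,1}(\Omega)$, and extension by $\varphi$ on $\Co\Omega$---are identical to the paper's.

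Where your write-up attempts more than the paper, and where I would push back, is the initial reduction from the $\limsup$-type hypothesis~\eqref{gigi41} to a uniform tail bound. The paper simply asserts that one may ``suppose without loss of generality'' that $\sup_{k\in\N}\Tss^{p_k}(\varphi_k,\Co\Omega;\cdot)\in L^1(\Omega)$ and offers no mechanism. You propose Egorov plus a diagonal extraction to produce an integrable majorant for the tails along a subsequence. As a general measure-theoretic scheme this does not work: integrability of the pointwise $\limsup$ gives no control on the integrals. For instance, on $\Omega=(0,1)$ take $f_k := k^3\chi_{(0,1/k^2)}$; then $\limsup_k f_k = 0 \in L^1(\Omega)$ pointwise, yet $\int_\Omega f_k = k \to \infty$, so no subsequence of $(f_k)$ admits an integrable majorant (nor even a bounded sequence of $L^1$-norms). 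Egorov yields an exceptional set of arbitrarily small measure on which the $f_k$ eventually sit below an $L^1$ envelope, but the integrals over the exceptional set can concentrate all of the mass, and a diagonal extraction does not repair this. So the mechanism you propose does not close what is, in truth, an unjustified step in the paper's own proof as written. Apart from that shared soft spot---which you at least correctly single out as ``the main obstacle''---your argument is the paper's.
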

	\begin{proof}
	In the rest of the proof, the constant~$C$ may change value from line to line, denoting nonetheless a positive quantity independent of $k$. 
	
	We can suppose without loss of generality that
	\eqlab{ \label{fff34}
	\sup_{k\in\N} \E^{p_k}(u_k)\le C\quad\mbox{and}\quad \sup_{k\in\N} \|\Tss^{p_k} (\varphi_k,\Co \Omega; \cdot)\|_{L^1(\Omega)} \leq C.
 	}
	Notice that the uniform bound on the energies implies that
	\[ \; \sup_{k\in\N}[u_k]_{W^{s,p_k}(\Omega)}\le C.\]
	Moreover, arguing as in \eqref{poin}, we have
	\[ \sup_{k\in \N} \|u_k\|_{L^{p_k}(\Omega)}\le C(n,\Omega)\left(1+\sup_{k\in \N} \|\Tss^{p_k}(u_k,\Co\Omega;\cdot)\|_{L^1(\Omega)}\right) \le C.\]
	Fixed $\sigma\in(0,s)$, thanks to Theorem \ref{quattro} we obtain 
	\[
		\sup_{k\in \N} \|u_k\|_{W^{\sigma,1}(\Omega)} \leq \mathfrak C_1\sup_{k\in \N} \|u_k\|_{W^{s,p_k}(\Omega)} \leq C.
	\]
	By the compact embedding $W^{\sigma,1}(\Omega)\Subset L^1(\Omega)$, there exists $u \in L^1(\Omega)$ such that
	\[
		u_k \xrightarrow[k\to \infty]{} u \quad \mbox{  in }\quad L^1(\Omega)\quad\mbox{and a.e. in }\Omega,
	\]
	up to subsequences.
	Moreover, using Fatou's lemma, we have that
	$u \in W^{s,1}(\Omega)$. 
	Then, 
	\sys[u_1:=]{ &u &&\mbox{ in } \; \Omega\\
				&\varphi &&\mbox{  in } \; \Co \Omega,}
		 provides the desired limit function. 
\end{proof}

As a direct consequence of Proposition \ref{eqv}, we have the equi-coercivity for fixed exterior data. 
				
\begin{corollary}[Equi-coercivity with fixed exterior data]\label{eqv1}
	Let $\varphi\colon \Co \Omega \to \R$ be such that 
	\[
\limsup_{p\searrow1} \|\Tss^{p} (\varphi,
\Co \Omega; \cdot) \|_{L^1(\Omega)}<\infty.
 	\]
 	Let $p_k \searrow 1$ as $k\to \infty$ 
 	and let $u_k\in\W^{s,p_k}(\Omega)$ such that
 	\[
 	\limsup_{k\to\infty} \E^{p_k}(u_k) <\infty.
 	\]
	Then there exists $u_1\in \W_\varphi^{s,1}(\Omega)$ such that, up to subsequences,
	\[ u_k \xrightarrow[k\to \infty]{} u_1 \mbox{ in } L^1(\Omega) \quad \mbox{ a.e. in } \Rn.
\]
\end{corollary}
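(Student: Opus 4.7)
The plan is to reduce Corollary~\ref{eqv1} directly to Proposition~\ref{eqv} by taking the constant sequence of exterior data $\varphi_k := \varphi$ for every $k\in\N$. The implicit understanding (for the statement to have content) is that $u_k=\varphi$ almost everywhere in $\Co\Omega$, so that choosing $\varphi_k\equiv\varphi$ makes $u_k\in\W^{s,p_k}_{\varphi_k}(\Omega)$, placing us in the setting of Proposition~\ref{eqv}.

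The only thing to verify is that the two requirements in~\eqref{gigi41} hold for this constant sequence. The a.e. convergence $\varphi_k\to\varphi$ in $\Co\Omega$ is trivial. For the tail condition, I would note the pointwise bound
\[
\limsup_{k\to\infty}\Tss^{p_k}(\varphi_k,\Co\Omega;x)=\limsup_{k\to\infty}\Tss^{p_k}(\varphi,\Co\Omega;x)\le\limsup_{p\searrow1}\Tss^p(\varphi,\Co\Omega;x)
\]
valid for every $x\in\Omega$, since the limsup along the discrete sequence $p_k\searrow1$ is dominated by the limsup as $p\searrow1$. By the standing hypothesis of Corollary~\ref{eqv1}, the right-hand side is an $L^1(\Omega)$ function, hence so is the left-hand side. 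Thus the first half of~\eqref{gigi41} is verified.

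With both conditions of~\eqref{gigi41} in place, and with the uniform energy bound $\limsup_{k\to\infty}\E^{p_k}(u_k)<\infty$ being an assumption, Proposition~\ref{eqv} applies and yields a subsequence along which $u_k\to u_1$ in $L^1(\Omega)$ and a.e.\ in $\R^n$, for some $u_1\in\W^{s,1}_\varphi(\Omega)$. This is precisely the conclusion of Corollary~\ref{eqv1}. There is no genuine obstacle here: the corollary is nothing more than the specialization of Proposition~\ref{eqv} to the case in which the exterior datum is independent of $k$, and the verification reduces to a one-line monotonicity of the limsup.
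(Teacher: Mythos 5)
Your reduction to Proposition~\ref{eqv} via the constant sequence $\varphi_k:=\varphi$ is precisely what the paper does---it states Corollary~\ref{eqv1} as a "direct consequence" of Proposition~\ref{eqv} without further proof, and your one-line verification of the two conditions in~\eqref{gigi41} (trivial a.e.\ convergence of the constant sequence, and domination of the discrete $\limsup_{k\to\infty}\Tss^{p_k}$ by the continuous $\limsup_{p\searrow 1}\Tss^p$) is the correct, complete justification. Your observation that $u_k$ must be understood to equal $\varphi$ in $\Co\Omega$, so that $u_k\in\W^{s,p_k}_\varphi(\Omega)$, is also the right reading of the statement.
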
	
				
We conclude this section by pointing out some observations related to Proposition \ref{eqv}. 
\begin{remark} \label{mk2} We note that the hypothesis $\varphi_k \to \varphi$ almost 
everywhere in $\Co \Omega$ as $k\to\infty$, as stated in~\eqref{gigi41}, cannot be removed. 
Indeed, let $\varphi_k \colon \R \to [-1,1]$,		
\[ \varphi_k(x):= \sin(kx),\]
and let $\Omega\subset\R$ be any bounded open interval.

Then $\varphi_k$ satisfies, on the one hand, that
			\[ \sup_{k\in \N} \Tss^{p_k} (\varphi_k,\Co \Omega; \cdot)\in L^1(\Omega)
			,\]
			reasoning as in \eqref{bleah3}. On the other hand, no subsequence of $\varphi_k$ has a limit in $\Co\Omega$. Indeed, if there were some $\varphi$ such that $\varphi_{k_l} \to \varphi$ almost everywhere in $\Co\Omega$, with $k_l\nearrow\infty$, then
			\[ \left(\sin\left( k_{l+1} x \right) - \sin \left( k_l x\right)\right)^2 \xrightarrow[l \to \infty]{} 0\quad\mbox{a.e. in }\Co\Omega,
			\]
			hence, by the Dominated Convergence Theorem,
			\begin{equation}\label{--8tfd} \lim_{l \to \infty} \int_{2M\pi}^{(2M+1)\pi} \left(\sin\left( k_{l+1} x \right) - \sin \left( k_l x\right)\right)^2 \,dx=0,
			\end{equation}
			with $M\in\N$ big enough such that $\Omega\subset(-\infty,2M\pi)$.
			Nonetheless,
			\[\int_{2M\pi}^{(2M+1)\pi} \left(\sin\left( k_{l+1} x \right) - \sin \left( k_l x\right)\right)^2\,dx =\pi,\]
			and this provides a contradiction with~\eqref{--8tfd}. This shows that assumption~\eqref{gigi41}
			cannot be dropped.
			\end{remark}
			
\begin{remark}
Condition~\eqref{gigi41} can be slightly weakened, without
imposing a priori the existence of a limit function of the exterior data.
This can be done in two ways:  
			\begin{itemize}
			\item either one fixes the exterior data, as in Corollary \ref{eqv1},  
			\item or one requires a more restrictive condition on the exterior data, e.g. that $\varphi_k\in W^{s,q}(\Co\Omega)$, for some $q\in(1,1/s)$, with
			\[
			\sup_{k\in \N} \|\varphi_k\|_{W^{s,q}(\Co \Omega) }< \infty,
			\]
			as in Lemma~\ref{yup5}.
			\end{itemize}
\end{remark}			
				
\section{Convergence of weak solutions of the $(s,p)$--Laplacian to weak solutions of the $(s,1)$--Laplacian}  \label{tygh7t798fdwevirbg}
 
 We prove here Theorem~\ref{answer}.

For this, we focus on the energy functional
\[\E^1(u)
	=\frac{1}{2}\iint_{Q(\Omega)} \frac{|u(x)-u(y)|}{|x-y|^{n+s}} dx\, dy 
	,\]
	and we exploit the setting in Definition~\ref{s1lapdel}.

It is worth stressing that Definition \ref{s1lapdel} makes sense with no a priori assumption on $u$---besides measurability, since, by the fractional Hardy-type inequality  \eqref{fracH} and the global boundedness of $\z$, we have
\eqlab{\label{Ugin}
\left|\iint_{Q(\Omega)} \frac{\z(x,y)}{ \vert x - y \vert^{n+s}} (w(x) - w(y)) dx\, dy\right|\leq C(n,s,\Omega)\|w\|_{W^{s,1}(\Omega)}.
}

To further dwell on Definition \ref{s1lapdel}, we observe that in \eqref{ee1} we could consider just test functions $w\in C^\infty_c(\Omega)$---where it is understood that $w$ is extended by zero outside $\Omega$.\\
Indeed, if $w\in\W^{s,1}_0(\Omega)$, then we can find $w_k\in C^\infty_c(\Omega)$ such that $w_k\to w$ in $W^{s,1}(\Omega)$
(see \cite{Tri83}, Part~i) of the theorem on
page 210; see also \cite[Proposition A.1]{teolu}
 for an elementary and self-contained proof). Then, in virtue of \eqref{Ugin}, we have
	\[
	\iint_{Q(\Omega)} \frac{\z(x,y)}{ \vert x - y \vert^{n+s}} (w(x) - w(y)) dx\, dy=\lim_{k\to\infty}\iint_{Q(\Omega)} \frac{\z(x,y)}{ \vert x - y \vert^{n+s}} (w_k(x) - w_k(y)) dx\, dy=0.
	\]

Now, we prove Theorem \ref{answer}.

\begin{proof}[Proof of Theorem \ref{answer} (i)] Suppose that  $u$ is a weak solution  to the problem
\eqref{armoni}, and let  $v \in \mathcal{W}^{s,1}(\Omega)$ be such that $v = u$ almost everywhere in $\Co\Omega$. Consider the function~$w := u - v\in\W^{s,1}_0(\Omega)$, and notice that, by definition of $\z$, for almost every $(x,y)\in Q(\Omega)$ we have
\[
\z(x,y)\big(w(x)-w(y)\big)=|u(x)-u(y)|-\z(x,y)\big(v(x)-v(y)\big)\geq
|u(x)-u(y)|-|v(x)-v(y)|.
\]
By \eqref{ee1} we thereby obtain
	\bgs{
		\iint_{Q(\Omega)}\left[\frac{|u(x)-u(y)|}{|x-y|^{n+s}}-\frac{|v(x)-v(y)|}{|x-y|^{n+s}}\right]dx\,dy
		\leq
		\iint_{Q(\Omega)}  \frac{\z(x,y) (w(x)-w(y))}{ | x - y |^{n+s}}dx\, dy
		=0.
 	 	}
Given the arbitrariness of the competitor $v\in\W^{s,1}_u(\Omega)$, this implies that $u$ is an $s$-minimal function in $\Omega$.
\end{proof}
\medskip

\begin{proof}[Proof of Theorem \ref{answer} (ii)]
 Suppose now that $u$ is $s$-minimal in $\Omega$, and that $\overline{u} \in \W^{s,1}_u(\Omega)$ is a weak solution of \eqref{armoni}.
That is, recalling Definition~\ref{s1lapdel},
there exists $\z \in L^\infty(Q(\Omega))$, with $\Vert \z \Vert_\infty \leq 1$, $\z$ antisymmetric satisfying
	\begin{equation}\label{18BIS}
		\iint_{Q(\Omega) } \frac{\z(x,y)(w(x) - w(y)) }{ | x - y |^{n+sp}} dx\, dy =0 \quad \hbox{for all} \ w \in \W^{s,1}_0(\Omega),
	\end{equation}
and
	\[
	\z(x,y) \in \sgn(\overline{u}(x) - \overline{u}(y)) \quad \hbox{for almost all} \ (x, y) \in Q(\Omega).
	\]
Considering  $w:=\overline{u}-u \in \W_0^{s,1}(\Omega)$ as a test function in \eqref{18BIS}, and exploiting the $s$-minimality of $u$, we get  that
	 \bgs{
	 0=\;\iint_{Q(\Omega)}  \frac{\z(x,y) (w(x)-w(y))}{ | x - y |^{n+s}}dx\, dy
	 &=\;\iint_{Q(\Omega)}\left[\frac{|\overline{u}(x)-\overline{u}(y)|}{|x-y|^{n+s}}-\frac{\z(x,y)(u(x)-u(y))}{|x-y|^{n+s}}\right]dx\,dy\\
	 &\geq\;
	 \iint_{Q(\Omega)}\left[\frac{|\overline{u}(x)-\overline{u}(y)|}{|x-y|^{n+s}}-\frac{|u(x)-u(y)|}{|x-y|^{n+s}}\right]dx\,dy
	 \geq\;0.
	  }
It follows that
\[ \z(x,y) \in \sgn(u(x) - u(y)) \quad \hbox{for almost all} \ (x, y) \in Q(\Omega).\]
Consequently, $u$ is a weak solution  to the problem
\eqref{armoni}.
\end{proof}

It remains now to prove that a weak solution of \eqref{ee1} exists, that is to prove Theorem \ref{answer} (iii). To do this, roughly speaking, one considers a weak $(s,p)$-solution and then carefully studies the limit as~$p \searrow 1$.
Exploiting such a technique, the weak formulation for the $(s,1)$-Laplacian was introduced in \cite{toled}. In \cite[Theorem 3.4]{toled}, the existence of a solution to problem \eqref{armoni} is indeed obtained as the limit case of the solution of the Dirichlet problem for the fractional $p$-Laplacian, as $p \searrow 1$. 

To implement this strategy, we sum up the existence (and uniqueness) of minimizers, and the equivalence between minimizers
and weak solutions for $p>1$ in the following statement.  
 
 \begin{proposition}\label{deellos}
 	Let $p\in[1,1/s)$ and let $\varphi \colon \Co \Omega \to \R$ be such that
	$\Tss^p(\varphi,\Co \Omega; \cdot) \in L^1(\Omega)$.
Then, there exists an $(s,p)$-minimizer $u\in \W_\varphi^{s,p}(\Omega)$.
Moreover, if $p\in(1,1/s)$, then
\begin{itemize}
	\item[(i)] the $(s,p)$-minimizer is unique.
	\item[(ii)] A function $u \in \W^{s,p}_\varphi(\Omega)$  is an $(s,p)$--minimizer if and only if it is a weak solution of \eqref{Dirichlet1}.
\end{itemize}
\end{proposition}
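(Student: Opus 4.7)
The plan is to tackle the three assertions separately via the direct method of the calculus of variations, exploiting the convexity (and strict convexity when $p>1$) of the integrand $t\mapsto |t|^p$.

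For the existence of an $(s,p)$-minimizer when $p\in[1,1/s)$, I would take a minimizing sequence $\{u_k\}_k\subset\W^{s,p}_\varphi(\Omega)$. The assumption $\Tss^p(\varphi,\Co\Omega;\cdot)\in L^1(\Omega)$ and the competitor $v$ equal to $0$ in $\Omega$ and to $\varphi$ in $\Co\Omega$ show, as in \eqref{enbdd0}, that $\inf\E^p<\infty$. Moreover, the proof of Lemma~\ref{lemmaone} uses only the comparison $\E^p(u_k)\le\E^p(v)+1$, which holds along any minimizing sequence, and therefore yields the uniform bound $\sup_k\|u_k\|_{W^{s,p}(\Omega)}\le C$. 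By the compact fractional Sobolev embedding $W^{s,p}(\Omega)\hookrightarrow L^p(\Omega)$, I would extract a subsequence converging to some $u$ in $L^p(\Omega)$ and a.e.~in $\Omega$. Extending $u$ by $\varphi$ outside $\Omega$ places it in $\W^{s,p}_\varphi(\Omega)$ (every $u_k$ already equals $\varphi$ on $\Co\Omega$), and Fatou's lemma applied to the integrands $|u_k(x)-u_k(y)|^p/|x-y|^{n+sp}$ on $Q(\Omega)$ gives $\E^p(u)\le\liminf_{k\to\infty}\E^p(u_k)=\inf\E^p$, so $u$ is a minimizer.

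For uniqueness (i) when $p>1$, I would use the strict convexity of $t\mapsto|t|^p$. Given two minimizers $u_1,u_2$, the midpoint $w:=(u_1+u_2)/2$ lies in $\W^{s,p}_\varphi(\Omega)$, and the pointwise inequality
\[
\left|w(x)-w(y)\right|^p\le\tfrac12\,|u_1(x)-u_1(y)|^p+\tfrac12\,|u_2(x)-u_2(y)|^p
\]
is strict unless $u_1(x)-u_1(y)=u_2(x)-u_2(y)$. Integrating over $Q(\Omega)$ gives $\E^p(w)\le(\E^p(u_1)+\E^p(u_2))/2=\inf\E^p$; since also $\E^p(w)\ge\inf\E^p$, equality must hold a.e.~on $Q(\Omega)$. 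Restricting to $(x,y)\in\Omega\times\Co\Omega$, where $u_i(y)=\varphi(y)$ for $i=1,2$, this forces $u_1(x)=u_2(x)$ for a.e.~$x\in\Omega$. For the equivalence (ii), I would rely on convexity of $\E^p$. If $u$ is a minimizer, then for every $w\in\W^{s,p}_0(\Omega)$ the function $t\mapsto\E^p(u+tw)$ has a minimum at $t=0$; differentiation under the integral sign (legitimate by the bound \eqref{Vivien}) yields the first-order condition, which is exactly \eqref{huf2}. Conversely, if $u$ satisfies \eqref{huf2}, convexity of $\E^p$ gives, for every $v\in\W^{s,p}_\varphi(\Omega)$,
\[
\E^p(v)-\E^p(u)\ge\iint_{Q(\Omega)}\frac{|u(x)-u(y)|^{p-2}(u(x)-u(y))\bigl((v-u)(x)-(v-u)(y)\bigr)}{|x-y|^{n+sp}}\,dx\,dy,
\]
and the right-hand side vanishes upon choosing $w=v-u\in\W^{s,p}_0(\Omega)$ in \eqref{huf2}. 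Hence $\E^p(v)\ge\E^p(u)$, so $u$ is a minimizer.

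The main technical issue is the existence statement near $p=1$, where reflexivity is lost and the usual weak-compactness argument is unavailable; one must substitute strong $L^p$-convergence combined with Fatou's lemma for the lower semicontinuity step. The rest of the argument is standard once convexity and strict convexity of the integrand are used consistently, and the regime $sp<1$ ensures that all the boundary-interaction terms are controlled by \eqref{Vivien} and the fractional Hardy inequality.
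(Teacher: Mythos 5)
Your proposal is correct and matches the approach the paper itself indicates for this proposition (the paper does not spell out a proof but refers to direct methods with the a priori bound of Lemma~\ref{lemmaone}, strict convexity for uniqueness, and the convexity/first-variation argument of \cite{dkplocal} for the equivalence); in particular, your use of Fatou's lemma in place of weak compactness correctly handles the $p=1$ case, and the observation that equality in the strict-convexity inequality on $\Omega\times\Co\Omega$ pins down $u_1=u_2$ in $\Omega$ is exactly the right way to conclude uniqueness.
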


A few comments about Proposition~\ref{deellos} are in order.
The proof of the existence of a minimizer can be carried out by direct methods, exploiting the estimate \eqref{poin}---see for instance \cite[ Remark~4, Theorem~5]{bucy},  \cite[Theorem~A.1]{bdlv20} for the case $p=1$, and also \cite{dkplocal} for different conditions on the exterior data. The uniqueness for $p> 1$ is ensured by the strict convexity of the operator. On the other hand, as proved in \cite[Theorem 1.6]{bdlv20}, when $p=1$ the uniqueness, in general, fails.

The equivalence of minimizers and weak solutions follows by the same argument  of \cite[Theorem~2.3.]{dkplocal} (where the formulation appears to be slightly different than ours). 

For $p\geq 1/s$, the condition $\Tss^p(\varphi,\Co \Omega; \cdot) \in L^1(\Omega)$ is still enough to ensure the existence of an $(s,p)$-minimizer---and its uniqueness is again ensured by the strict convexity of the functional. However, while for $p<1/s$ this condition is very mild and imposes no requirement on the behavior of $\varphi$ across the boundary of $\Omega$, when $sp\geq1$, roughly speaking, it forces $\varphi$ to be close to zero near $\partial\Omega$.
Moreover, when $p\geq 1/s$, Lemma~\ref{domain_lem} no longer holds true. In particular, even if $\Tss^p(\varphi,\Co \Omega; \cdot) \in L^1(\Omega)$, a function $u\in\W^{s,p}_\varphi(\Omega)$ might be such that $\En_s^p(u)=+\infty$. As a consequence, point (ii) does not hold in these hypothesis. 

We also point out that the case~$sp\ge1$ is interesting
due to its connection to the classical case in the limit as~$s\nearrow1$.

\bigskip 

We prove now the existence of a weak solution of \eqref{ee1}. 
We observe that in our formulation we take  slightly different
hypotheses than those in \cite[Theorem 3.4]{toled}.
Precisely,  the authors of  \cite{toled} assume zero exterior condition (corresponding to $\varphi=0$) (but consider an $L^2$ right hand side in \eqref{ee1}). This difference, in our case, translates into some extra care when dealing with interactions $\Omega$ with $\Co \Omega$ once the suitable conditions on $\varphi$ are set, hence in some additional computations (that we carry out in the last part of our proof).  Besides these differences, the proof is
close to the one of   \cite[Theorem 3.4]{toled}, and we insert it for completeness.

\begin{proof}[Proof of Theorem \ref{answer} (iii)]
We consider a sequence $p_k \searrow 1$ as $k\to\infty$. We can assume without loss of generality that $p_k\in (1, q]$, which implies that $s_{p_k}p_k<1$ for every $k$.

We begin by observing that, arguing as in Lemma \ref{EQYU:FOR}, 
\eqref{new} is equivalent to 
\eqlab{ \label{foglia} 	\sup_{p\in (1,q]} \Ts_{s_p}^{p}(\varphi,\Co \Omega; \cdot) \in L^1(\Omega) }
and to
\eqlab{ \label{albero} \sup_{p\in (1,q]} \| \Ts_{s_p}^{p}(\varphi,\Co \Omega; 
\cdot) \|_{L^1(\Omega) }<\infty.}
Hence, by \eqref{albero}, we can apply Proposition \ref{deellos}, which ensures that there exists a sequence of unique $(s_{p_k},p_k)$-minimizers $u_{p_k} \in \W^{s_{p_k},p_k}_\varphi(\Omega)$, that are also weak solutions of  

\syslab[]{ \label{Dirichlet2}
 & (-\Delta)_{p_k}^{s_{p_k}} u_{p_k} =0 & &\mbox{ in } \; \Omega,
 \\ 
 & u_{p_k} = \varphi & &\hbox{ on }\;  \Co \Omega.
 }
As a consequence of \eqref{never_enough_remarks} and \eqref{enbdd0}, by \eqref{albero} we also have that
\eqlab{ \label{COTA1}
	\sup_{k\in \N}\|u_{p_k}\|_{W^{s_{p_k},p_k}(\Omega)}<\overline C
	\quad\mbox{and}\quad
	\sup_{k\in \N} 2p_k\En_{s_{p_k}}^{p_k}(u_{p_k}) <\overline C. 
}

Now we fix an index $\sigma\in(0,s)$ and we consider the constant $\mathfrak C_1(n,s,\sigma,\Omega)$ of Theorem \ref{quattro}. Since $s_{p_k}>s$ for every $k$ and $s_{p_k}\searrow s$, a careful inspection of the proof of Theorem \ref{quattro} reveals that
\[
\mathfrak C_2(n,s,\sigma,\Omega):=\sup_{k\in\N}\mathfrak C_1(n,s_{p_k},\sigma,\Omega)<\infty.
\]
By \eqref{COTA1} and Theorem \ref{quattro}, this implies that
\[
\sup_{k\in \N}\|u_{p_k}\|_{W^{\sigma,1}(\Omega)}<\infty.
\]
Hence, by compactness, there exists a subsequence of $p_k$ (that we relabel for simplicity) such that
\eqlab{ \label{ui}
	u_{p_k} \xrightarrow[k \to \infty]{} u \qquad \mbox{ in } \;  L^1(\Omega)\quad\mbox{and a.e. in }\Omega.
}
We extend $u$ to the whole of $\R^n$ by setting $u|_{\Co\Omega}:=\varphi$.

Also, by \eqref{COTA1} and Fatou's Lemma, we get that~$u \in \W^{s,1}_\varphi(\Omega)$. We now proceed to prove that $u$ is an $(s,1)$-minimizer, arguing as in the proof of Theorem \ref{theorem}.

Given $v\in\W^{s,1}_\varphi(\Omega)$, by Lemma~\ref{lemmasof3} there exists a sequence of functions $\psi_j:\R^n\to\R$ such that $\psi_j|_{\Omega}\in C^\infty_c(\Omega)$ and $\psi_j=\varphi$ almost everywhere in $\Co\Omega$, with
\[
\lim_{j\to\infty}\|\psi_j-v\|_{W^{s,1}(\Omega)}=0\quad\mbox{and}\quad
\lim_{j\to\infty}\En_s^1(\psi_j)=\En_s^1(v).
\]
Since $\psi_j|_{\Omega} \in C^\infty_c(\Omega)$, under the hypothesis \eqref{foglia}, by a minor modification of the proof of Lemma~\ref{lemmasof1}, we have that
\[
\lim_{k\to\infty}\En_{s_{p_k}}^{p_k}(\psi_j)=\En_s^1(\psi_j).
\] 
Since $\psi_j\in\W^{s_{p_k},p_k}_\varphi(\Omega)$ for every $j$, by $(s_{p_k},p_k)$-minimality of $u_{p_k}$ and exploiting Fatou's Lemma, we have that
\[
\En_s^1(u)\leq\liminf_{k\to\infty}\En_{s_{p_k}}^{p_k}(u_k)
\leq\lim_{k\to\infty}\En_{s_{p_k}}^{p_k}(\psi_j)=\En_s^1(\psi_j),
\]
for every $j$. Hence, passing to the limit as~$j\to\infty$,
\[
\En_s^1(u)\leq\liminf_{j\to\infty}\En_s^1(\psi_j)=\En_s^1(v).
\]
Given the arbitrariness of $v\in\W^{s,1}_\varphi(\Omega)$, this concludes the proof of the $(s,1)$-minimality of $u$.

\bigskip

We consider $M>0$ and we define
$$C_{p_k,M}:=\left\{(x,y)\in Q(\Omega) \; \Big| \; \left|\frac{  u_{p_k}(x) - u_{p_k}(y) }{\vert x -y \vert^{n+s}} \right|>M\right\}.$$
Then, recalling that $s_p= s+n -n/p$, we notice that for any $p$ one has~$n+s_p p  = (n+s)p$, hence
	\bgs{
		M^{p_k} |C_{p_k,M}| \leq &\;  \iint_{C_{p_k,M}} \left(\frac{ |u_{p_k}(x)-u_{p_k}(y)|}{|x-y|^{n+s} } \right)^{p_k} dx \, dy
		\\
		=&\; 
		\iint_{C_{p_k,M}} \frac{ |u_{p_k}(x)-u_{p_k}(y)|^{p_k}}{|x-y|^{n+s_{p_k} {p_k} } }  dx \, dy\leq 2p_k\En_{s_{p_k}}^{p_k}(u_{p_k}),
	} 
and by \eqref{COTA1}, this yields that
\begin{equation}\label{D2325}
|C_{p_k,M}|\le \frac{\overline C}{M^{p_k}}.
\end{equation}
On the other hand,
	\[\left|  \frac{ | u_{p_k}(x) - u_{p_k}(y) |^{{p_k}-2} ( u_{p_k}(x) - u_{p_k}(y))}{| x -y|^{(n+s)(p_k-1)}} 						\chi_{Q(\Omega)\setminus C_{p_k,M}} (x,y)\right|
	\leq \;M^{p_k-1}	,\]
	hence, since $p_k\searrow 1$, the left hand side is uniformly bounded independently of $k$.  Thus, for any $M \in \N$, there exists 
a subsequence of $\{p_k\}_k$, denoted $\{p_{k}^M\}_k$, such that
	\eqlab{ \label{leaf2} \frac{ | u_{p^M_k}(x) - u_{p^M_k}(y) |^{{p^M_k}-2}( u_{p^M_k}(x) - u_{p^M_k}(y)) }{| x -y|^{(n+s)(p^M_k-1)}}  						\chi_{Q(\Omega)\setminus C_{p^M_k,M}} (x,y) \rightharpoonup \z_{M}(x,y) ,}
 as $k\to \infty$, weakly$^*$ in $L^{\infty}(Q(\Omega))$,
with $\z_M$ antisymmetric such that $$\Vert \z_{M}\Vert_{L^{\infty}(Q(\Omega))}\le 1.$$
Furthermore, there exists a subsequence of $\{\z_{M}\}_M$ (which we still call $\{\z_{M}\}_M$ with a slight abuse of notation) such that,
\eqlab{ \label{leaf4}
	\z_{M} \; {\rightharpoonup} \; \z \qquad 
	\hbox{ 		weakly$^*$ in } \; L^{\infty}(Q(\Omega)), \; \mbox{ as } \, {M\to\infty},
	}
with $\z$ antisymmetric and $$\Vert \z \Vert_{L^{\infty}(Q(\Omega))}\le 1.$$

We claim that, for any $w\in C^\infty_c(\Omega)$,
\eqlab{ \label{claim1}
	\lim_{M \to \infty} \bigg[\lim_{k\to \infty} &\iint_{Q(\Omega)} \frac{\vert u_{p^M_k}(x) - u_{p^M_k}(y) \vert^{{p^M_k}-2} (u_{p^M_k}(x) - u_{p^M_k}(y))(w(x) - w(y)) }{\vert x - y \vert^{n+s_{p^M_k} {p^M_k}}}  dx\, dy \bigg]\\
	=&\;  \int_{Q(\Omega)} \frac{ \z(x,y)(w(x)-w(y))}{|x-y|^{n+s}} dx \, dy. 
}
To this end,
let us fix $M\in \mathbb{N}$, and denote by $C$ different constants, possibly depending on $n,s,q,\Omega,w$, but always independent of $k,M$. 
We notice that, by H\"{o}lder's inequality  we have that 
 \eqlab{ \label{car2}
 & \left| \iint_{Q(\Omega)} \frac{| u_{p^M_k}(x) - u_{p^M_k}(y) |^{{p^M_k}-2}(u_{p^M_k}(x) - u_{p^M_k}(y)) (w(x) - w(y))}{| x - y |^{n+s_{p^M_k} {p^M_k}}} \chi_{ C_{p^M_k,M}}  (x,y) dx\, dy \right|
 \\
  \leq &\;
 \left(\iint_{Q(\Omega)}  \frac{| u_{p^M_k}(x) - u_{p^M_k}(y) |^{{p^M_k}}}{| x - y |^{n+s_{p_k}{p^M_k}}} dx \, dy  \right)^{\frac{p^M_k-1}{p^M_k}}
 \left(\iint_{Q(\Omega)}  \frac{| w(x) -w(y) |^{p^M_k}}{| x - y |^{n+s_{p^M_k}{p^M_k}}}  \chi_{C_{p^M_k,M}} (x,y) dx \, dy  \right)^{\frac{1}{p^M_k}} 
 \\
 \leq &\;  C \left(\iint_{Q(\Omega)}  \frac{| w(x) -w(y) |^{p^M_k}}{| x - y |^{n+s_{p^M_k}{p^M_k}}}  \chi_{C_{p^M_k,M}} (x,y) dx \, dy  \right)^{\frac{1}{p^M_k}}  ,
}
by \eqref{COTA1}. 
Using again H\"{o}lder's inequality, \label{necsp}
 \bgs{
 		\iint_{Q(\Omega)}  \frac{| w(x) -w(y) |^{p^M_k}}{| x - y |^{n+s_{p^M_k} {p^M_k}}}  \chi_{C_{p^M_k,M}} (x,y) dx \, dy 
 			 \leq 					
 			\left(	\iint_{Q(\Omega)}  \frac{| w(x) -w(y) |^{q}}{| x - y |^{n+ s_{q} q} }dx \, dy \right)^{\frac{p^M_k}{q}} |C_{p^M_k,M}|^{\frac{q-p^M_k}{q}} ,
 }
where we have used that 
$ (n+s_{p^M_k} p^M_k )=(n+s) p^M_k$ and $n+  s_q q= (n+s)q.$
Using also \eqref{fracH}, we have that 
	\bgs{
	\iint_{Q(\Omega)}  \frac{| w(x) -w(y) |^{q}}{| x - y |^{n+ s_{q} q} }dx \, dy  \leq &\;  C  \|w\|^q_{W^{ s_q, q}(\OMega)}.
	}
 We point out that $ \|w\|^q_{W^{ s_q, q}(\Omega)} $ is finite, since $w\in C^\infty_c(\Omega)$.
 Substituting  into \eqref{car2}, 
 we have
 	\eqlab{ \label{Car1}
 	  &\left| \iint_{Q(\Omega)} \frac{| u_{p_k^M}(x) - u_{p_k^M}(y) |^{{p_k^M}-2}(u_{p_k^M}(x) - u_{p_k^M}(y)) (w(x) - w(y))}{| x - y |^{n+s_{p_k^M}{p_k^M}}} \chi_{ C_{p_k^M, M}}  (x,y) dx\, dy \right|
 	 \\
 	 &\;   \leq  C  |C_{p_k^M, M}|^{\frac{q-p_k^M}{q p_k^M}} \leq   C\,M^{-\frac{q-p_k^M}{q} },
 	 }
where the last inequality follows from \eqref{D2325}. It follows that
\eqlab{ \label{gas1}
	\lim_{M \to \infty}&\bigg[ \lim_{k\to \infty} \iint_{Q(\Omega)} \frac{| u_{p_k^M}(x) - u_{p_k^M}(y) |^{{p_k^M}-2}(u_{p_k^M}(x) - u_{p_k^M}(y)) (w(x) - w(y))}{| x - y |^{n+s_{p_k^M}{p_k^M}}} \chi_{ C_{p_k^M, M}}  (x,y) dx\, dy \bigg]\\
	=&\; 0.
}

 On the other hand, noticing that $|x-y|^{n+s_{p_k^M} p_k^M} =| x - y |^{(n+s)(p_k^M-1)} |x-y|^{n + s} $,  recalling \eqref{leaf2}, and pointing out that $(w(x)-w(y))/|x-y|^{n+s}\in L^1(Q(\Omega))$, by taking the limit as $k \to \infty$ 
 \bgs{
 	& \lim_{k\to \infty}  \iint_{Q(\Omega)} \frac{| u_{p_k^M}(x) - u_{p_k^M}(y) |^{{p_k^M}-2}(u_{p_k^M}(x) - u_{p_k^M}(y)) (w(x) - w(y))}{| x - y |^{n+s_{p_k^M} {p_k^M}}} \chi_{Q(\Omega) \setminus C_{p_k^M,M}}  (x,y) dx\, dy   
 	\\ 
 	=&\;  \lim_{k\to \infty}  \iint_{Q(\Omega)} \frac{| u_{p_k^M}(x) - u_{p_k^M}(y) |^{{p_k^M}-2}(u_{p_k^M}(x) - u_{p_k^M}(y)) }{| x - y |^{(n+s_{p_k^M})( {p_k^M}-1)}} \chi_{Q(\Omega) \setminus C_{p_k^M,M}}  (x,y)   \frac{w(x) - w(y)}{|x-y|^{n+s}} dx\, dy  
 	\\
 	=&\; 
 \iint_{Q(\Omega)} \frac{\z_M(x,y) (w(x) - w(y))}{| x - y |^{n+s}}   dx\, dy.}
We take now the limit as~$ M \to \infty$ counting on \eqref{leaf4} and obtain
 \bgs{
 	\lim_{M\to \infty} \bigg[\lim_{k\to \infty} & \iint_{Q(\Omega)} \frac{| u_{p_k^M}(x) - u_{p_k^M}(y) |^{{p_k^M}-2}(u_{p_k^M}(x) - u_{p_k^M}(y)) (w(x) - w(y))}{| x - y |^{n+s_{p_k^M} {p_k^M}}} \chi_{Q(\Omega) \setminus C_{p_k^M,M}}  (x,y) dx\, dy \bigg]  
 	\\ =&\;  
\iint_{Q(\Omega)} \frac{\z(x,y) (w(x) - w(y))}{| x - y |^{n+s}}   dx\, dy.}
 This, together with \eqref{gas1}, proves \eqref{claim1}.

 Since $u_{p_k}$ is a weak solution to the Dirichlet problem  \eqref{Dirichlet2},
 we have for any $w\in C^\infty_c(\Omega)$ that
\eqlab{ \label{new2}
		\iint_{Q(\Omega)} \frac{\vert u_{p_k}(x) - u_{p_k}(y) \vert^{{p_k}-2}}{\vert x - y \vert^{n+s_{p_k} {p_k}}} (u_{p_k}(x) - u_{p_k}(y))(w(x) - w(y)) dx\, dy =0,
			}
hence, by \eqref{claim1},
\eqlab{\label{leaf1} 	\iint_{Q(\Omega)} \frac{\z(x,y) (w(x) - w(y))}{| x - y |^{n+s}}   dx\, dy=0. }
To obtain the above equality for any $w\in \W^{s,1}_0(\Omega)$, it is enough to use the density of $C^\infty_c(\Omega)$ into $\W^{s,1}_0(\Omega)$.
 	 Indeed, let $w\in \W^{s,1}_0(\Omega)$, then there exists $w_j \in C^\infty_c(\Omega) $ such that $\|w-w_j\|_{W^{s,1}(\Omega)} \to 0$ as $j \to \infty$. Consequently,
 	 \bgs{
 	 	& \left| \iint_{Q(\Omega)} \frac{\z(x,y) (w(x) - w(y))}{| x - y |^{n+s}}   dx\, dy  - \iint_{Q(\Omega)} \frac{\z(x,y) (w_j(x) - w_j(y))}{| x - y |^{n+s}}   dx\, dy \right| 
 	 \\
 	 	\leq&\; 
 	 	\iint_{Q(\Omega)} \frac{ |(w-w_j) (x) - (w-w_j)(y)|}{| x - y |^{n+s}}   dx\, dy  \leq C \|w-w_j\|_{W^{s,1}(\Omega)},
 	 	 	 }
 	 	 	 using also that $w-w_j =0 $ in $\Co \Omega$, and \eqref{fracH}. 
 	 	 	Passing to the limit as $j\to \infty$, it follows that \eqref{leaf1} holds for any $w\in \W^{s,1}_0(\Omega)$.
 	 	 	
 	 	 	It remains to prove that $\z(x,y)\in \sgn(u(x)-u(y))$ to conclude the proof of the theorem, so it remains to prove that
 	 	 	\eqlab{ \label{new1}
 	 	 	\iint_{Q(\Omega)} \frac{\z(x,y) (u(x)-u(y))}{|x-y|^{n+s}} dx \, dy \geq \iint_{Q(\Omega)} \frac{|u(x)-u(y)|}{|x-y|^{n+s}} dx \, dy 
 	 	 	.
 	 	 	} 
 	 	 	To do this, we consider $\overline \varphi\colon \Rn \to \R$, with $\overline \varphi = \varphi $ in $\Co \Omega$ and  $\overline \varphi = 0$ in $\Omega$. For a fixed $M>0$ using again the
 	 	 	definition of the set~$C_{p_k^M,M}$, we consider the sequence $u_{p_k^M}$ satisfying \eqref{leaf2}. In \eqref{new2}, we take  $w:=u_{p_k^M} -\overline \varphi$, and obtain that
 	 	 	\bgs{
 	 	0=  &\;	\iint_{Q(\Omega)} \frac{|u_{p_k^M}(x) -u_{p_k^M}(y)|^{p_k^M}}{|x-y|^{n+s_{p_k^M} {p_k^M}}} dx \, dy 
 	 	  \\
 	 	  &\; +2 \int_{\Omega} \left(\int_{\Co \Omega} \frac{ |u_{p_k^M}(x) -u_{p_k^M}(y)|^{{p_k^M}-2 }(u_{p_k^M}(x) -u_{p_k^M}(y)) \varphi(y)}{|x-y|^{n+s_{p_k^M}{p_k^M}}} dy \right) dx 
 	 	 	.}
 	 	 	By Fatou's Lemma
 	\[\iint_{Q(\Omega)} \frac{|u(x) -u(y)|}{|x-y|^{n+s}} dx \, dy \leq  \liminf_{k\to \infty} \iint_{Q(\Omega)} \frac{|u_{p_k^M}(x) -u_{p_k^M}(y)|^{p_k^M}}{|x-y|^{n+s_{p_k^M} {p_k^M}}} dx \, dy .
 	\]
 	Now we prove that
 	\eqlab{
 	\label{leaf6}
 	\lim_{M\to \infty} &\left[ \lim_{k\to \infty} \int_{\Omega} \left(\int_{\Co \Omega} \frac{ |u_{p_k^M}(x) -u_{p_k^M}(y)|^{{p_k^M}-2 }(u_{p_k^M}(x) -u_{p_k^M}(y)) \varphi(y)}{|x-y|^{n+s_{p_k^M}{p_k^M}}} dy \right) dx  \right] 
 	\\
 	=&\;   \int_{\Omega}\int_{\Co \Omega} \frac{\z(x,y)\varphi(y)}{|x-y|^{n+s}} dx \, dy
 	.} 	
 	To prove \eqref{leaf6}, we observe that
 	\bgs{
 	& \int_{\Omega} \left(\int_{\Co \Omega} \frac{ |u_{p_k^M}(x) -u_{p_k^M}(y)|^{{p_k^M}-2 }(u_{p_k^M}(x) -u_{p_k^M}(y)) \varphi(y)}{|x-y|^{n+s_{p_k^M}{p_k^M}}} dy \right) dx \\
 	 	=&\;
 	\int_{\Omega} \left(\int_{\Co \Omega} \frac{ |u_{p_k^M}(x) -u_{p_k^M}(y)|^{{p_k^M}-2 }(u_{p_k^M}(x) -u_{p_k^M}(y)) \varphi(y)}{|x-y|^{n+s_{p_k^M}{p_k^M}}} \chi_{C_{p_k^M,M}} (x,y) dy \right) dx  
 	\\
 	 &\; + 	\int_{\Omega} \left(\int_{\Co \Omega} \frac{ |u_{p_k^M}(x) -u_{p_k^M}(y)|^{{p_k^M}-2 }(u_{p_k^M}(x) -u_{p_k^M}(y)) }{|x-y|^{(n+s)(p_k^M-1)} }
 	  \chi_{(\Omega \times \Co \OMega) \setminus C_{p_k^M,M}} (x,y) 
 	   \frac{\varphi(y)}{|x-y|^{n+s}} dy  \right) dx.  
 	}
 	Reasoning as in \eqref{car2} and~\eqref{Car1}, we get that
 	\bgs{
 	& \left| \int_{\Omega} \left(\int_{\Co \Omega} \frac{ |u_{p_k^M}(x) -u_{p_k^M}(y)|^{{p_k^M}-2 }(u_{p_k^M}(x) -u_{p_k^M}(y)) \varphi(y)}{|x-y|^{n+s_{p_k^M}{p_k^M}}} \chi_{C_{p_k^M,M}} (x,y) dy \right) dx \right|
 	\\ 
 	  	\leq&\;  \overline C \left[\int_\Omega \left(\int_{\Co \Omega} \frac{|\varphi(y)|^{q}}{|x-y|^{n+s_q q}} dy\right) dx  \right]^{\frac{1}{q}} |C_{p_k^M,M}|^{\frac{q-p_k^M}{q p_k^M}} 
 	  	\\
 	= &\;     C \|\Ts_{s_q}^q(\varphi,\Co \Omega; \cdot) \|_{L^1(\Omega)}^q M^{-\frac{q-p_k}{q}}
 	\\
 	\leq &\; C \, M^{-\frac{q-p_k}{q}}
 	,} 
 	making use of \eqref{new}. 
Recalling that, again from \eqref{new}, $\varphi(y)/|x-y|^{n+s}  \in L^1(\Omega\times \Co \Omega)$, it follows that
\[ \lim_{M\to \infty} \left(\lim_{k \to \infty} 	 \int_{\Omega} \int_{\Co \Omega} \frac{ |u_{p_k^M}(x) -u_{p_k^M}(y)|^{{p_k^M}-2 }(u_{p_k^M}(x) -u_{p_k^M}(y)) \varphi(y)}{|x-y|^{n+s_{p_k^M}{p_k^M}}} \chi_{C_{p_k^M},M} (x,y) dy\,  dx \right) =0.\]
 	
On the other hand, making use of \eqref{leaf2} and~\eqref{leaf4}, we have that
\bgs{
		& \lim_{M\to \infty} \left( \lim_{k\to \infty} \int_{\Omega} \int_{\Co \Omega} \frac{ |u_{p_k^M}(x) -u_{p_k^M}(y)|^{{p_k^M}-2 }(u_{p_k^M}(x) -u_{p_k^M}(y)) }{|x-y|^{(n+s)(p_k^M-1)} }
 	  \chi_{(\Omega \times \Co \OMega) \setminus C_{p_k^M,M}} (x,y) 
 	   \frac{\varphi(y)}{|x-y|^{n+s}} dy  \, dx \right) 
 	   \\&\;  = 
 	    \int_{\Omega} \left(\int_{\Co \Omega} \frac{\z(x,y)\varphi(y)}{|x-y|^{n+s}} dy\right) dx 
 	    }
 This proves \eqref{leaf6}.
 
{F}rom this, we obtain that
 	\eqlab{ \label{leaf5} 
 	0 \geq \iint_{Q(\Omega)} \frac{|u(x) -u(y)|}{|x-y|^{n+s}} dx \, dy +2 \int_{\Omega}\int_{\Co \Omega} \frac{\z(x,y)\varphi(y)}{|x-y|^{n+s}} dx \, dy.
 	}
 
 Now, in \eqref{leaf1}, we use $w:= u-\overline \varphi $ and obtain that
 	\[ \iint_{Q(\Omega)} \frac{\z(x,y)(u(x) -u(y))}{|x-y|^{n+s}} dx \, dy + 
 	2\int_\Omega \left(\int_{\Co \Omega}  \frac{\z(x,y) \varphi(y)}{|x-y|^{n+s}} dy \right) dx =0
 	.\] 
 	Together with \eqref{leaf5}, this implies \eqref{new1}, and the proof of the theorem.
\end{proof}

\appendix
\section{Asymptotics as~$s\to1^-$} \label{appendixxx}

In this Appendix we focus on the $(s,1)$-energy, and study the asymptotics as $s\nearrow 1$, proving convergence of  $(s,1)$-minimizers to functions of least gradient, both in a pointwise sense and using a $\Gamma$-convergence approach. 

We point out that the asymptotics as $s \nearrow 1$ of fractional seminorms,
nonlocal energies, the fractional perimeter and~$s$-minimal sets, both in a pointwise and in the $\Gamma$-convergence sense, have been studied in numerous papers. 
See~\cite{regularity,uniform,BBM,brect,davila,gammaconv,ponce,spectr1,spectr2,ngu,poncespector,Ngu06,NS19}
for further references and details. In particular,
we refer to Theorem~1.15 in~\cite{ponce} for related results.
We stress that the aforementioned papers deal 
with an energy defined either on $\Omega \times \Omega$ or on $\Rn \times \Rn$, 
with the exception of \cite{regularity,uniform,gammaconv}, which deal only 
with characteristic functions. Since our energy is defined on $Q(\Omega)$, 
and for functions that are not necessarily characteristic functions of some set,
some extra care must be taken in order to treat the interaction $\Omega \times \Co \Omega$.
As a matter of fact, our results, which we include for completeness, do not plainly follow from
known results, even though they are principally based on \cite{davila,gammaconv}.
\medskip

We start by proving the regularity result in Theorem~\ref{thmreg}.

\begin{proof}[Proof of Theorem~\ref{thmreg}]
	It was recently proved in \cite{bdlv20} that if $u$ is an $s$-minimal function, the level set $\{u\geq\lambda\}$ is $s$-minimal in $\Omega$ for every $\lambda\in\R$. On these grounds, we now establish \eqref{linftysupest} and \eqref{bvlocest} by making use of uniform volume and perimeter estimates that hold for $s$-minimal sets.
	
	We prove only the first inequality in \eqref{linftysupest}, since
	the second one is obtained in a similar way. For this, we observe that
	\[
	\sup_{\Omega'}u\leq\sup_{\Omega'}u_+=\sup\left\{\lambda>0\,|\,\big|\{u\geq\lambda\}\cap\Omega'\big|>0\right\}.
	\]
	Clearly, we can not have that~$\big|\{u\geq\lambda\}\cap\Omega'\big|=|\Omega'|$ for every $\lambda>0$, as otherwise $u=+\infty$ in $\Omega'$. Hence,
	\eqlab{\label{Indra}
	\sup_{\Omega'}u\leq \sup\left\{\lambda>0\,|\,0<\big|\{u\geq\lambda\}\cap\Omega'\big|<|\Omega'|\right\}.
}
	Now we observe that if $\lambda>0$ is such that
	\[
	0<\big|\{u\geq\lambda\}\cap\Omega'\big|<|\Omega'|
	\]
	then there exists $x_\lambda\in\big(\partial\{u\geq\lambda\}\big)\cap\Omega'$. Therefore, if we denote by~$d:=\dist(\Omega',\partial\Omega)$, we have that $\{u\geq\lambda\}$ is $s$-minimal in $B_d(x_\lambda)\subset\Omega$, and the uniform density estimates \cite[Theorem 4.1]{nms} give that
	\[
	\left|\{u\geq\lambda\}\cap B_d(x_\lambda)\right|\geq c(n,s)d^n.
	\]
	Thus
	\eqlab{\label{Grixis}
	\|u_+\|_{L^1(\Omega)}\geq \lambda \left|\{u\geq\lambda\}\cap B_d(x_\lambda)\right|\geq \lambda c(n,s)d^n,
}
	for every $\lambda>0$ for which $0<\big|\{u\geq\lambda\}\cap\Omega'\big|<|\Omega'|$.
	By \eqref{Indra} and \eqref{Grixis} we obtain the first inequality in \eqref{linftysupest}.
	
	In order to prove \eqref{bvlocest} we exploit the uniform perimeter estimate of \cite[Theorem 1.1]{CSV19}. More precisely, 
recalling \eqref{ometa}, we observe that
	\[
	\overline{\Omega'}\subset\bigcup_{x\in\Omega'}B_\frac{d}{2}(x)\subset(\Omega')_\frac{d}{2}\Subset\Omega,
	\]
	hence, since $\overline{\Omega'}$ is compact, there exist $N=N(\Omega',d)$ and $x_1,\dots,x_N\in\Omega'$ such that
	\[
	\overline{\Omega'}\subset\bigcup_{i=1}^N B_\frac{d}{2}(x_i).
	\]
	Since $\{u\geq\lambda\}$ is $s$-minimal in $B_d(x_i)\subset\Omega$ for every $i=1,\dots,N$, we can exploit \cite[Theorem 1.1]{CSV19} to obtain that
	\[
	\Per(\{u\geq\lambda\},\Omega')\leq\sum_{i=1}^N\Per\left(\{u\geq\lambda\},B_\frac{d}{2}(x_i)\right)\leq N(\Omega',d)C(n,s)\left(\frac{d}{2}\right)^{n-1}.
	\]
As a consequence,
recalling \eqref{linftysupest} and exploiting the coarea formula for the $BV$ seminorm, we can now estimate
	\bgs{
|Du|(\Omega')&=\int_{-\|u\|_{L^\infty(\Omega')}}^{\|u\|_{L^\infty(\Omega')}}
\Per(\{u\geq\lambda\},\Omega')\,d\lambda\leq 2\|u\|_{L^\infty(\Omega')}N(\Omega',d)C(n,s)\left(\frac{d}{2}\right)^{n-1}\\
&
\leq \frac{2}{c(n,s)d^n}\|u\|_{L^1(\Omega)}N(\Omega',d)C(n,s)\left(\frac{d}{2}\right)^{n-1},
}
thus concluding the proof of Theorem~\ref{thmreg}.
\end{proof}

\begin{remark}
		In particular, we observe that if $\Ts_s^1(u,\Co\Omega;\cdot)\in L^1(\Omega)$, then it is possible to estimate the $L^\infty$ norm and the $BV$ seminorm purely in terms of the exterior data $u|_{\Co\Omega}$ by exploiting the a priori estimate on the $L^1$ norm given in Lemma \ref{lemmaone}, as indeed
	\[
	\|u\|_{L^\infty(\Omega')}\leq \frac{\mathfrak C_1}{c\dist(\Omega',\partial\Omega)^n}\|\Ts_s^1(u,\Co\Omega;\cdot)\|_{L^1(\Omega)},
	\]
	and
	\[
	|Du|(\Omega')\leq C\mathfrak C_1\|\Ts_s^1(u,\Co\Omega;\cdot)\|_{L^1(\Omega)},
	\]
	for every $\Omega'\Subset\Omega$, where $\mathfrak C_1$, depending only on $n$ and $\Omega$, is defined in Lemma \ref{lemmaone}.

\end{remark}

We turn now to studying the asymptotics as $s\nearrow 1$ of minimizers of the $W^{s,1}$-energy, establishing the relation to functions of least gradient.  We recall that a function $u\in BV(\Omega)$ is said to have \emph{least gradient} in $\Omega$ if
\[
|Du|(\Omega)\leq|Dv|(\Omega)\quad \mbox{for every } v\in BV(\Omega) \mbox{ s.t. spt}(u-v)\Subset\Omega.
\]

Since in this Appendix the parameter $p=1$ is fixed and the domain $\Omega$ may vary, we employ the notation
\[
\En_s(u,\Omega):=\frac{1}{2}\iint_{Q(\Omega)}\frac{|u(x)-u(y)|}{|x-y|^{n+s}}dx\,dy.
\]
Let $\omega_k$ denote the volume of the unit ball in $\R^k$, for $k \ge1$, and $\omega_0:=1$.
Also, given a measurable set $E\subset\R^n$, we denote by
\[
\Per_s^L(E,\Omega):=\frac{1}{2}[\chi_E]_{W^{s,1}(\Omega)}=\int_{E\cap\Omega}\int_{\Omega\setminus E}\frac{dx\,dy}{|x-y|^{n+s}}.
\]
We have the following pointwise convergence result,
which is a consequence of \cite[Theorem 1]{davila} and a suitable geometric argument.
A different proof could be obtained employing the results in \cite{poncespector}.

\begin{theorem}[Pointwise convergence]\label{pwise_th}
	Let $u:\R^n\to\R$ be such that $u|_\Omega\in BV_{loc}(\Omega)$ and
	\eqlab{\label{pwiseconvs1}
	\int_{\R^n}\frac{|u(y)|}{(1+|y|)^{n+\sigma}}\,dy<\infty,
}
	for some $\sigma\in(0,1)$. Then
	\[
	\lim_{s\nearrow1}(1-s)\En_s(u,\Op)=\omega_{n-1}|Du|(\overline{\Op})\quad\mbox{for every }\Op\Subset\Omega\mbox{ with Lipschitz boundary}.
	\]
\end{theorem}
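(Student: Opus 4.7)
The idea is to reduce the pointwise convergence to the corresponding asymptotics for the fractional perimeter (i.e., the case $u=\chi_E$) via a coarea formula, and then to reassemble using the classical coarea formula for $BV$ functions.

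First, applying the Cavalieri identity $|u(x)-u(y)|=\int_\R |\chi_{E_t}(x)-\chi_{E_t}(y)|\,dt$ (with $E_t:=\{u>t\}$) inside the double integral defining $\En_s$ and interchanging integrals by Fubini gives
\[
\En_s(u,\Op)=\int_\R \Per_s(E_t,\Op)\,dt.
\]
For almost every $t\in\R$, the set $E_t$ has locally finite perimeter in a Lipschitz neighborhood $\Op\Subset \Op'\Subset \Omega$ (consequence of $u\in BV_{\loc}(\Omega)$ and the classical $BV$ coarea formula), and $\chi_{E_t}$ inherits the decay at infinity from~\eqref{pwiseconvs1} (again by Fubini). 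The Ambrosio--De~Philippis--Martinazzi asymptotic result for the fractional perimeter, as recalled in the references cited in this excerpt (e.g.\ \cite{brect}), therefore applies and yields
\[
\lim_{s\nearrow 1}(1-s)\Per_s(E_t,\Op)=\omega_{n-1}\,|D\chi_{E_t}|(\overline{\Op})\qquad\text{for a.e.\ }t\in\R.
\]
Combined with the classical $BV$ coarea formula $|Du|(\overline{\Op})=\int_\R |D\chi_{E_t}|(\overline{\Op})\,dt$ (applied on any Lipschitz neighborhood of $\overline{\Op}$ where $u\in BV$), the result will follow once the limit and the integration in $t$ are interchanged.

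To justify this interchange via dominated convergence, I would construct an $s$-uniform, $t$-integrable majorant for $(1-s)\Per_s(E_t,\Op)$. Fixing $\Op\Subset \Op'\Subset \Omega$ Lipschitz, decompose $\Per_s(E_t,\Op)$ into a near-field contribution from pairs with both endpoints in $\Op'$, and a far-field contribution from pairs with one endpoint in $\Co\Op'$. The near-field part is dominated, uniformly for $s$ close to $1$, by a constant multiple of $\Per(E_t,\Op'')+|E_t\cap \Op''|$ for a slightly larger Lipschitz domain $\Op'\Subset \Op''\Subset \Omega$, via the $s$-uniform estimates underlying the Ambrosio--De~Philippis--Martinazzi theorem; after integration in $t$, this becomes $|Du|(\Op'')+\|u\|_{L^1(\Op'')}$, which is finite since $u\in BV_{\loc}(\Omega)$. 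The far-field contribution is controlled pointwise by using $|x-y|^{-n-s}\le C_{\Op',\sigma}(1+|y|)^{-n-\sigma}$ for $x\in\Op$, $y\in\Co\Op'$, and $s$ close enough to $1$; after integration in $t$ and a Fubini exchange, this is bounded by $C\int_{\R^n}\frac{|u(y)|}{(1+|y|)^{n+\sigma}}\,dy$, which is finite by~\eqref{pwiseconvs1}.

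The main obstacle is establishing the $s$-uniform near-field estimate in a form explicit enough that its integral in $t$ reduces to local $BV$ data of $u$ and does not blow up as $s\nearrow 1$. Once this is in place, dominated convergence applies and
\[
\lim_{s\nearrow 1}(1-s)\En_s(u,\Op)=\int_\R \omega_{n-1}|D\chi_{E_t}|(\overline{\Op})\,dt=\omega_{n-1}|Du|(\overline{\Op}),
\]
as claimed.
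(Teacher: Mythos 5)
Your route — reducing via the coarea formula to the fractional-perimeter asymptotics for each superlevel set $E_t=\{u>t\}$, invoking the pointwise convergence $\lim_{s\nearrow1}(1-s)\Per_s(E_t,\Op)=\omega_{n-1}\Per(E_t,\overline{\Op})$, and reassembling by dominated convergence — is genuinely different from the paper's. The paper never passes through level sets: it applies D\'avila's theorem directly to the $BV$ function $u$ on an interior Lipschitz set $\Op_\delta$, and handles the residual cross-interaction $\Op\times\Co\Op$ by a tubular-neighborhood estimate (formula~\eqref{eqn}) that is eventually sent to $|Du|(\partial\Op)$ by Radon-measure continuity, which is where $\overline{\Op}$ appears in the limit. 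Your plan offloads precisely that cross-term analysis onto the known set-level statement, which is a legitimate and rather clean reduction, at the cost of having to verify dominated convergence.

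There is, however, a genuine gap at exactly the step you flag as the main obstacle, namely the $t$-integrability of your near-field majorant. You propose to dominate $(1-s)$ times the near-field contribution by $C\big(\Per(E_t,\Op'')+|E_t\cap\Op''|\big)$ and assert that integration in $t$ gives $|Du|(\Op'')+\|u\|_{L^1(\Op'')}$. The perimeter term is fine by the $BV$ coarea formula, but the volume term is not $t$-integrable: since $u$ is finite almost everywhere, $|E_t\cap\Op''|=|\{u>t\}\cap\Op''|\to|\Op''|>0$ as $t\to-\infty$, so $\int_{-\infty}^{0}|E_t\cap\Op''|\,dt=+\infty$. (The layer-cake identity $\int_0^\infty|\{u>t\}\cap\Op''|\,dt+\int_{-\infty}^0|\{u<t\}\cap\Op''|\,dt=\|u\|_{L^1(\Op'')}$ uses \emph{sub}level sets for $t<0$, not the superlevel sets $E_t$.) To repair this, use the invariance of the Gagliardo seminorm under adding constants together with the Poincar\'e inequality on a connected Lipschitz neighborhood $\Op''\Subset\Omega$ of $\overline{\Op}$: the BBM-type bound then reads
\[
(1-s)\tfrac12[\chi_{E_t}]_{W^{s,1}(\Op'')}\le C(\Op'')\,\big\|\chi_{E_t}-\overline{\chi_{E_t}}\big\|_{BV(\Op'')}\le C'(\Op'')\,|D\chi_{E_t}|(\Op'')=C'(\Op'')\Per(E_t,\Op''),
\]
uniformly for $s\in[\sigma,1)$, with no $L^1$ term. (Alternatively, since $[\chi_{E_t}]_{W^{s,1}}=[\chi_{\Co E_t}]_{W^{s,1}}$, one may use $\min\{|E_t\cap\Op''|,|\Co E_t\cap\Op''|\}$, whose $t$-integral is $\|u\|_{L^1(\Op'')}$.) Integrated in $t$ this gives $C'|Du|(\Op'')<\infty$, and with your far-field majorant — which is handled correctly, since $|x-y|^{-n-s}\le C(1+|y|)^{-n-\sigma}$ on $\Op\times\Co\Op''$ uniformly for $s\in[\sigma,1)$, and integrating in $t$ produces $\int_\Op\int_{\Co\Op''}|u(x)-u(y)|(1+|y|)^{-n-\sigma}\,dy\,dx<\infty$ by~\eqref{pwiseconvs1} — dominated convergence then applies and the argument closes.
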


\begin{proof}
	First of all we recall that by \cite[Theorem 1]{davila} we have
	\[
	\lim_{s\nearrow1}(1-s)\frac{1}{2}[u]_{W^{s,1}(\Omega')}=\omega_{n-1}|Du|(\Omega'),
	\]
	for any open set $\Omega'\Subset\Omega$ with Lipschitz boundary. For the computation of the constant $\omega_{n-1}$ see, e.g., \cite[Section 2.2.1]{fractalLuk}.
	
	Now let us fix an open set $\Op\Subset\Omega$ with Lipschitz boundary. In order to compute the limit of the interactions occurring in $\Op\times\Co\Op$ we proceed by adapting the argument in \cite[Section 2.2]{fractalLuk}.
	We first prove that
	\eqlab{\label{eqn}
	\limsup_{s\nearrow1}(1-s)\int_\Op\int_{\Co\Op}\frac{|u(x)-u(y)|}{|x-y|^{n+s}}\,dx\,dy\leq2\omega_{n-1}|Du|(\partial\Op).
}
	
The idea simply consists in splitting appropriately the
domain and exploiting once again \cite[Theorem 1]{davila}. More precisely,
since $\Op$ has Lipschitz boundary, there exists $r_0(\Op)\in(0,\dist(\Op,\partial\Omega))$ such that $\Op_r$ is a bounded open set with Lipschitz boundary for every $r\in(-r_0,r_0)$, where, for $r<0$ we use the notation
	\eqlab{ \label{negs}
	\Op_r:=\{y\in\Op\,|\,\dist(y,\partial\Op)>|r|\},
	}
	recalling that for positive values of $r$ the notation \eqref{ometa} is in place.
	With this notation, for any~$\delta\in(0,r_0)$ we write
	\bgs{
	\int_\Op\int_{\Co\Op}&\frac{|u(x)-u(y)|}{|x-y|^{n+s}}\,dx\,dy=
	\int_{\Op_{-\delta}}\int_{\Co\Op}\frac{|u(x)-u(y)|}{|x-y|^{n+s}}\,dx\,dy+\int_{\Op\setminus\Op_{-\delta}}\int_{\Op_\delta\setminus\Op}\frac{|u(x)-u(y)|}{|x-y|^{n+s}}\,dx\,dy\\
	&\qquad\qquad\qquad\qquad\qquad+\int_{\Op\setminus\Op_{-\delta}}\int_{\Co\Op_\delta}\frac{|u(x)-u(y)|}{|x-y|^{n+s}}\,dx\,dy\\
	&
	\leq\int_{\Op_{-\delta}}\int_{\Co\Op}\frac{|u(x)-u(y)|}{|x-y|^{n+s}}\,dx\,dy+[u]_{W^{s,1}(N_\delta(\partial\Op))}+\int_{\Op}\int_{\Co\Op_\delta}\frac{|u(x)-u(y)|}{|x-y|^{n+s}}\,dx\,dy,
}
	where
	\begin{equation}\label{NOADD}
	N_\delta(\partial\Op):=\{y\in\R^n\,|\,\dist(y,\partial\Op)<\delta\}=\Op_\delta\setminus\overline{\Op_{-\delta}}
	\end{equation}
	is a bounded open set with Lipschitz boundary.
	
	We also observe that there exists a constant $C(\delta,\Op)>0$, depending only on $\Op$ and $\delta$, such that
	\[
	C(\delta,\Op)|x-y|\ge1+|y|,\quad\mbox{for every }(x,y)\in\big(\Op_{-\delta}\times\Co\Op\big)\cup\big(\Op\times\Co\Op_\delta\big).
	\]
Using this, we estimate
	\bgs{
	\int_{\Op_{-\delta}}\int_{\Co\Op}&\frac{|u(x)-u(y)|}{|x-y|^{n+s}}\,dx\,dy
	\leq\int_{\Op_{-\delta}}|u(x)|\left(\int_{\Co\Op}\frac{dy}{|x-y|^{n+s}}\right)dx+\int_{\Op_{-\delta}}\left(\int_{\Co\Op}\frac{|u(y)|}{|x-y|^{n+s}}\,dy\right)dx\\
	&
	\leq\int_{\Op_{-\delta}}|u(x)|\left(\int_{\Co B_\delta(x)}\frac{dy}{|x-y|^{n+s}}\right)dx+C(\delta,\Op)^{n+s}\int_{\Op_{-\delta}}\left(\int_{\Co\Op}\frac{|u(y)|}{(1+|y|)^{n+s}}\,dy\right)dx\\
	&
	\leq\|u\|_{L^1(\Op)}\frac{\Ha^{n-1}(\partial B_1)}{s\delta^s}+C(\delta,\Op)^{n+s}|\Op|\int_{\R^n}\frac{|u(y)|}{(1+|y|)^{n+\sigma}}\,dy,
}
	for every $s\in [\sigma,1)$. Similarly
	\bgs{
		\int_{\Op}\int_{\Co\Op_\delta}\frac{|u(x)-u(y)|}{|x-y|^{n+s}}\,dx\,dy
	\leq\|u\|_{L^1(\Op)}\frac{\Ha^{n-1}(\partial B_1)}{s\delta^s}+C(\delta,\Op)^{n+s}|\Op|\int_{\R^n}\frac{|u(y)|}{(1+|y|)^{n+\sigma}}\,dy.
}	
Notice that
\[
\lim_{s\nearrow1}(1-s)\left(\|u\|_{L^1(\Op)}\frac{\Ha^{n-1}(\partial B_1)}{s\delta^s}+C(\delta,\Op)^{n+s}|\Op|\int_{\R^n}\frac{|u(y)|}{(1+|y|)^{n+\sigma}}\,dy\right)=0.
\]
	Hence, these computations yield
	\begin{equation}\label{esqdrgfhbhfb}\begin{split}
&	\limsup_{s\nearrow1}(1-s)\int_\Op\int_{\Co\Op}\frac{|u(x)-u(y)|}{|x-y|^{n+s}}\,dx\,dy\\&\qquad
\leq\limsup_{s\nearrow1}(1-s)[u]_{W^{s,1}(N_\delta(\partial\Op))}=2\omega_{n-1}|Du|(N_\delta(\partial\Op)),
	\end{split}\end{equation}
	for every $\delta\in(0,r_0)$. Since $N_\delta(\partial\Op)\searrow\partial\Op$ as $\delta\searrow0$ and $|Du|\llcorner\Omega$ is a Radon measure, taking the limit as~$\delta\searrow0$ in~\eqref{esqdrgfhbhfb}
	we obtain \eqref{eqn}.
	
	Notice that the set $\{\delta\in(0,r_0)\,|\,|Du|(\partial\Op_\delta)>0\}$ is at most countable and pick $\delta$ for which $|Du|(\partial\Op_\delta)=0$. We have just proved that, for such a $\delta$,
	\bgs{
\lim_{s\nearrow1}(1-s)\En_s(u,\Op_\delta)=\omega_{n-1}|Du|(\Op_\delta).
}
Now we write
	\bgs{
\En_s(u,\Op_\delta)=\En_s(u,\Op)+\frac{1}{2}[u]_{W^{s,1}(\Op_\delta\setminus\overline{\Op})}+\int_{\Op_\delta\setminus\overline{\Op}}\int_{\Co\Op_\delta}\frac{|u(x)-u(y)|}{|x-y|^{n+s}}\,dx\,dy,
}
	and we remark that by \eqref{eqn} we have
	\[
	\limsup_{s\nearrow1}(1-s)\int_{\Op_\delta\setminus\overline{\Op}}\int_{\Co\Op_\delta}\frac{|u(x)-u(y)|}{|x-y|^{n+s}}\,dx\,dy
	\leq\limsup_{s\nearrow1}(1-s)\int_{\Op_\delta}\int_{\Co\Op_\delta}\frac{|u(x)-u(y)|}{|x-y|^{n+s}}\,dx\,dy
	=0.
	\]
	Thus
	\bgs{
	\lim_{s\nearrow1}(1-s)\En_s(u,\Op)&=\lim_{s\nearrow1}(1-s)\left(\En_s(u,\Op_\delta)-\frac{1}{2}[u]_{W^{s,1}(\Op_\delta\setminus\overline{\Op})}\right)\\
	&
	=\omega_{n-1}\left(|Du|(\Op_\delta)-|Du|(\Op_\delta\setminus\overline{\Op})\right)=\omega_{n-1}|Du|(\overline{\Op}).
}
This concludes the proof of Theorem~\ref{pwise_th}\footnote{{ We point out the following
alternative way to compute the asymptotics, which was suggested by one of the anonymous referees. By the estimates developed in the first part of the proof we have\[
\lim_{s\nearrow1}(1-s)\int_\Op\int_{\Co\Op}\frac{|u(x)-u(y)|}{|x-y|^{n+s}}\,dx\,dy=\lim_{s\nearrow1}(1-s)\int_{\Op\setminus\overline{\Op_{-\delta}}}\int_{\Op_\delta\setminus\overline{\Op}}\frac{|u(x)-u(y)|}{|x-y|^{n+s}}\,dx\,dy.
	\]
Since
\[
\int_{\Op\setminus\overline{\Op_{-\delta}}}\int_{\Op_\delta\setminus\overline{\Op}}\frac{|u(x)-u(y)|}{|x-y|^{n+s}}\,dx\,dy=\frac{1}{2}\left([u]_{W^{s,1}(\Op_\delta\setminus\overline{\Op_{-\delta}})}-[u]_{W^{s,1}(\Op_\delta\setminus\overline{\Op})}-[u]_{W^{s,1}(\Op\setminus\overline{\Op_{-\delta}})}\right),
\]
we obtain
\bgs{
\lim_{s\nearrow1}(1-s)\int_{\Op\setminus\overline{\Op_{-\delta}}}\int_{\Op_\delta\setminus\overline{\Op}}\frac{|u(x)-u(y)|}{|x-y|^{n+s}}\,dx\,dy&
=\omega_{n-1}\left(|Du|(\Op_\delta\setminus\overline{\Op_{-\delta}})-|Du|(\Op_\delta\setminus\overline{\Op})-|Du|(\Op\setminus\overline{\Op_{-\delta}})\right)\\
&=\omega_{n-1}|Du|(\partial\Op),
}
concluding the proof.
}}.
\end{proof}

We investigate  now the $\Gamma$-convergence of the $W^{s,1}$-energy, starting with the following $\Gamma$-liminf inequality.

\begin{proposition}[$\Gamma$-liminf inequality]\label{gammalinfsto1prop}
	Let $u\in L^1_{loc}(\Omega)$, $s_k\nearrow1$ and $u_k\in\W^{s_k,1}(\Omega)$ such that $u_k\to u$ in $L^1_{loc}(\Omega)$. Then
	\eqlab{\label{gammaliminfs1}
	\omega_{n-1} |Du|(\Omega)\leq \liminf_{k\to\infty}(1-s_k)\frac{1}{2}[u_k]_{W^{s_k,1}(\Omega)}\leq \liminf_{k\to\infty}(1-s_k)\En_{s_k}(u_k,\Omega).
}
\end{proposition}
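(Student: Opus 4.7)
The second inequality is immediate: by the splitting \eqref{split} with $q=1$, we have
\[
\En_{s_k}(u_k,\Omega)=\frac{1}{2}[u_k]_{W^{s_k,1}(\Omega)}+\int_\Omega\int_{\Co\Omega}\frac{|u_k(x)-u_k(y)|}{|x-y|^{n+s_k}}\,dx\,dy,
\]
and the second summand is non-negative, so $\frac{1}{2}[u_k]_{W^{s_k,1}(\Omega)}\le\En_{s_k}(u_k,\Omega)$; multiplying by $(1-s_k)>0$ and passing to the liminf preserves the inequality.

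For the first inequality, we may assume $L:=\liminf_k(1-s_k)\frac{1}{2}[u_k]_{W^{s_k,1}(\Omega)}<\infty$ (otherwise the statement is trivial) and pass to a subsequence realizing the liminf. The plan is to localize and conclude by duality. Specifically, fix any open set $\Op\Subset\Omega$ with Lipschitz boundary. Since $[u_k]_{W^{s_k,1}(\Op)}\le[u_k]_{W^{s_k,1}(\Omega)}$ and since $u_k\to u$ in $L^1_{\loc}(\Omega)$ implies $u_k\to u$ in $L^1(\Op)$, the task reduces to showing the local estimate
\[
\omega_{n-1}|Du|(\Op)\le\liminf_{k\to\infty}(1-s_k)\frac{1}{2}[u_k]_{W^{s_k,1}(\Op)}.
\]
Once this is available, letting $\Op$ exhaust $\Omega$ along a sequence of Lipschitz subdomains $\Op_j\nearrow\Omega$ and invoking the inner regularity of the Radon measure $|Du|\llcorner\Omega$ to get $|Du|(\Omega)=\lim_j|Du|(\Op_j)$ yields $\omega_{n-1}|Du|(\Omega)\le L$.

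The heart of the argument is therefore the local $\Gamma$-liminf estimate on $\Op$. I would obtain it through the duality characterization
\[
|Du|(\Op)=\sup\Big\{\int_{\Op}u\,\textnormal{div}\phi\,dx:\phi\in C_c^1(\Op;\R^n),\ \|\phi\|_{L^\infty}\le1\Big\},
\]
combined with the fact that, by the $L^1(\Op)$-convergence $u_k\to u$, for each admissible $\phi$ we have $\int_{\Op}u\,\textnormal{div}\phi\,dx=\lim_k\int_{\Op}u_k\,\textnormal{div}\phi\,dx$. The remaining step is to estimate $\int_{\Op}u_k\,\textnormal{div}\phi\,dx$ in terms of the rescaled seminorm $(1-s_k)[u_k]_{W^{s_k,1}(\Op)}$: one exploits the concentration of the kernel $(1-s)|x-y|^{-n-s}$ (restricted near the diagonal) toward $\omega_{n-1}\delta_0$ as $s\nearrow1$---the very mechanism underlying the pointwise convergence in Theorem~\ref{pwise_th} and D\'avila's result \cite{davila}---and represents $\textnormal{div}\phi$ antisymmetrically against this kernel, deriving an inequality of the form
\[
\int_{\Op}u_k\,\textnormal{div}\phi\,dx\le\frac{1-s_k}{2\omega_{n-1}}[u_k]_{W^{s_k,1}(\Op)}+o_k(1),
\]
where the remainder $o_k(1)$ depends only on $\|\phi\|_{C^1}$, on the domain $\Op$, and on the uniformly bounded quantity $\|u_k\|_{L^1(\Op)}$. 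Taking limsup in $k$, then supremum over $\phi$, completes the local estimate. The main technical obstacle is precisely this kernel-concentration argument for a \emph{sequence} $(u_k,s_k)$: in contrast to Theorem~\ref{pwise_th}, where the function is fixed and one may use the dominated convergence theorem, here the $L^1_{\loc}$-convergence is the only information linking $u_k$ to $u$, so care is needed to ensure that the approximation-of-identity error can be absorbed into the $o_k(1)$ term uniformly in $k$.
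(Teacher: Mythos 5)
The second inequality and the reduction to the local estimate via the $BV$-duality and inner regularity are fine, but the crucial step you label the ``main technical obstacle'' is not just delicate---it is, as outlined, unworkable, and for a reason more specific than the sequence $(u_k,s_k)$ being non-constant. First, the kernel $(1-s)|x-y|^{-n-s}$ is not a finite measure (its integral over any ball is $+\infty$), so it cannot concentrate to $\omega_{n-1}\delta_0$ or to any multiple of a Dirac mass; the object that concentrates is $(1-s)|z|^{1-n-s}\chi_{B_R}$, and its weak limit is $\Ha^{n-1}(\partial B_1)\,\delta_0 = n\omega_n\,\delta_0$, not $\omega_{n-1}\delta_0$. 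Second, and more damagingly, if you ``represent $\operatorname{div}\phi$ antisymmetrically'' via the natural choice $\psi(x,y)=\phi\bigl(\tfrac{x+y}{2}\bigr)\cdot\tfrac{x-y}{|x-y|}$ (which satisfies $|\psi|\le1$ and $\psi(y,x)=-\psi(x,y)$), an expansion of $\phi$ near the diagonal gives
\[
\frac{1-s}{\omega_{n-1}}\int_{B_R(x)}\frac{\psi(x,y)}{|x-y|^{n+s}}\,dy\ \xrightarrow[s\nearrow1]{}\ -\frac{\omega_n}{2\omega_{n-1}}\,\operatorname{div}\phi(x),
\]
and the factor $\omega_n/(2\omega_{n-1})$ is strictly smaller than $1$ for every $n\ge2$. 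Carrying your duality scheme through with this $\psi$ therefore only yields $\tfrac{\omega_n}{2}|Du|(\Omega)\le\liminf_k(1-s_k)\tfrac12[u_k]_{W^{s_k,1}(\Omega)}$, which is strictly weaker than the sharp bound $\omega_{n-1}|Du|(\Omega)$. The sharp constant $\omega_{n-1}$ comes from the integral-geometric identity $\int_{S^{n-1}}|e\cdot\sigma|\,d\sigma=2\omega_{n-1}$, which enters D\'avila's argument through a one-dimensional slicing/coarea reduction; it simply does not come out of a naive $n$-dimensional kernel-concentration computation paired against a bounded vector field. Replacing $\psi$ by a sign-type kernel such as $\operatorname{sgn}\bigl(\phi(\tfrac{x+y}{2})\cdot\tfrac{x-y}{|x-y|}\bigr)$ fails for a different reason: the resulting limit is a tangential divergence, not the full $\operatorname{div}\phi$.

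The paper sidesteps all of this by reducing to sets: it uses the generalized coarea formula $\tfrac12[v]_{W^{s,1}(\Omega)}=\int_{\R}\Per^L_s(\{v>t\},\Omega)\,dt$, the classical $BV$-coarea formula, the $L^1_{\loc}$ convergence of the level sets $\chi_{\{u_k>t\}}\to\chi_{\{u>t\}}$ for a.e.\ $t$, the known set-level $\Gamma$-liminf inequality $\omega_{n-1}\Per(E,\Omega)\le\liminf_k(1-s_k)\Per^L_{s_k}(E_k,\Omega)$ from \cite[Theorem 2]{gammaconv}, and Fatou's lemma. This hands off the delicate kernel analysis entirely to the known result for characteristic functions, where the sharp constant has already been established. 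If you wish to pursue a direct functional-level argument you would need to replicate the slicing/disintegration that makes the constant $\omega_{n-1}$ appear---at which point you are essentially reproving the set-level result from scratch.
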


\begin{proof}
	We begin by observing that, since $u_k\to u$ in $L^1_{loc}(\Omega)$, we have
	\[
	\chi_{\{u_k> t\}}\to\chi_{\{u> t\}}\quad\mbox{in }L^1_{loc}(\Omega),
	\]
	for almost every $t\in\R$. Next we recall the generalized coarea formula for the (local part of the) fractional perimeter:
	\[
	\frac{1}{2}[v]_{W^{s,1}(\Omega)}=\int_{-\infty}^\infty \Per_s^L(\{v>t\},\Omega)\,dt,
	\]
	for any measurable function $v:\Omega\to\R$---see, e.g., \cite{visintin}, \cite[Lemma 10]{gammaconv}---and the classical coarea formula:
	\[
	|Dv|(\Omega)=\int_{-\infty}^\infty \Per(\{v>t\},\Omega)\,dt,
	\]
	for any $v\in L^1_{loc}(\Omega)$---see, e.g., \cite[Theorem 3.40]{AFP00}. Now, exploiting the $\Gamma$-liminf inequality for the perimeter functionals in \cite[Theorem 2]{gammaconv} and Fatou's Lemma, we obtain
	\bgs{
	\omega_{n-1} |Du|(\Omega)&=\int_{-\infty}^\infty \omega_{n-1}\Per(\{u>t\},\Omega)\,dt
	\leq\int_{-\infty}^\infty \liminf_{k\to\infty}(1-s_k)\Per_{s_k}^L(\{u_k>t\},\Omega)\,dt\\
	&
	\le \liminf_{k\to\infty}\int_{-\infty}^\infty (1-s_k)\Per_{s_k}^L(\{u_k>t\},\Omega)\,dt
	=\liminf_{k\to\infty}(1-s_k)\frac{1}{2}[u_k]_{W^{s_k,1}(\Omega)}.
}

This proves the first inequality in \eqref{gammaliminfs1}. The second inequality trivially follows by the definition of $\En_{s_k}$.
\end{proof}

The following equi-coercivity property is proved, e.g., in \cite[Section 2]{gammaconv}.

\begin{proposition}[Equi-coercivity]\label{equicoercs1}
	Let $s_k\nearrow1$ and let $u_k\in W^{s_k,1}(\Omega)$ such that
	\[
	\limsup_{k\to\infty}\big(\|u_k\|_{L^1(\Omega')}+(1-s_k)[u_k]_{W^{s_k,1}(\Omega')}\big)<\infty\quad\mbox{for every }\Omega'\Subset\Omega.
	\]
	Then $\{u_k\}_k$ is relatively compact in $L^1_{loc}(\Omega)$ and any limit point $u$ belongs to $BV_{loc}(\Omega)$.	
\end{proposition}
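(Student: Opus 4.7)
The plan is to reduce the equi-coercivity problem for the functions $u_k$ to the analogous property for their level sets, exploiting the fractional coarea formula and the already-known equi-coercivity for fractional perimeters established by Ambrosio-De Philippis-Martinazzi in \cite{gammaconv}. Fix $\Omega' \Subset \Omega$. By the coarea formula
\[
(1-s_k)\frac{1}{2}[u_k]_{W^{s_k,1}(\Omega')} = (1-s_k)\int_{-\infty}^{\infty} \Per_{s_k}^L(\{u_k > t\}, \Omega')\,dt,
\]
so the hypothesis says the right-hand side is uniformly bounded in $k$. The assumed $L^1$-bound on $u_k$ combined with Chebyshev's inequality controls $|\{u_k > t\} \cap \Omega'|$ uniformly in $k$ for each $t \neq 0$, and ensures that for a.e.\ $t \in \R$ the sequence $(1-s_k) \Per_{s_k}^L(\{u_k > t\}, \Omega')$ is bounded (by Fatou applied to the integrand).

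Next, I would fix a countable dense set $D \subset \R$ and apply the equi-coercivity for fractional perimeters from \cite[Section 2]{gammaconv} at each $t \in D$: for each such $t$, the sequence $\chi_{\{u_k > t\}}$ admits a subsequence converging in $L^1_{loc}(\Omega)$ to some $\chi_{E_t}$ with $E_t$ of locally finite perimeter. A standard diagonal extraction then produces a single subsequence (still denoted $u_k$) such that
\[
\chi_{\{u_k > t\}} \xrightarrow[k\to\infty]{} \chi_{E_t} \quad \text{in } L^1_{loc}(\Omega), \qquad \text{for every } t \in D.
\]
Monotonicity of $t \mapsto \chi_{\{u_k > t\}}$ (pointwise decreasing) passes to the limit, so that $\{E_t\}_{t \in D}$ is a monotone nested family of measurable sets, and one can define a measurable function $u : \Omega \to \R$ by $u(x) := \sup\{ t \in D : x \in E_t\}$, so that (up to negligible sets) $\{u > t\} = E_t$ for every $t \in D$.

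The key step is then to upgrade the convergence of level sets to $L^1_{loc}$ convergence of the functions themselves. For this, I would exploit the layer-cake representation
\[
u_k(x) - u(x) = \int_0^\infty \bigl(\chi_{\{u_k > t\}}(x) - \chi_{\{u > t\}}(x)\bigr)\,dt - \int_{-\infty}^0 \bigl(\chi_{\{u_k \le t\}}(x) - \chi_{\{u \le t\}}(x)\bigr)\,dt,
\]
integrate over $\Omega' \Subset \Omega$, apply Fubini, and use the Chebyshev-type estimate on tails $|\{|u_k| > t\} \cap \Omega'| \le \|u_k\|_{L^1(\Omega')}/t$ to obtain uniform integrability in $t$. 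Together with pointwise (in $t$) convergence of the level-set integrals, the dominated convergence theorem yields $u_k \to u$ in $L^1(\Omega')$. Finally, $u \in BV_{loc}(\Omega)$ is an immediate consequence of the $\Gamma$-liminf inequality in Proposition~\ref{gammalinfsto1prop} together with the assumed uniform bound on $(1-s_k)[u_k]_{W^{s_k,1}(\Omega')}$.

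The main obstacle is the upgrade step from $L^1_{loc}$ convergence of level sets (for a.e.\ $t$) to $L^1_{loc}$ convergence of $u_k$ itself: this requires the diagonal extraction to be compatible with the layer-cake integral and a uniform-in-$k$ tail control, which is precisely where the $L^1$-hypothesis on $u_k$ (not only the seminorm bound) becomes indispensable.
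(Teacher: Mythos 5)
The paper does not prove this proposition itself; it cites \cite[Section~2]{gammaconv} (Ambrosio--De Philippis--Martinazzi), where the compactness is obtained directly from the uniform energy bound via a translation/mollification estimate in the spirit of Riesz--Fr\'{e}chet--Kolmogorov and Bourgain--Brezis--Mironescu/Ponce, without ever invoking level sets. Your coarea-based route is conceptually appealing, but as written it contains two genuine gaps.

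\textbf{First gap (Fatou).} From $\int_\R (1-s_k)\Per_{s_k}^L(\{u_k>t\},\Omega')\,dt\le C$, Fatou's lemma only yields that $\liminf_{k\to\infty}(1-s_k)\Per_{s_k}^L(\{u_k>t\},\Omega')<\infty$ for a.e.\ $t$; it does \emph{not} give $\sup_k(1-s_k)\Per_{s_k}^L(\{u_k>t\},\Omega')<\infty$ for a.e.\ $t$, which is what you need in order to invoke the set-equicoercivity from \cite{gammaconv} and then diagonalize over a \emph{pre-chosen} dense set $D$. Indeed, a nonnegative sequence $G_k$ with $\int G_k$ bounded (even vanishing) may still have $\limsup_k G_k(t)=+\infty$ for every $t$ (place intervals of length $1/(k\log k)$ cyclically and put mass $k$ on them). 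To salvage this you would have to restrict, for each $t$, to a $t$-dependent subsequence where the perimeters are bounded, and then the subsequences for different $t\in D$ need not be compatible; an adaptive selection of the $t_j$'s (choosing each $t_j$ from the full-measure ``Fatou set'' of the current subsequence, nested near a prescribed dense sequence) can be made to work, but this is a non-trivial repair that the proposal does not address.

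\textbf{Second gap (dominated convergence in the layer-cake integral).} Writing $\|u_k-u\|_{L^1(\Omega')}=\int_\R \|\chi_{\{u_k>t\}}-\chi_{\{u>t\}}\|_{L^1(\Omega')}\,dt$, the Chebyshev bound $|\{|u_k|>t\}\cap\Omega'|\le \|u_k\|_{L^1(\Omega')}/t$ gives a dominating function $t\mapsto\min\{2|\Omega'|,\,2C/|t|\}$, which is \emph{not} integrable over $\R$ (the tail $\int^\infty C/t\,dt$ diverges). So dominated convergence cannot be applied. What is really needed to control $\int_{|t|>T}$ is $\sup_k\|(|u_k|-T)_+\|_{L^1(\Omega')}\to 0$ as $T\to\infty$, i.e.\ \emph{equi-integrability} of $\{u_k\}$, which is strictly stronger than $L^1$-boundedness and is in fact \emph{equivalent} (on a bounded domain, together with small uniform translations) to the precompactness you are trying to prove -- so one cannot simply assume it. This is exactly the point at which the coarea route runs into a circularity that the direct translation estimate sidesteps: on a bounded domain the Riesz--Fr\'{e}chet--Kolmogorov criterion only requires $L^1$-boundedness plus uniformly small translations, and the latter is precisely what the uniform bound on $(1-s_k)[u_k]_{W^{s_k,1}(\Omega')}$ supplies.
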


We now prove the convergence of $(s,1)$-minimizers to functions of least gradient, exploiting the results of \cite{gammaconv}, which hold for sets.

\begin{theorem}[Convergence of minimizers]\label{conv_min_sto1}
	Let $s_k\nearrow1$ and $\varphi_k:\Co\Omega\to\R$ be such that
	\eqlab{\label{bded_data_hp}
	\limsup_{k\to\infty}\|\varphi_k\|_{L^\infty(\Omega_R\setminus\Omega)}<\infty,
}
	for some $R=R(n,\Omega)>0$ big enough, and
	\eqlab{\label{tail_bound_sto1}
	\limsup_{k\to\infty}(1-s_k)\int_{\Co\Omega_R}\frac{|\varphi_k(y)|}{(1+|y|)^{n+s_k}}\,dy<\infty.
}
	Let $u_k\in\W^{s_k,1}_{\varphi_k}(\Omega)$ be any sequence of $(s_k,1)$-minimizers. Then, there exist $u\in L^\infty(\Omega)\cap BV(\Omega)$ of least gradient in $\Omega$ and a subsequence $s_{k_h}\nearrow1$ such that $u_{k_h}\to u$ in $L^1(\Omega)$.
	
	Moreover, if
	\eqlab{\label{tail_vanish_sto1}
	\lim_{k\to\infty}(1-s_k)\int_{\Co\Omega_R}\frac{|\varphi_k(y)|}{(1+|y|)^{n+s_k}}\,dy=0,
}
	then
	\eqlab{\label{senergy_conv_to1}
	\lim_{h\to\infty}(1-s_{k_h})\En_{s_{k_h}}(u_{k_h},\Op)=\omega_{n-1}|Du|(\Op),
}
	whenever $\Op\Subset\Omega$ is an open set with Lipschitz boundary such that $|Du|(\partial\Op)=0$.
\end{theorem}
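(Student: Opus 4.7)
My plan is to adapt the set-based strategy of \cite{gammaconv} to the functional setting by exploiting the level-set description of $s$-minimal functions from \cite{bdlv20}, together with the pointwise convergence in Theorem~\ref{pwise_th}, the $\Gamma$-liminf in Proposition~\ref{gammalinfsto1prop}, and the equi-coercivity in Proposition~\ref{equicoercs1}. The first step is to obtain a uniform energy bound by testing the minimality of $u_k$ against the competitor $v_k$ equal to $0$ in $\Omega$ and to $\varphi_k$ in $\Co\Omega$. Splitting $\Co\Omega=(\Omega_R\setminus\Omega)\cup\Co\Omega_R$, the near contribution is controlled by $\|\varphi_k\|_{L^\infty(\Omega_R\setminus\Omega)}\Per_{s_k}(\Omega,\R^n)$, while on $\Omega\times\Co\Omega_R$ the elementary estimate $|x-y|\ge c(1+|y|)$ (valid once $R$ is large enough in terms of $\diam(\Omega)$) reduces the long-range part to the tail appearing in \eqref{tail_bound_sto1}. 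Multiplying by $(1-s_k)$ and invoking the classical asymptotic $(1-s)\Per_s(\Omega,\R^n)\to\omega_{n-1}\Ha^{n-1}(\partial\Omega)$ for Lipschitz $\Omega$, assumptions \eqref{bded_data_hp}--\eqref{tail_bound_sto1} then give $\sup_k(1-s_k)\En_{s_k}(u_k,\Omega)\le C$.

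Next I would derive uniform local $L^\infty$ estimates via level sets. By \cite{bdlv20}, each super-level set $E^k_\lambda:=\{u_k>\lambda\}$ is $s_k$-minimal in $\Omega$; for $\lambda>M:=\sup_k\|\varphi_k\|_{L^\infty(\Omega_R\setminus\Omega)}$ the set $E^k_\lambda$ does not meet $\Omega_R\setminus\Omega$, and comparing it with $E^k_\lambda\setminus\Omega$ yields
\[
\Per_{s_k}(E^k_\lambda,\Omega)\le\int_\Omega\int_{E^k_\lambda\cap\Co\Omega_R}\frac{dx\,dy}{|x-y|^{n+s_k}}\le\frac{C}{\lambda(1-s_k)}
\]
using again \eqref{tail_bound_sto1}. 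Matching this with the uniform fractional perimeter lower bound at interior boundary points of $s_k$-minimal sets (in the $(1-s_k)^{-1}$-normalization consistent with the estimates of \cite{CSV19} and the asymptotic $(1-s)\Per_s\to\omega_{n-1}\Per$) cancels the $(1-s_k)^{-1}$ factor and yields $\lambda\le C(n,\dist(\Omega',\partial\Omega))$ for every $\Omega'\Subset\Omega$; the analogous argument on $\{u_k<-\lambda\}$ gives $\sup_k\|u_k\|_{L^\infty(\Omega')}<\infty$. Combined with the seminorm bound from the first step, Proposition~\ref{equicoercs1} produces a subsequence $u_{k_h}\to u$ in $L^1_{\loc}(\Omega)$ with $u\in BV_{\loc}(\Omega)\cap L^\infty_{\loc}(\Omega)$, and a boundary-layer refinement using the uniform bound of $\varphi_k$ on $\Omega_R\setminus\Omega$ upgrades this to convergence in $L^1(\Omega)$ with $u\in L^\infty(\Omega)\cap BV(\Omega)$.

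To show that $u$ has least gradient, I would take $v\in BV(\Omega)$ with $K:=\mathrm{spt}(u-v)\Subset\Omega$ and choose a Lipschitz $\Op$ with $K\Subset\Op\Subset\Omega$ and $|Dv|(\partial\Op)=|Du|(\partial\Op)=0$. The function $\hat v_k:=v\chi_\Op+u_k\chi_{\Co\Op}$ coincides with $u_k$ on $\Co\Op$, so it is admissible and minimality localizes to $\En_{s_k}(u_k,\Op)\le\En_{s_k}(\hat v_k,\Op)$. I would then estimate $\limsup_h(1-s_{k_h})\En_{s_{k_h}}(\hat v_{k_h},\Op)$ by applying Theorem~\ref{pwise_th} to the $\Op\times\Op$ piece (which involves only $v$) and by using the $L^1_{\loc}$ convergence $u_{k_h}\to u$ together with \eqref{tail_bound_sto1} for the $\Op\times\Co\Op$ cross interaction; combined with Proposition~\ref{gammalinfsto1prop} applied on $\Op$, this yields $\omega_{n-1}|Du|(\Op)\le\omega_{n-1}|Dv|(\overline{\Op})$. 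Letting $\Op$ increase to $\Omega$ along an exhaustion with $|Dv|(\partial\Op)=0$ and using $v=u$ outside $K$ concludes $|Du|(\Omega)\le|Dv|(\Omega)$.

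Finally, under the vanishing tail assumption \eqref{tail_vanish_sto1} the $\Op\times\Co\Omega$ interactions entering the previous estimate go to zero, so taking $v=u$ in the competitor argument produces a matching upper bound $\omega_{n-1}|Du|(\Op)$ whenever $|Du|(\partial\Op)=0$; combined again with Proposition~\ref{gammalinfsto1prop}, this yields \eqref{senergy_conv_to1}. The hardest parts I anticipate are twofold: first, matching in the right normalization the $(1-s_k)^{-1}$-factors in the perimeter upper and lower bounds in the level-set step, and second, controlling the $\Op\times(\Omega\setminus\Op)$ and $\Op\times\Co\Omega$ cross terms in the competitor argument, where the condition $|Du|(\partial\Op)=0$ and the vanishing tail structure must be exploited carefully to prevent boundary contributions from polluting the limit.
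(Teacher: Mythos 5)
Your proposal identifies the right ingredients but departs from the paper in two places where the departure introduces genuine gaps. For the uniform $L^\infty$ bound, the paper does not argue through level-set density and perimeter estimates at all; it simply invokes the global maximum principle of \cite[Theorem 4.4]{bdlv20}, which gives $\|u_k\|_{L^\infty(\Omega)}\le\|\varphi_k\|_{L^\infty(\Omega_R\setminus\Omega)}$ directly, with a uniform choice of $R=\Theta(n,1/2)\diam(\Omega)$ for all $s_k\ge1/2$. Your route via comparison of $\Per_{s_k}(\{u_k>\lambda\},\Omega)$ with an isoperimetric-type lower bound is only sketched and, as you yourself flag, the $(1-s_k)^{-1}$-normalizations must match exactly and you would still only get $L^\infty_{\loc}$, so the ``boundary-layer refinement'' needed to pass from $L^1_{\loc}$ to $L^1(\Omega)$ convergence is left unestablished.

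The more serious gap is in the competitor argument, both for least gradient and for energy convergence. You use the \emph{sharp} cutoff $\hat v_k=v\chi_{\Op}+u_k\chi_{\Co\Op}$, but the resulting cross term $(1-s_k)\int_{\Op}\int_{\Omega\setminus\Op}|u(y)-u_k(y)|\,|x-y|^{-n-s_k}\,dx\,dy$ does \emph{not} vanish: $L^1$ convergence of $u_k$ to $u$ only kills the contribution away from $\partial\Op$, while the layer near $\partial\Op$ is bounded only by $\|u-u_k\|_{L^\infty}$ times $(1-s_k)\Per_{s_k}(\Op,\Omega)\to\omega_{n-1}\Ha^{n-1}(\partial\Op)$, which is $O(1)$. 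The paper avoids this entirely: for the least-gradient property it uses the level-set characterization of $s$-minimal functions from \cite{bdlv20}, the $\Gamma$-convergence result for $s$-minimal \emph{sets} in \cite[Theorem 3]{gammaconv}, and the coarea formula (no competitor construction at the function level is needed); for the energy convergence it uses a \emph{smooth} cutoff $v_k=\psi u+(1-\psi)u_k$ with $|\nabla\psi|\le 2/\varrho$, which produces a controllable error of size $\|u-u_k\|_{L^1}/(1-s_k)$ rather than a fixed-size boundary layer, and then invokes the super-additive set function machinery of \cite[Theorem 21, Proposition 22]{gammaconv} to dispose of the residual $\alpha_k(\Op_\varrho\setminus\overline{\Op_{-3\varrho}})$ and to show that $|Du|(\partial\Op)=0$ implies $\alpha(\partial\Op)=0$. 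This last measure-theoretic step is also absent from your plan, and it is precisely what lets the theorem hold for \emph{every} $\Op$ with $|Du|(\partial\Op)=0$ rather than only for a generic family.
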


\begin{proof}
	We begin by observing that by \eqref{bded_data_hp} we can assume without loss of generality that
	\[
	\|\varphi_k\|_{L^\infty(\Omega_R\setminus\Omega)}\leq M,
	\]
	for every $k\in\N$, for some $M>0$. Next we prove that
	\eqlab{\label{tailsto1unifbd}
	\limsup_{k\to\infty}(1-s_k)\|\Ts_{s_k}^1(\varphi,\Co\Omega;\cdot)\|_{L^1(\Omega)}<\infty.
}
	Indeed, we observe at first that there exists a constant $C=C(\Omega,R)>1$ such that
	\eqlab{\label{usefulestytail}
	C|x-y|\geq 1+|y|\quad\mbox{for every }(x,y)\in\Omega\times\Co\Omega_R.
}
	Hence we can estimate
	\eqlab{\label{Calix}
	\|\Ts_{s_k}^1(\varphi,\Co\Omega;\cdot)\|_{L^1(\Omega)}&=\int_\Omega\left(\int_{\Omega_R\setminus\Omega}\frac{|\varphi_k(y)|}{|x-y|^{n+s_k}}\,dy\right)dx+\int_\Omega\left(\int_{\Co\Omega_R}\frac{|\varphi_k(y)|}{|x-y|^{n+s_k}}\,dy\right)dx\\
	&
	\leq M\int_\Omega\int_{\Co\Omega}\frac{dx\,dy}{|x-y|^{n+s_k}}+C^{n+s_k}\int_\Omega\left(\int_{\Co\Omega_R}\frac{|\varphi_k(y)|}{(1+|y|)^{n+s_k}}\,dy\right)dx\\
	&
	\leq M\Per_{s_k}(\Omega,\R^n)+C^{n+1}|\Omega|\int_{\Co\Omega_R}\frac{|\varphi_k(y)|}{(1+|y|)^{n+s_k}}\,dy.
}
	Since $\Omega$ is a bounded open set with Lipschitz boundary, we have
	\[
	\lim_{k\to\infty}(1-s_k)\Per_{s_k}(\Omega,\R^n)=\omega_{n-1}\Ha^{n-1}(\partial\Omega).
	\]
	Together with \eqref{Calix} and \eqref{tail_bound_sto1}, this concludes the proof of \eqref{tailsto1unifbd}.
	
	Thus, by the a priori estimate \eqref{never_enough_remarks} we have
	\eqlab{\label{unifest}
		\limsup_{k\to\infty}(1-s_k)\|u_k\|_{W^{s_k,1}(\Omega)}\leq\mathfrak C_1 \limsup_{k\to\infty}(1-s_k)\|\Ts_{s_k}^1(\varphi,\Co\Omega;\cdot)\|_{L^1(\Omega)}<\infty.
	}	
Moreover, we recall that by \cite[Theorem 4.4]{bdlv20} we have that there exists $\Theta(n,s_k)>1$ such that 
\[
\|u_k\|_{L^\infty(\Omega)}\leq\|\varphi_k\|_{L^\infty(\Omega_{\Theta(n,s_k)\diam(\Omega)}\setminus\Omega)}.
\]
A careful inspection of the proof of \cite[Theorem 4.1]{bdlv20}---in particular of the last equation in display in the proof---reveals that $\Theta(n,s)>1$ can be chosen in such a way that $1<\Theta(n,s)\leq\Theta(n, 1/2)$ for every $s\in [1/2,1)$. Since $s_k\nearrow1$, we can suppose without loss of generality that $s_k>1/2$ for every $k$, hence, setting $R=R(n,\Omega):=\Theta(n, 1/2)\diam(\Omega)$, we obtain
\eqlab{\label{Izzet}
	\|u_k\|_{L^\infty(\Omega)}\leq\|\varphi_k\|_{L^\infty(\Omega_R\setminus\Omega)}\leq M
}
for every $k$.

By \eqref{unifest} and \eqref{Izzet}, we can apply Proposition \ref{equicoercs1} to obtain that---up to a subsequence that we relabel for simplicity---there exists a function $u\in BV_{loc}(\Omega)$ such that $u_k\to u$ in $L^1_{loc}(\Omega)$. Actually, by \eqref{Izzet} we have that $u\in L^\infty(\Omega)$ and $u_k\to u$ in $L^1(\Omega)$. Moreover \eqref{gammaliminfs1} and \eqref{unifest} ensure that
	\[
	\omega_{n-1}|Du|(\Omega)\leq\liminf_{k\to\infty}(1-s_k)\frac{1}{2}[u_k]_{W^{s_k,1}(\Omega)}<\infty,
	\]
	hence $u\in BV(\Omega)$.
	
	We proceed now to prove that $u$ is a function of least gradient in $\Omega$.
	In order to do this, notice that the convergence $u_k\to u$ in $L^1(\Omega)$ implies that there exists a set $\Sigma\subset\R$ such that $|\Sigma|=0$ and
	\[
	\chi_{\{u_k\geq\lambda\}}\xrightarrow{k\to\infty}\chi_{\{u\geq\lambda\}}\quad\mbox{in }L^1(\Omega),
	\]
	for every $\lambda\in\R\setminus\Sigma$. By \cite[Theorem 1.3]{bdlv20} we know that the level set $\{u_k\geq\lambda\}$ is $s_k$-minimal in $\Omega$ for every $\lambda\in\R$ and $k\in\N$. Thus \cite[Theorem 3]{gammaconv} ensures that the level set $\{u\geq\lambda\}$ is a local minimizer of $\Per(\,\cdot\,,\Omega)$ for every $\lambda\in\R\setminus\Sigma$---indeed we observe that the assumption $\chi_{E_i}\to\chi_E$ in $L^1_{loc}(\R^n)$ in \cite[Theorem 3]{gammaconv} is not really needed, since it suffices to assume the convergence in $L^1(\Omega)$ and to exploit the uniform global boundedness of the characteristic functions.
	
	Consider now $v\in BV(\Omega)$ such that $\mbox{spt}(u-v)\Subset\Omega$. Since $\mbox{spt}(\chi_{\{u\geq\lambda\}}-\chi_{\{v\geq\lambda\}})\Subset\Omega$, by local minimality we have that
	\[
	\Per(\{u\geq\lambda\},\Omega)\leq \Per(\{v\geq\lambda\},\Omega),
	\]
	for every $\lambda\in\R\setminus\Sigma$. Hence, by the coarea formula for the $BV$ seminorm,
	\bgs{
|Du|(\Omega)&=\int_\R\Per(\{u\geq\lambda\},\Omega)\,d\lambda	=\int_{\R\setminus\Sigma}\Per(\{u\geq\lambda\},\Omega)\,d\lambda\\
&
\leq \int_{\R\setminus\Sigma}\Per(\{v\geq\lambda\},\Omega)\,d\lambda=\int_\R\Per(\{v\geq\lambda\},\Omega)\,d\lambda=|Dv|(\Omega),
}
	proving that $u$ is a function of least gradient in $\Omega$.
	
	\medskip
	
	It remains to prove the convergence of the energies \eqref{senergy_conv_to1} under the hypothesis \eqref{tail_vanish_sto1}, for which we adapt the argument of the proof of \cite[Theorem 3]{gammaconv}.
	
	We consider the monotone set functions $\alpha_k(\Op):=(1-s_k)\frac{1}{2}[u_k]_{W^{s_k,1}(\Op)}$ for every open set $\Op\subset\Omega$, extended to
	\[
	\alpha_k(B):=\inf\left\{\alpha_k(\Op)\,|\,B\subset\Op\subset\Omega,\,\Op\mbox{ open}\right\},
	\]
	for every Borel set $B\subset\Omega$. Each $\alpha_k$ is a monotone, regular and super-additive set function in the sense of \cite[Section 5.2]{gammaconv}. By \eqref{unifest} and \cite[Theorem 21]{gammaconv}, up to extracting a subsequence that we relabel for simplicity, $\alpha_k$ weakly converges to a regular, monotone and super-additive set function $\alpha$, as $k\to\infty$.
	
	Let $\Op\Subset\Omega$ be an open set with Lipschitz boundary such that $\alpha(\partial\Op)=0$. We remark that there exists $r_0:=r_0(\Op)\in(0,\dist(\Op,\partial\Omega))$ small enough such that $\Op_r$ has Lipschitz boundary for every $r\in(-r_0,r_0)$.
	
Let us fix $\varrho\in(0,r_0/3)$ and consider a function $\psi\in C^\infty_c(\R^n)$ such that $0\leq\psi\leq1$, $\psi\equiv1$ in $\Op_{-2\varrho}$, $\psi\equiv0$ in $\Co\Op_{-\varrho}$, and $|\nabla\psi|\leq 2/\varrho$ (we recall the notation \eqref{negs}). We define the function $v_k:\R^n\to\R$ by setting $v_k:=\psi u+(1-\psi)u_k$. Since $v_k=u_k$ in $\Co\Op_{-\varrho}$, by the minimality of $u_k$ we have
	\[
	\En_{s_k}(u_k,\Op)\leq\En_{s_k}(v_k,\Op).
	\]
	
	We now estimate the energy of $v_k$, beginning with the contributions occurring inside of $\Op$. Arguing as in the proof of \cite[Proposition 11]{gammaconv}---see in particular formula (27) there---and taking into account the fact that $\|u\|_{L^\infty(\Omega)},\,\|u_k\|_{L^\infty(\Omega)}\leq M$, we obtain
	\bgs{
		\,[v_k]_{W^{s_k,1}(\Op)}&\leq[u]_{W^{s_k,1}(\Op)}+[u_k]_{W^{s_k,1}(\Op\setminus\overline{\Op_{-3\varrho}})}+C(\Op,\varrho)\frac{\|u-u_k\|_{L^1(\Op_{-\varrho}\setminus\Op_{-2\varrho})}}{1-s_k}\\
		&
		\qquad+C(\Op,\varrho)\|u-u_k\|_{L^1(\Op)}+\frac{M\,C(\Op)}{\varrho^{n+s_k}}.
	}
	Thus, by \cite[Theorem 1]{davila}, and since $u_k\to u$ in $L^1(\Omega)$, we have
	\eqlab{\label{WWallace}
\limsup_{k\to\infty}(1-s_k)\frac{1}{2}	[v_k]_{W^{s_k,1}(\Op)}
\leq\omega_{n-1}|Du|(\Op)+\limsup_{k\to\infty}\alpha_k(\Op\setminus\overline{\Op_{-3\varrho}}).
}
	On the other hand, we can write
	\bgs{
\int_\Op\int_{\Co\Op}&\frac{|v_k(x)-v_k(y)|}{|x-y|^{n+s_k}}\,dx\,dy\\
&
=\int_{\Op_{-\varrho}}\int_{\Co\Op}\frac{|v_k(x)-u_k(y)|}{|x-y|^{n+s_k}}\,dx\,dy+\int_{\Op\setminus\Op_{-\varrho}}\int_{\Co\Op}\frac{|u_k(x)-u_k(y)|}{|x-y|^{n+s_k}}\,dx\,dy\\&
=:I+II.
}
	Then, recalling \eqref{usefulestytail}, we can estimate
	\bgs{
	I&=\int_{\Op_{-\varrho}}\int_{\Co\Op}\frac{|v_k(x)-u_k(y)|}{|x-y|^{n+s_k}}\,dx\,dy\\
	&
	=\int_{\Op_{-\varrho}}\int_{\Omega_R\setminus\Op}\frac{|v_k(x)-u_k(y)|}{|x-y|^{n+s_k}}\,dx\,dy+\int_{\Op_{-\varrho}}\int_{\Co\Omega_R}\frac{|v_k(x)-u_k(y)|}{|x-y|^{n+s_k}}\,dx\,dy\\
	&
	\leq 2M\int_{\Op_{-\varrho}}\int_{\Omega_R\setminus\Op}\frac{dx\,dy}{|x-y|^{n+s_k}}
	+\int_{\Op_{-\varrho}}|v_k(x)|\left(\int_{\Co\Omega_R}\frac{dy}{|x-y|^{n+s_k}}\right)dx\\
	&\qquad\qquad
	+C(\Omega,R)^{n+s_k}\int_{\Op_{-\varrho}}\int_{\Co\Omega_R}\frac{|u_k(y)|}{(1+|y|)^{n+s_k}}\,dx\,dy\\
	&
	\leq 3M|\Op_{-\varrho}|\frac{\Ha^{n-1}(\partial B_1)}{s_k\varrho^{s_k}}
	+C(\Omega,R)^{n+1}|\Op_{-\varrho}|\int_{\Co\Omega_R}\frac{|\varphi_k(y)|}{(1+|y|)^{n+s_k}}\,dy.
}
As for the contribution $II$, arguing similarly we have
\bgs{
II&=
\int_{\Op\setminus\Op_{-\varrho}}\int_{\Co\Op}\frac{|u_k(x)-u_k(y)|}{|x-y|^{n+s_k}}\,dx\,dy\\
&
=\int_{\Op\setminus\Op_{-\varrho}}\int_{\Op_\varrho\setminus\Op}\frac{|u_k(x)-u_k(y)|}{|x-y|^{n+s_k}}\,dx\,dy
+\int_{\Op\setminus\Op_{-\varrho}}\int_{\Co\Op_\varrho}\frac{|u_k(x)-u_k(y)|}{|x-y|^{n+s_k}}\,dx\,dy\\
&
\leq[u_k]_{W^{s_k,1}(\Op_\varrho\setminus\overline{\Op_{-\varrho}})}
+2M\int_{\Op\setminus\Op_{-\varrho}}\int_{\Omega_R\setminus\Op_\varrho}\frac{dx\,dy}{|x-y|^{n+s_k}}+\int_{\Op\setminus\Op_{-\varrho}}|u_k(x)|\left(\int_{\Co\Omega_R}\frac{dy}{|x-y|^{n+s_k}}\right)dx\\
&\qquad\qquad+C(\Omega,R)^{n+s_k}\int_{\Op\setminus\Op_{-\varrho}}\int_{\Co\Omega_R}\frac{|u_k(y)|}{(1+|y|)^{n+s_k}}\,dx\,dy\\
&
\leq[u_k]_{W^{s_k,1}(\Op_\varrho\setminus\overline{\Op_{-\varrho}})}
+3M|\Op\setminus\Op_{-\varrho}|\frac{\Ha^{n-1}(\partial B_1)}{s_k\varrho^{s_k}}
+C(\Omega,R)^{n+1}|\Op\setminus\Op_{-\varrho}|\int_{\Co\Omega_R}\frac{|\varphi_k(y)|}{(1+|y|)^{n+s_k}}\,dy.
}
Therefore, exploiting \eqref{tail_vanish_sto1} we obtain
\eqlab{\label{RRoy}
\limsup_{k\to\infty}(1-s_k)\int_\Op\int_{\Co\Op}\frac{|v_k(x)-v_k(y)|}{|x-y|^{n+s_k}}\,dx\,dy
\leq 2\limsup_{k\to\infty}\alpha_k(\Op_\varrho\setminus\overline{\Op_{-\varrho}}).
}
Thus, by Proposition \ref{gammalinfsto1prop}
and the minimality of $u_k$, and exploiting \eqref{WWallace} and \eqref{RRoy}, we conclude that
\eqlab{\label{last_eq}
\omega_{n-1}|Du|(\Op)&\leq\limsup_{k\to\infty}(1-s_k)\En_{s_k}(u_k,\Op)\leq\limsup_{k\to\infty}(1-s_k)\En_{s_k}(v_k,\Op)\\
&
\leq\omega_{n-1}|Du|(\Op)+3\limsup_{k\to\infty}\alpha_k(\Op_\varrho\setminus\overline{\Op_{-3\varrho}}),
}
for every $\varrho\in(0,r_0/3)$.
Since $\alpha(\partial\Op)=0$, by \cite[Proposition 22]{gammaconv} we have that
\[
\lim_{\varrho\searrow0}\limsup_{k\to\infty}\alpha_k(\Op_\varrho\setminus\overline{\Op_{-3\varrho}})=0.
\]
In light of~\eqref{last_eq}, this proves \eqref{senergy_conv_to1} in every open set $\Op\Subset\Omega$ with Lipschitz boundary, such that~$\alpha(\partial\Op)=0$.

Hence, in order to conclude the proof of
Theorem~\ref{conv_min_sto1}, we are left to show that $|Du|(\partial\Op)=0$ implies that~$\alpha(\partial\Op)=0$.
For this, we first observe that, by definition of weak convergence of
monotone set functions, we have in particular that
\[
\alpha(\Omega')\leq\liminf_{k\to\infty}\alpha_k(\Omega'),
\]
for every open set $\Omega'\subset\Omega$. Therefore, if $\Omega'\Subset\Omega$ is an open set with Lipschitz boundary, such that~$\alpha(\partial\Omega')=0$,
by \eqref{senergy_conv_to1} we obtain that
\eqlab{\label{Is_this_finally_the_end}
\alpha(\Omega')\leq\liminf_{k\to\infty}\alpha_k(\Omega')\leq\limsup_{k\to\infty}(1-s_k)\En_{s_k}(u_k,\Omega')=\omega_{n-1}|Du|(\Omega').
}
Let us now consider an open set $\Op\Subset\Omega$ with Lipschitz boundary, such that $|Du|(\partial\Op)=0$. We point out that, since $\alpha$ is locally finite in $\Omega$, as a consequence of the super-additivity and
the monotonicity of $\alpha$, the set
\[
\Sigma_\alpha(\partial\Op):=\left\{\delta\in(0,r_0)\,|\,\alpha\big(\partial( \Op_\delta\setminus\overline{\Op_{-\delta}})\big)>0\right\}
\]
is at most countable. Therefore, we can find $\delta_h\searrow0$ such that $\Op_{\delta_h}\setminus\overline{\Op_{-\delta_h}}$ is a bounded open set with Lipschitz boundary, with $\alpha\big(\partial(\Op_{\delta_h}\setminus\overline{\Op_{-\delta_h}})\big)=0$ for every $h$. By the monotonicity of the set function $\alpha$, exploiting \eqref{Is_this_finally_the_end} and the fact that $|Du|\llcorner\Omega$ is a Radon measure,
we obtain that
\bgs{
\alpha(\partial\Op)\leq\limsup_{h\to\infty}\alpha\big(\Op_{\delta_h}\setminus\overline{\Op_{-\delta_h}}\big)\leq\limsup_{h\to\infty}\omega_{n-1}|Du|\big(\Op_{\delta_h}\setminus\overline{\Op_{-\delta_h}}\big)=\omega_{n-1}|Du|(\partial\Op)=0.
}
This concludes the proof of Theorem~\ref{conv_min_sto1}.
\end{proof}

We stress that in order to obtain the convergence of the energies in~\eqref{senergy_conv_to1} it is necessary to make the stronger assumption \eqref{tail_vanish_sto1} in place of \eqref{tail_bound_sto1}, as shown by the following example.

\begin{example}
	Let $R=R(n,\Omega)$ be as in Theorem \ref{conv_min_sto1} and
	consider the function $\varphi_s:\R^n\to\R$ defined by
	$\varphi_s:=\frac{1}{1-s}\chi_{\Co\Omega_R}$.
	By \eqref{Izzet}, since here $\varphi_s=0$ in $\Omega_R\setminus \Omega$, we have that~$\varphi_s$ is the unique minimizer in $\W^{s,1}_{\varphi_s}(\Omega)$ for every $s\in[1/2,1)$. Clearly, as $s\nearrow1$ the functions $\varphi_s$ converge in $L^1(\Omega)$ to the function $u\equiv0$, which is of least gradient in $\Omega$.
	We observe that $\varphi_s$ satisfies \eqref{tail_bound_sto1},
	as indeed
	\bgs{
	(1-s)\int_{\Co\Omega_R}\frac{|\varphi_s(y)|}{(1+|y|)^{n+s}}\,dy=\int_{\Co\Omega_R}\frac{dy}{(1+|y|)^{n+s}}
	\leq \int_{\Co\Omega_R}\frac{dy}{(1+|y|)^{n+\frac{1}{2}}}<\infty,
}
	for every $s\in[1/2,1)$. On the other hand,
	given any $\Op\Subset\Omega$ we have that
	\bgs{
\liminf_{s\to1}(1-s)\En_s(\varphi_s,\Op)=\liminf_{s\to1}\int_\Op\int_{\Co\Omega_R}\frac{dx\,dy}{|x-y|^{n+s}}\geq c>0,
}
	proving that the convergence of the energies \eqref{senergy_conv_to1} can not hold true, since $|Du|(\Op)=0$.
\end{example}

For the sake of completeness, we provide also a proof of the $\Gamma$-limsup inequality, in the case in which $\partial\Omega$ is of class $C^2$.

\begin{theorem}[$\Gamma$-limsup inequality]\label{qwertyuilhkjghfgdcv}
	Let $\Omega\subset\R^n$ be a bounded open set with $C^2$ boundary, $u\in L^1_{loc}(\R^n)$ such that $u|_\Omega\in BV(\Omega)$ and let $s_k\nearrow1$. Then, there exists a sequence $u_k\in\W^{s_k,1}(\Omega)$ such that $u_k\to u$ in $L^1_{loc}(\R^n)$ and
	\[
	\lim_{k\to\infty}(1-s_k)\En_{s_k}(u_k,\Omega)=\omega_{n-1}|Du|(\Omega).
	\]
\end{theorem}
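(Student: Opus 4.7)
I would construct a recovery sequence by (i) extending $u|_\Omega$ across $\partial\Omega$ via $C^2$ reflection so as to kill any boundary jump, (ii) interpolating this extension with the ambient function $u$ on an outward collar of width $\delta_k\searrow 0$, and (iii) truncating outside a ball of radius $R_k\nearrow\infty$, finishing with a diagonal argument that combines D\'avila's pointwise formula with elementary tail estimates.

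Concretely, I first exploit $\partial\Omega\in C^2$: in a tubular neighborhood $T_\eta$ the signed distance $d_\Omega$ is $C^2$ and the reflection $\sigma(x):=x-2\,d_\Omega(x)\,\nabla d_\Omega(x)$ is a $C^1$ involution exchanging the two sides of $\partial\Omega$. Setting
\[
\tilde u(x):=\begin{cases} u(x), & x\in\Omega,\\ u(\sigma(x)), & x\in T_\eta\setminus\overline\Omega,\\ 0, & x\in\R^n\setminus(\Omega\cup T_\eta),\end{cases}
\]
the change-of-variables for $BV$ under $C^1$ diffeomorphisms gives $\tilde u\in BV(\Omega\cup T_\eta)$ with $\tilde u|_\Omega=u|_\Omega$; moreover, since the interior trace of $u$ and the reflected exterior trace of $\tilde u$ agree on $\partial\Omega$, one has $|D\tilde u|(\partial\Omega)=0$. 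Then, choosing $\delta_k\searrow 0$ and $R_k\nearrow\infty$ to be fixed diagonally, I set
\[
u_k:=\tilde u\,\chi_{\Omega_{\delta_k}}+u\,\chi_{B_{R_k}\setminus\Omega_{\delta_k}}.
\]
Since $u_k|_\Omega=u|_\Omega\in BV(\Omega)\hookrightarrow W^{s_k,1}(\Omega)$, we have $u_k\in\W^{s_k,1}(\Omega)$; and the set $\{u_k\neq u\}\subset(\Omega_{\delta_k}\setminus\Omega)\cup(\R^n\setminus B_{R_k})$ shrinks to measure zero on every compact, so $u_k\to u$ in $L^1_{\mathrm{loc}}(\R^n)$.

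Next I compute $(1-s_k)\En_{s_k}(u_k,\Omega)$ by splitting it into the interior seminorm, which equals $(1-s_k)\tfrac12[\tilde u]_{W^{s_k,1}(\Omega)}$ and, by \cite{davila} applied to $\tilde u\in BV(\Omega)$, converges to $\omega_{n-1}|D\tilde u|(\Omega)=\omega_{n-1}|Du|(\Omega)$, plus the interaction with $\Co\Omega$, which breaks into three pieces according to whether $y\in\Omega_{\delta_k}\setminus\Omega$, $y\in B_{R_k}\setminus\Omega_{\delta_k}$, or $y\in\Co B_{R_k}$. The first piece is controlled by $(1-s_k)[\tilde u]_{W^{s_k,1}(N_{\eta_0}(\partial\Omega))}$ for any fixed $\eta_0>0$, whose limsup is $2\omega_{n-1}|D\tilde u|(N_{\eta_0}(\partial\Omega))\to 0$ as $\eta_0\searrow 0$ thanks to $|D\tilde u|(\partial\Omega)=0$; the second piece is bounded by $C\,(1-s_k)\,\delta_k^{-n-s_k}\bigl(\|\tilde u\|_{L^1(\Omega)}+\|u\|_{L^1(B_{R_k})}\bigr)$ using $|x-y|\geq\delta_k$; the third by $C\,(1-s_k)\,R_k^{-s_k}\|\tilde u\|_{L^1(\Omega)}$ using $|x-y|\geq R_k-\diam(\Omega)$. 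Choosing $R_k$ to grow slowly enough that $(1-s_k)\delta_k^{-n-s_k}\|u\|_{L^1(B_{R_k})}\to 0$ (always possible since $u\in L^1_{\mathrm{loc}}(\R^n)$) makes all error terms vanish, yielding $(1-s_k)\En_{s_k}(u_k,\Omega)\to\omega_{n-1}|Du|(\Omega)$.

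\textbf{Main obstacle.} The crux is the extension step: producing a $BV$ extension whose variation measure puts no mass on $\partial\Omega$. The $C^2$ hypothesis is essential here, since one needs the reflection map to be a $C^1$ diffeomorphism so that the pushforward of $|Du|\llcorner\Omega$ under $\sigma$ is a well-defined Radon measure representing the variation of $\tilde u$ in $T_\eta\setminus\overline\Omega$, and the automatic trace-matching on $\partial\Omega$ then rules out any $(n-1)$-dimensional singular part of $D\tilde u$ on $\partial\Omega$. Once this extension is in hand, the remaining ingredients---D\'avila's pointwise formula, elementary tail estimates, and the diagonal choice of $(\delta_k,R_k)$---are standard.
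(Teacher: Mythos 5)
Your proposal is correct, and it takes a genuinely different route from the paper's proof. The paper extends $u$ across $\partial\Omega$ by first approximating $u$ with a function $v\in C^\infty(\Omega)\cap BV(\Omega)$ and then propagating $v$ \emph{constantly along normal rays} via the projection $\pi_\delta$ onto the level set $\partial\Omega_{-\delta}$, choosing good levels $\delta_h$ through the coarea formula for the signed distance; it truncates $u$ in \emph{height} at $\pm1/\delta_h$ outside $\Omega_{\delta_h}$, and then simply invokes Theorem~\ref{pwise_th} with $\Omega_{\delta_\ell}$ in place of $\Omega$ followed by a diagonal argument. You instead extend $u$ by \emph{reflection} $u\circ\sigma$ across $\partial\Omega$, using the $BV$ gluing/trace theorem to conclude $|D\tilde u|(\partial\Omega)=0$ directly without any prior smoothing, truncate in \emph{radius} at $R_k$, and compute the limit directly from D\'avila's theorem together with explicit tail estimates. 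Your route is slightly more economical---no intermediate smooth $v$, no coarea argument for the distance function, no need to reprove the pointwise theorem on shrinking neighborhoods---at the price of leaning on the $BV$ trace-matching theorem across a $C^1$ hypersurface, which the paper avoids by reducing to a $C^1$ chain rule for the smooth $v\circ\pi_\delta$. Two small points to tidy up in your write-up: in estimating the first interaction piece you must also account for the region $x\in\Omega_{-\eta_0}$, where $|x-y|\geq\eta_0/2$ gives a term of order $(1-s_k)\,C(\eta_0)\|\tilde u\|_{L^1}\to 0$ for each fixed $\eta_0$, before sending $\eta_0\searrow 0$; and the ``limsup is $2\omega_{n-1}|D\tilde u|(N_{\eta_0}(\partial\Omega))$'' is actually a limit by D\'avila applied to the $C^2$ (hence Lipschitz) annulus $N_{\eta_0}(\partial\Omega)$. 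Neither affects the validity of the argument.
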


\begin{proof}
	The main difficulty of the proof resides in properly approximating $u$ around $\partial\Omega$. In order to do this, we exploit the signed distance function $\bar{d}_\Omega$, which is defined as
	\[
	\bar{d}_\Omega(x):=\dist(x,\Omega)-\dist(x,\Co\Omega),
	\]
	for every $x\in\R^n$. For the properties of the signed
	distance function that we employ here, we refer
	to \cite[Appendix B.1]{tesilu} and the references cited therein.
	Since $\Omega$ is bounded and has $C^2$ boundary,
	there exists $r_0(\Omega)>0$ such that
	$\bar{d}_\Omega\in C^2(N_{2r_0}(\partial\Omega))$,
	where we use the notation introduced in~\eqref{NOADD}.
	For any $\delta\in(0,r_0)$ we consider the
	projection $\pi_\delta:\Omega_\delta\setminus\Omega_{-\delta}
	\twoheadrightarrow\partial\Omega_{-\delta}$ defined by
	\[
	\pi_\delta(x):=x-(\delta+\bar{d}_\Omega(x))\nabla\bar{d}_\Omega(x),
	\]
	and we observe that $\pi_\delta\in C^1(\Omega_\delta\setminus\Omega_{-\delta},\R^n)$---see, e.g., \cite[Proposition B.1.6]{tesilu} for related computations---with
	\[
	D\pi_\delta(x)=\textrm{Id}_n-(\delta+\bar{d}_\Omega(x))D^2\bar{d}_\Omega(x)-\nabla\bar{d}_\Omega(x)\otimes\nabla\bar{d}_\Omega(x).
	\]
	In particular,
	\eqlab{\label{eqn1}
	|D\pi_\delta(x)|\leq C_1(\Omega)\quad\mbox{for every }x\in\Omega_\delta\setminus\Omega_{-\delta},
}
	for some constant $C_1(\Omega)>0$ that does not depend on $\delta\in(0,r_0)$.
	For every $\varrho\in(0,\delta)$ we consider also the function $\Phi_\varrho:\partial\Omega_{-\delta}\to\partial\Omega_{-\varrho}$ defined by
	\[
	\Phi_\varrho(x):=x+(\delta-\varrho)\nabla\bar{d}_\Omega(x),
	\]
	which is a bijection of class $C^1$. We observe that
	\eqlab{\label{eqn6}
	\pi_\delta(\Phi_{\varrho}(x))&=\Phi_\varrho(x)-(\delta+\bar{d}_\Omega(\Phi_\varrho(x)))\nabla\bar{d}_\Omega(\Phi_\varrho(x))\\
	&
	=x+(\delta-\varrho)\nabla\bar{d}_\Omega(x)-(\delta-\varrho)\nabla\bar{d}_\Omega(x+(\delta-\varrho)\nabla\bar{d}_\Omega(x))\\
	&
	=x+(\delta-\varrho)\nabla\bar{d}_\Omega(x)
	-(\delta-\varrho)\nabla\bar{d}_\Omega(x)
	\\&=x,
}
	for every $x\in\partial\Omega_{-\delta}$, and
	\eqlab{\label{eqn2}
	\sup_{x\in\partial\Omega_{-\delta}}|D\Phi_\varrho(x)|\leq C_2(\Omega),
}
	for some constant $C_2(\Omega)>0$
	that is independent of $\delta\in(0,r_0)$ and $\varrho\in(0,\delta)$.
	
	With these preliminaries at hand, we recall that, since $u\in BV(\Omega)$, given $\eps>0$ there exists $v\in C^\infty(\Omega)\cap BV(\Omega)$ such that
	\[
	\|u-v\|_{L^1(\Omega)}+\big||Du|(\Omega)-|Dv|(\Omega)\big|<\eps.
	\]
	Notice that we can find $\delta_0\in(0,r_0)$ small enough such that
	\[
	\int_{\Omega\setminus\Omega_{-\delta_0}}|v(x)|\,dx+|Dv|(\Omega\setminus\Omega_{-\delta_0})=\int_{\Omega\setminus\Omega_{-\delta_0}}\big(|u(x)|+|\nabla v(x)|\big)\,dx<\eps.
	\]
	Now, by using the coarea formula for $\bar{d}_\Omega$ we can write
		\[
	\int_{\Omega\setminus\Omega_{-\delta_0}}\big(|u(x)|+|\nabla v(x)|\big)\,dx
	=\int_{-\delta_0}^0\left(\int_{\{\bar{d}_\Omega=r\}}\big(|u(x)|+|\nabla v(x)|\big)\,d\Ha^{n-1}_x\right)dr.
	\]
	This implies that we can find a sequence $\{\delta_h\}_h\subset(0,\delta_0)$ such that $\delta_h\searrow0$ and
	\eqlab{\label{eqn3}
	\int_{\{\bar{d}_\Omega=-\delta_h\}}\big(|u(x)|+|\nabla v(x)|\big)\,d\Ha^{n-1}_x<\frac{\eps}{\delta_h},
}
	for every $h\in\N$. Now, for any~$h\in\N$ we define the function $v_h:\R^n\to\R$ by setting
	\[
	v_h:=v\chi_{\Omega_{-\delta_h}}+(v\circ\pi_{\delta_h})\chi_{\Omega_{\delta_h}\setminus\Omega_{-\delta_h}}
	+\min\{1/\delta_h,\max\{-1/\delta_h,u\}\}\chi_{\Co\Omega_{\delta_h}}.
	\]
	We observe that $v_h\in C^0(\Omega_{\delta_h})\cap BV(\Omega_{\delta_h})\cap C^1(\Omega_{\delta_h}\setminus\partial\Omega_{-\delta_h})$, hence
	\[
	|Dv_h|(\partial\Omega_{-\delta_h})=0,\qquad|Dv_h|(\partial\Omega)=0,
	\]
	and
	\bgs{
|Dv_h|(\Omega)&=\int_{\Omega_{-\delta_h}}|\nabla v_h(x)|\,dx+\int_{\Omega\setminus\overline{\Omega_{-\delta_h}}}|\nabla v_h(x)|\,dx\\
&
=\int_{\Omega_{-\delta_h}}|\nabla v(x)|\,dx+\int_{\Omega\setminus\overline{\Omega_{-\delta_h}}}|D\pi_{\delta_h}(x)\cdot(\nabla v)(\pi_{\delta_h}(x))|\,dx.
}
By using the coarea formula for $\bar{d}_\Omega$, exploiting \eqref{eqn1}, \eqref{eqn6}, \eqref{eqn2}, and \eqref{eqn3}, we can estimate
\bgs{
\int_{\Omega\setminus\overline{\Omega_{-\delta_h}}}|D\pi_{\delta_h}(x)&\cdot(\nabla v)(\pi_{\delta_h}(x))|\,dx
\leq C_1(\Omega)\int_{\Omega\setminus\overline{\Omega_{-\delta_h}}}|(\nabla v)(\pi_{\delta_h}(x))|\,dx\\
&
=C_1(\Omega)\int_{-\delta_h}^0\left(\int_{\partial\Omega_{\varrho}}|(\nabla v)(\pi_{\delta_h}(x))|\,d\Ha^{n-1}_x\right)d\varrho\\
&
=C_1(\Omega)\int_{-\delta_h}^0\left(\int_{\Phi_{-\varrho}(\partial\Omega_{-\delta_h})}|(\nabla v)(\pi_{\delta_h}(x))|\,d\Ha^{n-1}_x\right)d\varrho\\
&
\leq C(\Omega)\int_{-\delta_h}^0\left(\int_{\partial\Omega_{-\delta_h}}|\nabla v(y)|\,d\Ha^{n-1}_y\right)d\varrho\\
&
\leq C(\Omega)\frac{\delta_h\eps}{\delta_h}=C(\Omega)\eps.
}
Moreover, we observe that
\[
\left|\int_{\Omega_{-\delta_h}}|\nabla v(x)|\,dx-|Dv|(\Omega)\right|=\int_{\Omega\setminus\Omega_{-\delta_h}}|\nabla v(x)|\,dx\leq\int_{\Omega\setminus\Omega_{-\delta_0}}|\nabla v(x)|\,dx<\eps,
\]
for every $h$.
Therefore, we obtain that
\bgs{
\big||Dv_h|(\Omega)-|Du|(\Omega)\big|\leq\big||Dv_h|(\Omega)-|Dv|(\Omega)\big|+\big||Dv|(\Omega)-|Du|(\Omega)\big|<\eps+C(\Omega)\eps+\eps,
}
for every $h$.

A similar argument yields that
\bgs{
\|u-v_h\|_{L^1(\Omega_{\delta_h})}<(2+C(\Omega))\eps,
}
for every $h$. On the other hand, we remark that, for every fixed
$R>0$ such that $\Omega\Subset B_R$ we have,
by Lebesgue's Dominated Convergence Theorem, that
\bgs{
\lim_{h\to\infty}\int_{B_R\setminus\Omega_{\delta_h}}|v_h-u|\,dx=0.
}

	By considering a sequence $\eps_\ell\searrow0$ in the above computations, we have just proved that we can find a sequence $\delta_\ell\searrow0$ and a sequence of functions $w_\ell:\R^n\to\R$ such that $w_\ell\in BV(\Omega_{\delta_\ell})$, with $|Dw_\ell|(\partial\Omega)=0$, and $w_\ell\in L^\infty(\R^n)$, such that
	\[
	w_\ell\to u\quad\mbox{in }L^1_{loc}(\R^n)\qquad\mbox{ and }\qquad
	\lim_{\ell\to\infty}|Dw_\ell|(\Omega)=|Du|(\Omega).
	\]
	For any such function $w_\ell$ we can apply Theorem \ref{pwise_th}, with $\Omega_{\delta_\ell}$ in place of $\Omega$ and $\Omega$ in place of $\Op$, to obtain
	that
	\[
	\lim_{k\to\infty}(1-s_k)\En_{s_k}(w_\ell,\Omega)=\omega_{n-1}|Dw_\ell|(\Omega).
	\]
	The conclusion of Theorem~\ref{qwertyuilhkjghfgdcv}
	then follows by a standard diagonal argument---similar to the one employed in the proofs of
	Theorems~\ref{gammy} and~\ref{fixeddata}.
\end{proof}

\bibliography{biblio}
\bibliographystyle{plain}

\end{document}